\pgfplotsset{compat=newest}
\newtheorem{theorem}{Theorem}[section]
\newtheorem{corollary}[theorem]{Corollary}
\newtheorem{lemma}[theorem]{Lemma}
\newtheorem{proposition}[theorem]{Proposition}
\theoremstyle{definition}
\newtheorem{definition}[theorem]{Definition}
\newtheorem{notation}[theorem]{Notation}
\newtheorem{remark}[theorem]{Remark}
\newtheorem{example}[theorem]{Example}
\def\subsection{\def\@secnumfont{\bfseries}\@startsection{subsection}{1}%
  {\parindent}{.5\linespacing\@plus.7\linespacing}{-.5em}%
  {\normalfont\bfseries}}
\def\pts#1{\hfill({\em #1 point\ifnum #1>1 s\fi})\ifnum#1>9\else\phantom{\em 1{\ifnum#1>1\else s\fi}}\fi}
\newcommand{\B}{\beta}
\newcommand{\p}{\partial}
\newcommand{\K}{\kappa}
\newcommand{\hd}{\hat{d}}
\newcommand{\hX}{\widehat{X}}
\begin{document}

\newcommand{\ob}{\mathfrak{o}}
\newcommand{\TitleString}{CHARACTERIZATION OF SUBLINEARLY MORSE GEODESICS}

\thispagestyle{empty}

 \begin{center}
     \textbf{CURTAIN CHARACTERIZATION OF SUBLINEARLY MORSE GEODESICS IN CAT(0) SPACES}
 \end{center}
 
 \begin{center}
     Elliott Vest
 \end{center}

\vspace{.5cm}
 \begin{abstract}
 We show that the sublinear Morse boundary of every CAT(0) space continuously injects into the Gromov
boundary of a hyperbolic space, which was not previously known even for all CAT(0) cube complexes.
Our work utilizes the curtain machinery introduced by Petyt-Spriano-Zalloum. Curtains are more general
combinatorial analogues of hyperplanes in cube complexes, and we develop multiple curtain characterizations of the sublinear Morse property along the way. Our results answer multiple questions of Petyt-Spriano-Zalloum.

 
 \end{abstract}
 
 \section{Introduction}

The \textit{sublinearly Morse boundary}, denoted $\p_\K X$, was introduced by Qing-Rafi-Tiozzo in \cite{QRT19,QRT20} as an extension of the Morse boundary \cite{Cor16}. It remains a quasi-isometry invariant while also capturing the asymptotic behavior of random walks on finitely generated groups. A motivating application they use is showing that, for each finite-type surface $S$ and some $p \in \mathbb{N}$ depending on $S$, the $\log^p$-Morse boundary for the mapping class group of $S$ serves as a topological model for the Poisson boundary for its random walks. This boundary has now subsequently been studied in the CAT(0) cube complex setting \cite{MQZ20,IMZ21}, and more generally the hierarchically hyperbolic setting \cite{DZ22, NQ22}.

When in regards to CAT(0) spaces, authors Petyt-Spriano-Zalloum in \cite{PSZ22} introduce a combinatorial tool named a \textit{curtain} that serves as an analogue to a hyperplane for a CAT(0) cube complex. Building off of ``hyperplane-separation" metrics introduced by Genevois \cite{Gen20b}, the authors utilize curtains in a CAT(0) space $X$ to build the \textit{curtain model} - a hyperbolic space whose isometry group contains $\text{Isom}\hspace{.1cm} X$. Denoting $\widehat{X}$ as the curtain model and $\p\widehat X$ as its Gromov boundary, we summarize our main results. \vspace{.25cm}

\noindent \textbf{Main Results:} \textit{Let $X$ be a proper CAT(0) space and $\K$ be a sublinear function. We show the following, }

\begin{enumerate}

    \item \textit{If $\K^4$ is sublinear and $\p_{\K} X$ is endowed with the sublinearly Morse topology, then $\p_{\K} X$ continuously injects into the Gromov boundary $\p \widehat{X}$. Endowed with the cone topology, $\p_{\K} X$ topologically embeds into $\p \widehat{X}$. (Theorem \hyperlink{TheoremA}{A}.)}
    \item \textit{Any $\K$-Morse ray can be characterized by a dual chain of $\K$-separated curtains crossing the ray at a sublinear rate. (Theorem \hyperlink{TheoremC}{C}.)}
    \item \textit{Any $\K$-Morse ray can be characterized by the $\K$-persistence of its projection into $\widehat{X}$. (Theorem \hyperlink{TheoremD}{D}.)}
    
\end{enumerate}

Theorem \hyperlink{TheoremA}{A} upgrades a similar theorem by Petyt-Spriano-Zallouom in \cite{PSZ22} from the Morse setting. Since sublinearly Morse rays have been shown to be unparametrized quasi-geodesics in the hyperbolic space $\widehat{X}$, a key ingredient we prove is that these rays are also unbounded in $\widehat{X}$ and, thus, define a point in $\p \widehat{X}$. Moreover, as random walks sublinearly track $\K$-Morse rays in CAT(0) spaces \cite{Choi}, Theorem \hyperlink{TheoremA}{A} immediately concludes that the Gromov boundary of the curtain model for a CAT(0) group is a model for its Poisson boundary (Corollary \hyperlink{CorollaryB}{B}).  The central characterization used to give Theorem \hyperlink{TheoremA}{A} is Theorem \hyperlink{TheoremC}{C} - a CAT(0) analogue of a result in \cite{MQZ20} in the cube complex setting that allows directions of $\K$-Morse rays to be described in a combinatorial fashion. Lastly, a version of Theorem \hyperlink{TheoremD}{D} was proven for hierarchically hyperbolic spaces in \cite{DZ22}, including mapping class groups of finite-type surfaces.  In particular, they proved that $\kappa$-Morse rays in the mapping class group make persistent and fast (compared to $\kappa$) progress in the curve graph of the whole surface. Hence, Theorem \hyperlink{TheoremD}{D} further strengthens observations that the curtain model for a CAT(0) space gives a similar role of the curve graph for a mapping class group of a finite-type surface \cite{Zal23}. Further motivation, details, and the full statements of our theorems will be given in the rest of the introduction.

\vspace{.25cm}

\noindent 1.1. \textbf{Hyperbolic-like boundaries and cube complexes summary.} In recent years, there have been numerous advancements in investigating the boundaries of spaces, either to gain insights into the spaces themselves or to understand groups that act geometrically on these spaces. A foundational result is due to Gromov \cite{Gro87}, who demonstrated that quasi-isometries between hyperbolic spaces induce homeomorphisms on their visual boundaries, leading to a well-defined notion of a boundary of a hyperbolic group. However, Croke and Kleiner showed that this result does not hold for CAT(0) spaces, as they discovered two quasi-isometric CAT(0) spaces that lack homeomorphic visual boundaries \cite{CK00}. To address this problem in the CAT(0) setting, Charney and Sultan introduced the contracting boundary, a quasi-isometry invariant restriction of the visual boundary that only looks at the ``hyperbolic" directions of a space \cite{CS15}. Subsequently, Cordes extended this result to any proper geodesic space by defining Morse geodesics and the Morse boundary \cite{Cor16}. It is worth noting that the contracting and Morse conditions are equivalent in the CAT(0) setting, and if the underlying space is hyperbolic, then all mentioned boundaries are equivalent.

The \textit{sublinear Morse boundary}, denoted $\p_\K X$ where $\K$ is a sublinear function, was introduced in \cite{QRT19,QRT20} with the motivation to preserve a notion of a hyperbolic-like boundary that is quasi-isometry invariant while also capturing generic directions of the space in question. More specifically, random walks can sublinearly track the sublinearly Morse directions of a group $G$ to serve as a topological model for the Poisson boundary when $G$ is a right angled Artin group \cite{QRT19} or a mapping class group of a finite-type surface \cite{QRT20}. This result has also been shown to hold for rank-1 CAT(0) spaces and Teichm\"{u}ller spaces of finite-type surfaces \cite{GQR22}, then later with CAT(0) admissible groups with mild assumptions \cite{NQ22}.

Apart from the aforementioned developments, CAT(0) cube complexes have also been of particular interest due to their combinatorial nature \cite{Sag97}. When one \textit{cubulates} a group, i.e. shows that the group acts geometrically on a CAT(0) cube complex, one can import the various combinatorial information of the cube complex to the group. Notable applications of this strategy are shown in the resolutions of the virtual Haken and fibering conjectures in 3-manifold theory \cite{Ago13,Wis21}. There are many examples of interesting groups that can be cubulated such as right-angled Artin groups \cite{CD95}, Coxeter groups \cite{NR03}, small cancellation groups \cite{Wis04}, hyperbolic 3-manifold groups \cite{KM12}, and others. Furthermore, CAT(0) cube complexes have become a useful tool for studying the geometry of mapping class groups of finite-type surfaces, and hierarchically hyperbolic spaces more generally \cite{BHS21, DMS20, HHP22, DZ22}. 

\vspace{.25cm}

\noindent 1.2. \textbf{The curtain model and a continuous injection.} Recently, Petyt-Spriano-Zalloum introduced an analogue for a hyperplane in any CAT(0) space \cite{PSZ22}. Hyperplanes are the basic combinatorial objects in a cube complex, and their general analogues, \textit{curtains}, have a simple definition: each is the preimage of a unit length interval of a geodesic under closest point projection (Definition \ref{curtain}). Like hyperplanes, curtains separate the ambient space into two components, and Petyt-Spriano-Zalloum prove that an analogous notion of separation allows one to build a hyperbolic space, called \textit{the curtain model} (Definition \ref{Curtain Model}), which encodes much of the ambient hyperbolic geometry of the space. Such a space serves as an analogue of curve graphs for mapping class groups \cite{Zal23}, as the curve graph of a finite-type surface also encodes the hyperbolic geometry of its associated mapping class group \cite{MM99}. In addition, ongoing work of Le Bars shows the utility curtains can have when investigating asymptotic behavior of random walks in CAT(0) spaces \cite{LeB1, LeB2}. This segues to our main result, which deals in the extension of the projection of a CAT(0) space $X \longrightarrow \widehat{X}$ to a continuous injection of its sublinearly Morse boundary\vspace{.25cm}\hypertarget{TheoremA}{.}  
 \\ 
\noindent \textbf{Theorem A.} \textit{Let $X$ be a proper CAT(0) space, $\widehat{X}$ its curtain model, and $\K$ be a sublinear function such that $\K^4$ is sublinear. Endow $\p_\K X$ with the sublinear Morse topology and denote $\p \widehat{X}$ as the Gromov boundary of $\widehat{X}$ . Then the projection map $ X \rightarrow \widehat{X}$ extends to an $\text{Isom}\hspace{.1cm} X$-equivariant continuous injection $\varphi: \p_\K X \lhook\joinrel\longrightarrow \p \widehat{X}$. Moreover, endowing $\p_\K X$ with the subspace topology of the cone topology makes $\varphi$ a homeomorphism onto its image.} \vspace{.25cm}

In other words, the curtain model of a CAT(0) space will capture all the generic directions of the CAT(0) space. We note that Theorem \hyperlink{TheoremA}{A} relies on the condition that $\K$ also have powers that are sublinear. Given that sublinear Morse boundaries are commonly employed in applications where random walks converge to $\p_\K X$ for $\K(t) = \log^p(t)$ \cite{NQ22, QRT20}, the cost of our restriction on $\K$ is justifiable for cleaner arguments. In addition, the assumption that the CAT(0) space be proper is only used for the characterization that $\K$-Morse rays are $\K$-contracting \cite{QRT19}. If one worked with the sublinearly contracting boundary instead of $\partial_\K X$, our work could yield the same result without the assumption that the CAT(0) space be proper.

In the recent paper \cite{CFFT22}, the authors show if a group $G$ acts on a hyperbolic space $\widehat X$ with a WPD element and $\mu$ is a probability measure with finite entropy, then $\partial \widehat X$ with the hitting measure is a model for the Poisson boundary of $(G,\mu)$. In the context of CAT(0) spaces, work of Choi in \cite{Choi, choi22} has shown that  random walks with finite $p$th moment will sublinearly track $o(n^{1/p})$-Morse geodesics. Moreover, \cite{QRT20} shows that whenever random walks sublinearly track $\K$-Morse rays, $\partial_\K X$ with its hitting measure is a model for the Poisson boundary of $(G,\mu)$. Thus, the injection created in Theorem \hyperlink{TheoremA}{A} immediately gives the following corollary. This corollary  recovers the aforementioned result in \cite{CFFT22}, but with a stronger assumption on the probability measure $\mu$\hypertarget{CorollaryB}{.}  \vspace{.25cm}

\noindent \textbf{Corollary B.} \textit{Let $G$ be a group that acts properly on a proper CAT(0) space $X$, denote $\widehat X$ as its curtain model, and let $\mu$ be a non-elementary probability measure on G with finite 5th moment. Let $\nu$ be the hitting measure on the Gromov boundary $\p \widehat X$. Then $(\p \widehat{X}, \nu)$ is a model for the Poisson boundary of $(G,\mu)$}.\vspace{.25cm}

Endowing $\p_\K X$ with the sublinearly Morse topology is nice because it makes $\p_\K X$ metrizable and a quasi-isometry invariant \cite{QRT19}. However, our proof of continuity only uses open sets in the \textit{cone topology} which is strictly coarser than the sublinearly Morse topology \cite{IMZ21}. We use techniques of \cite{AM22} to show that $\varphi$ can be a homeomorphism on its image when $\p_\K X$ is endowed with the cone topology (this is Theorem \ref{homeo} in our paper).

 Theorem \hyperlink{TheoremA}{A} has been shown in the CAT(0) cube complex setting \cite{IMZ21}, where the authors project to a hyperbolic space inspired by Genevois's work \cite{Gen20b}. However, their proof relied on the assumption that the cube complex admits a \textit{factor system} - namely that the cube complex be a hierarchically hyperbolic space  \cite{BHS17}. Recently, Shepherd found an example of a CAT(0) cube complex that does not admit a factor system \cite{She22}. As Theorem \hyperlink{TheoremA}{A} applies to the CAT(0) setting, it in particular applies to any CAT(0) cube complex - including those not covered in prior literature. Similar theorems like Theorem \hyperlink{TheoremA}{A} have been shown for sublinear Morse boundaries of hierarchically hyperbolic spaces \cite{DZ22} and the Morse boundary of CAT(0) spaces \cite{PSZ22}, both of such results requiring a leverage of cubical techniques. Since He in \cite{He23} has shown the Morse boundary with the Cashen-Mackay topology to be homeomorphic to $\p_\K X$ for $\K\equiv 1$, Theorem \hyperlink{TheoremA}{A} recovers the Morse boundary result in \cite{PSZ22} when in regards to the Cashen-Mackay topology. Additional related work of Theorem \hyperlink{TheoremA}{A} can be found in \cite{AM22},  where Abbott and Incerti-Medici study classes of spaces that have \textit{hyperbolic projections} and  are \textit{$\K$-injective}. Hence, Theorem \hyperlink{TheoremA}{A} shows that rank-one CAT(0) spaces and groups fall into the class of objects that Abbott and Incerti-Medici build a framework for studying.
 
\vspace{.25cm}
\noindent 1.3. \textbf{Characterizations of $\K$-Morse rays and hyperbolicity.} The key arguments needed to prove our main result involve characterizing $\K$-Morse geodesics in the same combinatorial fashion as done in the CAT(0) cube complex setting  \cite{CS15, MQZ20}\hypertarget{TheoremC}{.} \vspace{.25cm} \\
\noindent \textbf{Theorem C.} \label{TheoremB}   \textit{In a CAT(0) space, a geodesic ray $b$ is $\K$-contracting if and only if there exists $C>0$ such that $b$ meets a chain of curtains $\{h_i\}$ at points $b(t_i) \in h_i$ satisfying:
\begin{itemize}
     \item $t_{i+1}-t_i\leq C\K(t_{i+1})$
     \item $h_i$ and $h_{i+1}$ are $C\K(t_{i+1}$)-separated 
 \end{itemize}}

A geodesic ray with a dual chain of curtains satisfying the above conditions will be defined as a \textit{$\K$-curtain excursion geodesic} in reference to similarly defined rays in the CAT(0) cube complex setting \cite{MQZ20}. Thus, since the proof of the forward direction finds a dual chain (Proposition \ref{ContractingToExcursion}), Theorem \hyperlink{TheoremC}{C} states that $\K$-contracting rays are $\K$-curtain excursion rays and vice versa. One doesn't need the curtains to be dual to the geodesic to prove the reverse direction, as Proposition \ref{NonDualto Contracting} shows, but a dual chain is usually preferred for more straightforward arguments. In fact, Theorem \hyperlink{TheoremC}{C} indirectly shows that a geodesic meeting a chain as above will also give a  dual chain that meets the geodesic with the same properties as well (see Corollary \ref{dualizing excursion }). Theorem \hyperlink{TheoremC}{C} plays a critical role in the proof of Theorem \hyperlink{TheoremA}{A}  similarly to how the CAT(0) cube complex version of Theorem \hyperlink{TheoremC}{C} in \cite{MQZ20} is used to prove main results in \cite{IMZ21,DZ22,AM22}. Now such a characterization can be applied to all CAT(0) groups.

 Other characterizations of $\K$-contracting rays were shown in \cite{MQZ20} in the CAT(0) cube complex setting, one of which was later defined as a \textit{$\K$-persistent shadow} \cite{DZ22}.  A geodesic ray $b$ in a CAT(0) space $X$ with infinite diameter in the curtain model has a $\K$-persistent shadow if there exists a $C>0$ such that for all $s<t$, 
    $$\hat{d}(b(s),b(t)) \geq C \cdot \displaystyle \frac{t-s}{\K(t)} - C.$$ where $\hat{d}$ is the distance in the curtain model $\widehat{X}$. We also give a similar characterization in the CAT(0) setting\hypertarget{TheoremD}{:}  \vspace{.25cm}

\noindent \textbf{Theorem D.}   \textit{ Let $b$ be geodesic ray emanating from $\ob\in X$ with infinite diameter in the curtain model $\widehat{X}$.\begin{itemize}
     \item If $b$ is $\K$-contracting and $\K^4$ is sublinear, then $b$ has a $\K^4$-persistent shadow in the $\hd$ metric.
     \item If $b$ has a $\K$ persistent shadow in the $\hat{d}$ metric and $\K^2$ is sublinear, then $b$ is $\K^2$-contracting.
 \end{itemize} }
  
Note that when $\K \equiv 1$, Theorem \hyperlink{TheoremD}{D} is equivalent to a geodesic being contracting if and only if the geodesic projects to a \textit{parametrized} quasi-geodesic in $\widehat{X}$. This is a known result in the mapping class group setting when projecting to its curve graph \cite{Ber06, DR10}, and \cite{DZ22} extend the characterization to the sublinear Morse setting in hierarchically hyperbolic spaces. Thus, Theorem \hyperlink{TheoremD}{D} shows yet another comparison that the curve graph is to the mapping class group as the curtain model is to its CAT(0) space (see Section 8 of \cite{Zal23}).

Lastly, Genevois has shown a hyperbolicity criterion for CAT(0) cubes complexes through grids of hyperplanes. As an application of Theorem \hyperlink{TheoremC}{C} in the Morse setting, we create a curtain version for this criterion. Two collections of chains $\mathcal{H}, \mathcal{K}$ are said to be a \textit{curtain grid} if all curtains in $\mathcal{H}$ cross all curtains in $\mathcal{K}$. We prove the following\hypertarget{TheoremE}{.} \vspace{.01cm}

\noindent \textbf{Theorem E.} \textit{ Let $X$ be a CAT(0) space. Then $X$ is hyperbolic if and only if every curtain grid is $E$-thin for some uniform $E > 0$. \vspace{.25cm}}

A curtain grid ($\mathcal{H}, \mathcal{K})$ is \textit{$E$-thin }if $\operatorname{min}\{|\mathcal{H}|, |\mathcal{K}|\} \leq E$. So, similar to the cube complex setting, the intuition behind Theorem \hyperlink{TheoremE}{E} is that large regions of crossing curtain chains imply large regions of flatness in the space. Thus, when one bounds the thickness of all curtain grids in the space, one obtains hyperbolicity.\vspace{.25cm}

\noindent 1.4. \textbf{Further questions.} \vspace{.25cm}

\noindent \textbf{Question 1.} Future work of Petyt, Spriano, and Zalloum will extend curtain machinery to a larger class of spaces than CAT(0) spaces. Zalloum gives a great survey discussing the recent developments of curtains and their relationships with hierarchically hyperbolic spaces in \cite{Zal23}. If the curtain model can be extended to a larger class of spaces, will the results and techniques in this paper also extend as well? In what generality can sublinear Morseness be characterized by curtain excursion?
\vspace{.25cm}

\noindent \textbf{Question 2.} As seen in Definition \ref{Curtain Model}, the distance function in the curtain model is described through a family of $d_L$ metrics inspired by Genevois's hyperplane-separation metrics in \cite{Gen20b}. It is likely that, in the cube complex setting, one could create a hyperbolic ``hyperplane model" that uses hyperplane-separating metrics instead of curtain-separating metrics. If such a hyperplane model can be created, would this model be preferred to the curtain model in the CAT(0) cube complex setting? Would the curtain model be quasi-isometric to the ``hyperplane model"?
\vspace{.25cm}

\noindent \textbf{Outline and proof summaries:} Section \ref{Section 2} covers the necessary background information for all the work in this paper. Section \ref{Section 3} proves Theorem \hyperlink{TheoremC}{C}, where the forward and backward directions are Propositions \ref{ContractingToExcursion} and \ref{NonDualto Contracting}, respectively. Our proofs required synthesizing the curtain machinery along with techniques of \cite{PSZ22,MQZ20} to give a more generalized characterization in the CAT(0) setting. Prior techniques for similar characterizations of Theorem \hyperlink{TheoremC}{C} would give a weaker converse direction in our context, so we use different techniques than prior literature to prove rays \textit{meeting} a $\K$-chain of curtains will be $\K$-contracting. 

Section \ref{Section 4} shows the continuous injection given in Theorem \hyperlink{TheoremA}{A} with Theorem \ref{sublinearly morse continuous} stating continuity with respect to the sublinearly Morse topology and Theorem \ref{Continuous}/\ref{homeo} giving a homeomorphic notion when $\p_\K X$ is endowed with the cone topology. A major contribution of our work is showing $\K$-contracting rays are unbounded in the curtain model (Lemma \ref{unbnd}), and it's why we need the assumption that $\K^4$ is also a sublinear function (as Example \ref{example} shows). This gives that the identity map $X \longrightarrow \widehat X$ can extend to a well defined map  $\varphi: \p_\K X \longrightarrow \p \widehat{X}$. Our arguments for injectivity rely heavily on the characterization stated in Theorem \hyperlink{TheoremC}{C} in order to give proofs of a combinatorial nature. As the cone topology and the \textit{curtain topology} (Definition \ref{curtain top}) are equal on $\p_\K X$ (Lemma \ref{hyp open}), we then use Theorem  \hyperlink{TheoremC}{C} along with open sets defined via collections of rays crossing a curtain to show continuity of $\varphi$. 

Section \ref{Section 5} proves Theorem \hyperlink{TheoremD}{D} (Theorem \ref{persistent characterization}). The work in showing $\K$-contracting rays are unbounded in the curtain model also shows the forward direction of Theorem \hyperlink{TheoremD}{D}, and its argument has a similar flavor to the analogous arguments made in \cite{MQZ20} for the cube complex setting. On the other hand, the reverse direction of Theorem \hyperlink{TheoremD}{D} is entirely original. It is proven by first subdividing the ray into segments of length bounded by $\K^2$ and then deducing there must be at least three $\K$-separated curtains dual to the geodesic in each segment. The first of each set of three curtains creates a $\K^2$-chain dual to the geodesic and Theorem \hyperlink{TheoremC}{C} gives $\K^2$-contracting. Lastly, Section \ref{Section 6} proves Theorem \hyperlink{TheoremE}{E} (Theorem \ref{grid theorem}). The forward and converse assumptions both give that curtains dual to the same geodesic at a sufficiently far distance from each other must be $L$-separated for some uniform $L$. This connection leads to the equivalence in our statement.
\vspace{.25cm}

\noindent \textbf{Acknowledgements:} I would like to thank my advisor Matthew Gentry Durham for his amazing support, guidance, weekly meetings, and feedback on this project. I'd also like to thank Kunal Chawla, Merlin Incerti-Medici, Jacob Garcia, Anthony Genevois, Vivian He, Harry Petyt, Yulan Qing, and Abdul Zalloum for extremely helpful conversations and comments.

More specifically,  I am very appreciative of Abdul for his multiple and valuable discussions about the curtain model in addition to his encouragement for me to pursue many of the problems in this project. I am extremely thankful for Kunal's discussion on walking through the proper arguments for Corollary \hyperlink{CorollaryB}{B}. I am also grateful to Harry for our conversation that inspired the development of Subsection \hyperlink{Section3.2}{3.2}. Lastly, the warmest of thanks goes to Jacob Garcia for the many hours of friendly help and guidance in my PhD program.
 
 \section{ Preliminaries}\label{Section 2}

 This section will cover relevant background for the paper. Subsection \hyperlink{Section2.1}{2.1} recalls CAT(0) spaces and their geometry, Subsection \hyperlink{Section2.2}{2.2} reviews sublinearly Morse boundaries, and Subsection \hyperlink{Section2.3}{2.3} gives key definitions and lemmas in \cite{PSZ22} of the curtain machinery we will using throughout the paper\hypertarget{Section2.1}{.} \\\\
\noindent 2.1. \textbf{CAT(0) spaces.}


 \begin{definition}[Quasi-isometric embedding, Quasi-isometry]
             Let $(X,d_X)$ and $(Y,d_Y)$ be metric spaces. A function $f:X\rightarrow Y$ is called a \textit{$(K,C)$-quasi-isometric embedding} if, for every pair of points $x,x'\in X$ we have
         $$\frac{1}{K}d_X(x,x') -C \leq d_Y(f(x),f(x'))\leq Kd_X(x,x')+C. $$

         If for all $y \in Y$, $d_Y(f(X),y) \leq C$, we call $f$ a \textit{quasi-isometry}.
 \end{definition}

 \begin{definition}[Geodesics, Quasi-geodesics] A \textit{geodesic ray} in a space $X$ is a isometric embedding $b: [0,\infty) \rightarrow X$. As a convention, we fix some $\ob \in X$ and assume always that $b(0) = \ob$. Similarly, a \textit{quasi-geodesic ray} is a quasi-isometric embedding $\B : [0,\infty) \rightarrow X$ based at $\ob$.
     
 \end{definition}

 \begin{definition}[CAT(0) Space]
     A geodesic metric space $(X,d_X)$ is said to be \textit{CAT(0)} if geodesic triangles in $X$ are at least as thin as corresponding representative triangles in Euclidean space. More precisely, given any triangle $\triangle xyz$, one can create a representative triangle in the Euclidean plane $\triangle \overline{xyz}$ where matching sides of both triangles have the same lengths. See Figure \ref{fig:CAT(0)}.      If one picks any point  $p$ on $ \triangle xyz$, say $p$ is on edge $[y,z]$, there exists a corresponding point $\overline p$ on $\triangle \overline{xyz}$ such that $d_X(y,p) = d_{\mathbb{E}^2}(\overline{y}, \overline{p})$ and $d_X(p,z) = d_{\mathbb{E}^2}(\overline{p}, \overline{z})$. Now, picking any two points $p, q$ on $\triangle xyz$, a CAT(0) space must have the relationship: $$d_X(p,q) \leq d_{\mathbb{E}^2}(\overline{p}, \overline{q}).$$

 \end{definition}

        \begin{figure}[ht]
        \centering
        \begin{tikzpicture}[scale=.8]
    
    \draw[fill] (0,3) circle [radius=0.05];
    \draw[fill] (-2,0) circle [radius=0.05];
    \draw[fill] (2,0) circle [radius=0.05];
    \draw (0,3) to[out=250, in=50] (-2,0);
    \draw (-2,0) to[out=20, in=160] (2,0);
     \draw (2,0) to[out=130, in=290] (0,3);
     \node[above] at (0,3.2) {$\triangle xyz$ in $X$};
     \node[right] at (0,3) {$x$};
     \node[below] at (-2,0) {$y$};
     \node[below] at (2,0) {$z$};
     \node[below] at (.6,.35) {$p$};
     \node[above] at (1, 1.2) {$q$};
     \draw[fill] (1,1.2) circle [radius=0.05];
     \draw[fill] (.6,.35) circle [radius=0.05];
     \draw[blue] (.6,.35) to (1,1.2);

    \draw[fill] (7,3) circle [radius=0.05];
    \draw[fill] (5,0) circle [radius=0.05];
    \draw[fill] (9,0) circle [radius=0.05];
    \draw (7,3) to (5,0);
    \draw (5,0) to (9,0);
    \draw (9,0) to (7,3);
    \node[above] at (7,3.2) {$\triangle \overline{xyz}$ in $\mathbb{E}^2$};
    \node[right] at (7,3) {$\bar x$};
    \node[below] at (5,0) {$ \bar y$};
    \node[below] at (9,0) {$\bar z$};
     \draw[fill] (7.5,0) circle [radius=0.05];
     \draw[fill] (8.1,1.35) circle [radius=0.05];
    \node[below] at (7.5,0) {$\bar p$};
    \node[above] at (8.1, 1.35) {$\bar q$};
    \draw[blue] (7.5,0) to (8.1, 1.35);
    \end{tikzpicture}

        \caption{For a CAT(0) space $X$, the geodesic $[x,z]$ in $X$ is the same length as the geodesic $[\overline x, \overline z]$ in $\mathbb{E}^2$. The same relationship holds for the other two sides of both triangles.}
        \label{fig:CAT(0)}
    \end{figure}
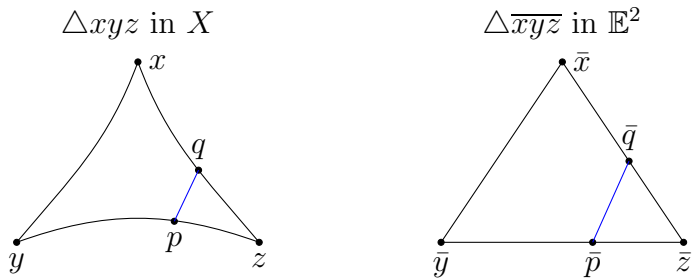

 When working with a CAT(0) space, one gets to enjoy a number of useful properties below.

 \begin{lemma}[See \cite{BH99}] A CAT(0) space $X$ has the following properties:

 \begin{enumerate}
     \item It is uniquely geodesic, that is, for any two points $x, y$ in $X$, there exists exactly one geodesic connecting them.
     \item The nearest-point projection from a point $x$ to a geodesic $b$ is a unique point. In fact, the closest-point projection map$$\pi_b: X \rightarrow b$$ is 1-Lipschitz.
\item Convexity: For any convex set $Z \in X$, the distance function $f: X \rightarrow \mathbb{R}^{+}$ given by $f(x)=d(x, Z)$ is convex in the sense of \cite[II.2.1]{BH99}\hypertarget{Section2.2}{.}

 \end{enumerate}

 \end{lemma}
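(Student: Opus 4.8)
The plan is to derive all three items from one underlying fact — convexity of the CAT(0) metric along pairs of geodesics — and then specialize. So first I would establish the auxiliary claim that if $\gamma,\sigma\colon[0,1]\to X$ are geodesics linearly reparametrized on $[0,1]$, then $t\mapsto d_X(\gamma(t),\sigma(t))$ is convex. To get this, fix $t$, look at the geodesic quadrilateral with vertices $\gamma(0),\gamma(1),\sigma(1),\sigma(0)$, cut it along the diagonal $[\gamma(0),\sigma(1)]$, and let $m$ be the point at parameter $t$ along that diagonal from $\gamma(0)$. Two applications of the CAT(0) comparison inequality — one to $\triangle(\gamma(0),\gamma(1),\sigma(1))$ and one to $\triangle(\gamma(0),\sigma(1),\sigma(0))$ — together with the elementary similar-triangles computation in the Euclidean comparison triangles give $d_X(\gamma(t),m)\le t\,d_X(\gamma(1),\sigma(1))$ and $d_X(m,\sigma(t))\le(1-t)\,d_X(\gamma(0),\sigma(0))$; the triangle inequality then closes it out. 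Taking $\sigma$ constant recovers convexity of $x\mapsto d_X(x,z)$ along geodesics as a special case, which is the ingredient I will reuse for item (3).

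Item (1) is then immediate: if two geodesics $\gamma,\sigma$ both run from $x$ to $y$, reparametrize both on $[0,1]$; the function $t\mapsto d_X(\gamma(t),\sigma(t))$ is convex, vanishes at both endpoints, and therefore vanishes identically, so $\gamma=\sigma$.

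For item (2), I would first observe that a geodesic $b$, being isometric to an interval, is a complete convex subspace, so that the general theory of nearest-point projection onto complete convex subsets of CAT(0) spaces applies. Existence of $\pi_b(x)$ comes from taking a distance-minimizing sequence in $b$ and using the CAT(0) (Bruhat--Tits) inequality at midpoints to see it is Cauchy; its limit lies in $b$ and realizes $d_X(x,b)$. Uniqueness comes from convexity of $b$ — the midpoint of two minimizers lies in $b$ — plus a one-line computation in the Euclidean comparison triangle $\overline{\triangle}(x,p,q)$ showing that this midpoint is strictly closer to $x$ unless the two minimizers coincide; this also shows $\pi_b$ restricts to the identity on $b$. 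The $1$-Lipschitz bound is the one place where a genuinely new idea enters: for $p=\pi_b(x)$ and any $z\in b$, convexity (again) of $t\mapsto d_X(x,\gamma_{p,z}(t))$ together with the fact that it is minimized at $t=0$ forces, via the first variation formula, the Alexandrov angle at $p$ between $[p,x]$ and $[p,z]$ to be at least $\pi/2$; feeding this in at both projection points and comparing the geodesic quadrilateral $x,\pi_b(x),\pi_b(y),y$ against a Euclidean model whose angles at the two projected vertices are $\ge\pi/2$ yields $d_X(\pi_b(x),\pi_b(y))\le d_X(x,y)$. I expect this quadrilateral comparison to be the main technical obstacle; the rest is bookkeeping around the convexity lemma. (Alternatively one simply cites \cite{BH99}, as the statement does.)

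Finally, for item (3): given a geodesic $\gamma\colon[0,1]\to X$ and a convex set $Z$, pick nearest points $z_0,z_1\in Z$ to $\gamma(0),\gamma(1)$ (or points realizing the infimum up to $\varepsilon$ if $Z$ is not assumed complete), let $\sigma\colon[0,1]\to Z$ be the geodesic from $z_0$ to $z_1$, which lies in $Z$ by convexity, and note that $f(\gamma(t))=d_X(\gamma(t),Z)\le d_X(\gamma(t),\sigma(t))$ since $\sigma(t)\in Z$. The auxiliary convexity claim bounds the right-hand side by $(1-t)\,d_X(\gamma(0),z_0)+t\,d_X(\gamma(1),z_1)=(1-t)f(\gamma(0))+t\,f(\gamma(1))$ (up to $\varepsilon$), which is precisely convexity of $f$ in the sense of \cite[II.2.1]{BH99}.
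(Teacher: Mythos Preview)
The paper does not actually prove this lemma; it simply states the properties and cites \cite{BH99} for the proofs. Your sketch is correct and is essentially the standard argument found in that reference (Propositions II.2.2 and II.2.4 of Bridson--Haefliger), so there is nothing to compare against beyond noting that you have supplied what the paper chose to outsource.
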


 \noindent 2.2. \textbf{ Sublinearly-Morse boundaries.}\vspace{.25cm}

 In the context of CAT(0) spaces, geodesic rays based at $\ob \in X$ determine a ``direction tending towards infinity". The collection of all these directions give a notion of a boundary. It is the restriction of this collection to sublinearly Morse geodesic rays which creates a boundary that is both a quasi-isometry invariant and still captures the ``generic" directions of $X$. We review the definition and properties of $\kappa$ Morse geodesic rays needed for this paper. For further details and expansion on below definitions, see \cite{QRT19}.
 
\begin{definition}[Sublinear function]
    We fix a function

    \begin{center}
        $\kappa:[0, \infty) \rightarrow[1, \infty) \quad$ such that $\quad \displaystyle \lim _{t \rightarrow \infty} \frac{\kappa(t)}{t}=0. $
    \end{center}

This second requirement makes $\K$ \textit{sublinear}. We also add the convention that $\K$ be monotone increasing and concave (This is not a necessary convention, but it makes arguments cleaner - see \cite{QRT19}). For the chosen fixed basepoint $\ob \in X$, we denote 

$$ \K(x) = \K(d(\ob,x)).$$

\end{definition}

Oftentimes, we create upper bounds of distances in terms of $\K$ with a specific input. The following computational lemma gives us a way to change our inputs of $\K$ for a more suitable upper bound, and it is used throughout the paper.

\begin{lemma}[Lemma 3.2 in \cite{QRT19}]
\label{3.2}For any $D_0>0$, there exist $D_1,D_2>0$ depending only on $D_0$ and $\K$ so that for $x,y\in X$, we have \begin{center}
     $d(x,y)\leq D_0\K(x) \Rightarrow D_1\K(x)\leq\K(y)\leq D_2\K(x).$
 \end{center}

\end{lemma}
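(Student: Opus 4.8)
The plan is to reduce the statement to an elementary inequality about the sublinear function $\K$ and the distances of $x,y$ from the basepoint $\ob$. Write $r = d(\ob, x)$ and $s = d(\ob, y)$, so that $\K(x) = \K(r)$ and $\K(y) = \K(s)$. The triangle inequality together with the hypothesis $d(x,y) \le D_0 \K(x)$ gives $|r - s| \le D_0 \K(r)$, hence $r - D_0 \K(r) \le s \le r + D_0 \K(r)$. It therefore suffices to produce $D_1, D_2 > 0$, depending only on $D_0$ and $\K$, such that $D_1 \K(r) \le \K(s) \le D_2 \K(r)$ whenever $s$ lies in this interval.

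The main tool is the standard consequence of concavity: since $\K$ is concave with $\K(0) \ge 1 > 0$, for every $t \ge 0$ and every $\lambda \ge 1$ one has $\K(\lambda t) \le \lambda \K(t)$. (Write $t = \tfrac{1}{\lambda}(\lambda t) + (1 - \tfrac{1}{\lambda})\cdot 0$ and apply concavity, using $\K(0) \ge 0$.) I will also use that $\K$ is monotone increasing and that $\K \ge 1$ everywhere. Next I would split into two regimes according to the size of $r$. By sublinearity, fix $R_0 = R_0(D_0, \K) > 0$ large enough that $D_0 \K(t) \le t/2$ for all $t \ge R_0$. In the large regime $r \ge R_0$ we then have $r/2 \le s \le 2r$; applying monotonicity and the concavity inequality with $\lambda = 2$ to both ends gives $\K(r)/2 \le \K(r/2) \le \K(s) \le \K(2r) \le 2\K(r)$, so $D_1 = 1/2$ and $D_2 = 2$ work there. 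In the bounded regime $0 \le r \le R_0$, monotonicity gives $s \le r + D_0 \K(r) \le R_0 + D_0 \K(R_0) =: M$, whence $1 \le \K(s) \le \K(M)$, while $1 \le \K(r) \le \K(R_0)$; therefore $\K(s)/\K(r) \le \K(M)$ and $\K(s)/\K(r) \ge 1/\K(R_0)$. Taking $D_1 = \min\{1/2,\, 1/\K(R_0)\}$ and $D_2 = \max\{2,\, \K(M)\}$ completes the argument, and all these constants depend only on $D_0$ and $\K$.

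The only real obstacle is the bounded regime $r \le R_0$: there sublinearity gives no control on the ratio $\K(r)/r$ (indeed $\K \ge 1$ forces this ratio to blow up as $r \to 0$), so one cannot argue uniformly through the concavity inequality and must instead fall back on the crude bounds $1 \le \K \le \K(R_0)$ over the compact range $[0,R_0]$ — which is precisely why the threshold $R_0$ must be chosen first, using the sublinearity hypothesis, before the two estimates can be combined. Everything else is bookkeeping with the triangle inequality and the two elementary properties (monotonicity and the $\K(\lambda t) \le \lambda \K(t)$ bound) of $\K$.
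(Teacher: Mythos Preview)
Your proof is correct. Note, however, that the paper does not actually prove this lemma: it is quoted verbatim as Lemma~3.2 of \cite{QRT19} and used as a black box throughout. So there is no ``paper's own proof'' to compare against here. That said, your argument is exactly the standard one (and essentially what appears in \cite{QRT19}): reduce to a one-variable statement via the triangle inequality, use the concavity bound $\K(\lambda t) \le \lambda\,\K(t)$ for $\lambda \ge 1$ together with monotonicity in the large-$r$ regime, and fall back on the crude bounds $1 \le \K \le \K(R_0)$ on the compact initial segment. Nothing is missing.
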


\begin{definition}[$\kappa$-contracting geodesic, contracting constant]
     A geodesic ray $b$ is said to be\textit{ $\kappa$-contracting} if there exists a constant $C \geq 0$ such that for any $x \in X$ and any ball $B$ centered at $x$ with $B \cap b=\emptyset$, we have $\operatorname{diam}\left(\pi_{b}(B)\right)<C \cdot \kappa(x)$. We call the constant $C$ the \textit{contracting constant.}
\end{definition}

\begin{definition}[$\kappa$-neighborhood]
    For a closed set $Z$ and a constant $\mathrm{n} \geq 0$, define the \textit{$(\kappa, \mathrm{n})$-neighbourhood} of $Z$ to be

$$
\mathcal{N}_{\kappa}(Z, \mathrm{n})=\left\{x \in X \mid d_{X}(x, Z) \leq \mathrm{n} \cdot \kappa(x)\right\} .
$$
Note: We often abbreviate to \textit{$\K$-neighborhood}, and our closed set $Z$ will always be a geodesic or quasi-geodesic in this paper.
\end{definition}

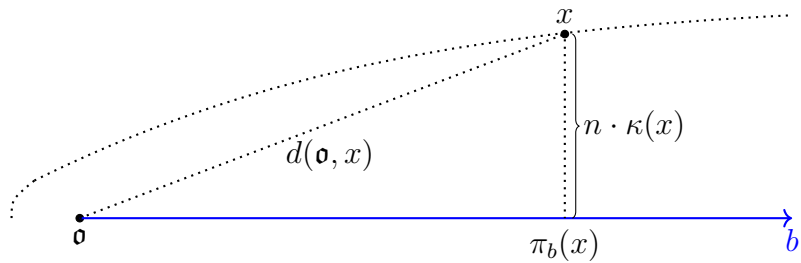
\begin{figure}[ht]
    \centering

    \begin{tikzpicture}[scale=1]
    \node[below] at (.6,-2) {$\mathfrak{o}$};
    \draw[fill] (.6,-2) circle [radius=0.05];
    \draw [blue,thick, ->](.6,-2) to (10,-2);
    \node [below, blue] at (10,-2) {$b$};
    \draw[dotted, thick] (7,.38) to (7,-2);
    \draw[fill] (7,.45) circle [radius=0.05];
    \node[above] at (7,.45) { $x$};
    \draw[dotted, thick] (.6,-2) to (7,.45);
    \node[below] at (3.9, -.8) { $d(\ob,x)$};
    \node[below] at (7,-2) {$\pi_b(x)$};

\draw [decorate,
    decoration = {brace}] (7.1,.45) --  (7.1,-2);
    \node [right] at (7.1,-.78) { $n\cdot \kappa(x)$};
\draw [dotted, thick] (0,-1.5) .. controls (3,0) and (6,.5) .. (10,.7);
\draw[dotted, thick] (0,-1.5) .. controls (-.3,-1.75) .. (-.3, -2);

    \end{tikzpicture}
    \caption{A $(\K, n)$-neighborhood of geodesic $b$.}
    \label{fig:my_label}
\end{figure}

\begin{definition}[$\kappa$-fellow travelling]
    Let $\alpha$ and $\beta$ be two infinite quasi-geodesic rays in $X$. If $\alpha$ is contained in some $\kappa$-neighbourhood of $\beta$ and $\beta$ is contained in some $\kappa$-neighbourhood of $\alpha$, we say that $\alpha$ and $\beta$ \textit{$\kappa$-fellow travel} each other. This defines an equivalence relation on the set of quasi-geodesic rays in $X$. It is known that, in CAT(0) spaces, each equivalence class of quasi-geodesics contains a unique geodesic ray emanating from $\ob$. (See \cite{QRT19}). 

\end{definition}

\begin{definition}[$\kappa$-Morse geodesics]
     A geodesic $b$ is \textit{$\kappa$-Morse} if there is a function $m_{b}: \mathbb{R}_{+}^{2} \rightarrow \mathbb{R}_{+}$ so that if $\alpha:[s, t] \rightarrow X$ is a (K, C)-quasi-geodesic with end points on $b$ then

$$
\alpha[s, t] \subset \mathcal{N}_{\kappa}\left(b, m_b(K, C)\right).
$$

We refer to $m_{b}$ as the\textit{ Morse gauge} for $b$. We also always assume that $m_{b}(K, C)$ is the largest element in the set $\left\{K, C, m_{b}(K, C)\right\}$. Note that when $\kappa \equiv 1$ we recover the standard definition of a Morse geodesic. For those with independent interest of Morse geodesics, see \cite{Cor19}.
\end{definition}

The following theorem is useful for connecting the above definitions as most of the language in this paper uses $\K$-contracting instead of $\K$-Morse. In order to use the following characterization, we now assume that $X$ is a \textit{proper} CAT(0) space for the remainder of the paper.

\begin{theorem}[Theorem 3.8 in \cite{QRT19}]
Let $X$ be a proper CAT(0) space. A geodesic ray $b$ is $\K$-contracting if and only if it is $\kappa$-Morse.
\end{theorem}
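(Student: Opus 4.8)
This is the sublinear analogue of the classical equivalence ``contracting $\Leftrightarrow$ Morse'' for geodesics (compare \cite{CS15, Cor16}), so I would adapt that template while carefully tracking the point at which $\K$ is evaluated, using the fixed convention $\K(x)=\K(d(\ob,x))$. I would treat the two implications separately; as indicated in the text, properness of $X$ should be needed only for $\K$-Morse $\Rightarrow$ $\K$-contracting.

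\emph{$\K$-contracting $\Rightarrow$ $\K$-Morse.} Let $b$ be $\K$-contracting with constant $C$ and let $\alpha\colon[s,t]\to X$ be a $(K,C')$-quasi-geodesic with endpoints on $b$. First I would replace $\alpha$ by a continuous quasi-geodesic with comparable constants that obeys the usual length estimate $\operatorname{length}(\alpha|_{[u,v]})\le K_1 d(\alpha(u),\alpha(v))+C_1$ on every subarc (the standard taming), so that it is enough to bound $d(\alpha(u),b)$ uniformly in terms of $\K$. Let $w=\alpha(t_0)$ realize $M:=\max_u d(\alpha(u),b)$; if $M=0$ then $\alpha\subseteq b$ and we are done, so assume $M>0$. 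The open ball $B(w,M)$ is disjoint from $b$, so by the contracting hypothesis any point $v$ with $d(v,w)\le\tfrac12 M$ satisfies $d(\pi_b(v),\pi_b(w))\le\operatorname{diam}\pi_b(B(w,M))<C\,\K(w)$. Applying this to the points at which $\alpha$ enters and leaves $B(w,\tfrac12 M)$ on each side of $t_0$ — which lie at distance $\tfrac12 M$ from $w$, hence at distance at least $\tfrac12 M$ from $b$ — shows that the excursion of $\alpha$ around $w$ makes essentially no progress along $b$ while travelling a definite amount. Quantifying this via the standard subdivision argument underlying the Morse lemma, and using that $\K$ is monotone and concave so that Lemma \ref{3.2} lets one freely replace $\K(w)$ by $\K$ evaluated at comparable points, I would extract a bound $M\le\mathrm{n}\cdot\K(w)$ with $\mathrm{n}=\mathrm{n}(K,C',C)$. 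Then $\alpha\subseteq\mathcal N_\K(b,\mathrm n)$, and $m_b(K,C'):=\mathrm n$ is the desired Morse gauge.

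\emph{$\K$-Morse $\Rightarrow$ $\K$-contracting.} I would prove the contrapositive. If $b$ is not $\K$-contracting, then for each $n$ there are a ball $B(x_n,r_n)$ disjoint from $b$ and points $y_n,z_n\in B(x_n,r_n)$ with $d(\pi_b(y_n),\pi_b(z_n))>n\,\K(x_n)$; since closest-point projection is $1$-Lipschitz this forces $r_n>\tfrac n2\K(x_n)$ and hence $d(x_n,b)\ge r_n\to\infty$. From each configuration I would form the candidate competitor $\gamma_n$ from $\pi_b(y_n)$ to $\pi_b(z_n)$ by concatenating the geodesics $[\pi_b(y_n),y_n]$, $[y_n,z_n]$, $[z_n,\pi_b(z_n)]$, using the standard CAT(0) fact that closest-point projection is constant along each of the two outer segments. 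The content is then to arrange that the $\gamma_n$ are $(\lambda,\epsilon)$-quasi-geodesics with $\lambda,\epsilon$ independent of $n$: here properness of $X$ enters, via an Arzel\`a--Ascoli extraction of a limiting configuration (after rescaling if necessary). Granting this, one has uniform quasi-geodesics with both endpoints on $b$ that fail to lie in any $\mathcal N_\K(b,\mathrm n)$, contradicting the $\K$-Morse property.

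\emph{The main obstacle.} I expect the delicate step to be the propagation in the first implication: the single-ball estimate $\operatorname{diam}\pi_b(B(w,M))<C\,\K(w)$ must be carried along the entire excursion of $\alpha$, during which both the scale $M$ and the argument at which $\K$ is evaluated vary, so one must repeatedly invoke Lemma \ref{3.2} and the monotonicity/concavity conventions on $\K$ to keep all the $\K$-terms comparable — this is exactly where the sublinear argument departs from the classical $\K\equiv 1$ case. In the second implication the subtlety is instead obtaining quasi-geodesic constants for the competitor paths that do not degenerate as $n\to\infty$, which is what makes the properness hypothesis necessary.
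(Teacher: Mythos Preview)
The paper does not contain a proof of this statement: it is quoted verbatim as ``Theorem 3.8 in \cite{QRT19}'' and used as a black box, so there is nothing in the present paper to compare your proposal against. Your sketch is broadly in the spirit of the argument in \cite{QRT19}, but since no proof appears here I can only comment on the proposal itself.

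The forward direction is essentially the correct outline, and indeed the delicate point is exactly the one you flag: propagating the single-ball projection bound along the whole excursion while keeping the $\K$-arguments comparable via Lemma~\ref{3.2}. For the reverse direction, your contrapositive with the three-segment competitor $[\pi_b(y_n),y_n]\ast[y_n,z_n]\ast[z_n,\pi_b(z_n)]$ is the right shape, but the step ``arrange that the $\gamma_n$ are $(\lambda,\epsilon)$-quasi-geodesics with $\lambda,\epsilon$ independent of $n$'' is doing all of the work and is not justified by the Arzel\`a--Ascoli gesture you make. The difficulty is that the two outer legs each have length comparable to $r_n$, while the endpoints $\pi_b(y_n),\pi_b(z_n)$ are only guaranteed to be $n\,\K(x_n)$ apart; since you only know $r_n>\tfrac{n}{2}\K(x_n)$, nothing prevents $r_n$ from being enormously larger than $d(\pi_b(y_n),\pi_b(z_n))$, in which case $\gamma_n$ has length far exceeding the distance between its endpoints and is not a quasi-geodesic with uniform constants. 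The actual argument in \cite{QRT19} does not proceed by a limiting extraction of competitors; rather it passes through an intermediate ``$\K$-weakly contracting'' condition and a direct quantitative estimate, and properness is used to guarantee existence of certain nearest points rather than for Arzel\`a--Ascoli. So your second implication, as written, has a genuine gap.
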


\begin{definition}[Sublinearly Morse boundary]
     Let $\kappa$ be a sublinear function and let $X$ be a proper CAT(0) space. We define the \textit{$\kappa$-Morse boundary}, as a set, by

$$
\partial_{\kappa} X:=\{\text { all } \kappa \text {-Morse quasi-geodesics }\} / \kappa \text {-fellow travelling .}
$$

We postpone discussion of the topologies defined on $\p_\K X$ until Section \ref{Section 4} where such definitions will be relevant.

\begin{notation}
For the remainder of this paper, we will denote elements of $\partial_{\kappa} X$ as $a^\infty$, $b^\infty$, and so on, where $a$ and $b$ will be the unique geodesics based at $\ob$ that is in the equivalence class of $a^\infty$ and $b^\infty$, respectively. When relevant, quasi-geodesics in the same equivalence class as $a^\infty$ and $b^\infty$ will be denoted as $\alpha$ and $\beta$, respectively. We also often identify geodesics and quasi-geodesics with their images in $X$ and treat them as subset of $X$ when convenient. Lastly, we often use $C$ and $D$ to represent contracting and/or Morse constants\hypertarget{Section2.3}{.}
\end{notation}

\noindent 2.3. \textbf{ Curtain machinery.}\vspace{.25cm}

We now import many cubical results created in \cite{PSZ22}. The most important of which is the definition of a \textit{curtain} as defined below. Curtains in CAT(0) spaces become the analogue of hyperplanes in CAT(0) cube complexes we desire. The main comparison we are interested in is how geodesics crossing curtains in CAT(0) spaces mimic behavior of geodesics crossing hyperplanes in CAT(0) cube complexes. In order to show this comparison in Section \ref{Section 3}, we review the lemmas and definitions below.
 
 \begin{definition}[Curtain, Pole]\label{curtain} Let $X$ be a CAT(0) space and let $b:I\rightarrow X $ be a geodesic. For any number $r$ such that $[r-\frac{1}{2}, r+\frac{1}{2}]$ in in the interior of $I$, the \textit{curtain dual to $b$} at $r$ is 
 \begin{center}
 $h=h_b=h_{b,r}=\pi^{-1}_b(b[r-\frac{1}{2}, r+\frac{1}{2}])$
 \end{center}
where $\pi_b$ is the closest point projection to $b$. We call the segment $b[r-\frac{1}{2}, r+\frac{1}{2}]$ the \textit{pole} of the curtain which we denote as $P$ when needed.

 \end{definition}

 It is worth noting that curtains $h_{b,r}$ are defined from some geodesic $b$ at time $r$, but we often use the simpler notation $h$ when such information is not needed or already implied. There are certain properties that curtains and hyperplanes have in common. An example is that both curtains and hyperplanes separate their complements into two component \textit{half spaces} which we denote as $h^-$ and $h^+$. Also, both curtains and hyperplanes are closed as sets. 

\begin{remark}
    A notable difference between curtains and hyperplanes is that curtains are not convex, see \cite[Remark 2.4]{PSZ22} for more details.
\end{remark}

\begin{definition}[Chain, Separates]
    A curtain $h$ \textit{separates} sets $A,B \subset X$ if $A \subset h^-$ and $B \subset h^+$. A set $\{h_i\}$ is a \textit{chain} if each of the $h_i$ are disjoint and $h_i$ separates $h_{i-1}$ and $h_{i+1}$ for all $i$. We say a chain $\{h_i\}$ \textit{separates} sets $A,B \subset X$ if each $h_i$ separates $A$ and $B$.
\end{definition}

 The notion of chains separating two sets $A$ and $B$ can give a description of the distance between sets $A$ and $B$. More specifically, a maximal chain that separates two points $x,y \in X$ tells us the distance between $x$ and $y$, as shown in the following lemma.

\begin{lemma}[Lemma 2.10  in \cite{PSZ22}]
    For any $x,y \in X$, there is a chain $c$ of curtains dual to $[x,y]$ such that $1 + |c| = \lceil d(x,y) \rceil$.
\end{lemma}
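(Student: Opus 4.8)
The plan is to produce the chain as a family of curtains all dual to the geodesic $b\colon[0,L]\to X$ with $b(0)=x$, $b(L)=y$, where $L=d(x,y)$ (I may assume $x\neq y$, so $L>0$). The one structural fact I need is that the pole of a curtain dual to $b$ records an interval in $b$: since $\pi_b$ is single-valued on a CAT(0) space and $b$ is an isometric embedding, any $p\in h_{b,r}$ has $\pi_b(p)=b(t)$ for a unique $t\in[r-\tfrac12,r+\tfrac12]$. Hence, if $r<r'$ have their $\tfrac12$-neighbourhoods inside $(0,L)$ and $r'-r>1$, then $h_{b,r}\cap h_{b,r'}=\emptyset$: a common point $p$ would give $t\in[r-\tfrac12,r+\tfrac12]$ and $t'\in[r'-\tfrac12,r'+\tfrac12]$ with $b(t)=\pi_b(p)=b(t')$, forcing $t=t'$ in two disjoint intervals. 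Orienting the half-spaces of each $h_{b,r}$ so that $b(0)\in h_{b,r}^-$ and $b(L)\in h_{b,r}^+$, the standard description of the half-spaces of a curtain dual to a geodesic (from \cite{PSZ22}) says $p\in h_{b,r}^-$ precisely when its $\pi_b$-image is $b(t)$ with $t<r-\tfrac12$, and $p\in h_{b,r}^+$ precisely when $t>r+\tfrac12$.

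Now set $n:=\lceil L\rceil-1\ge 0$. If $n=0$, i.e. $L\le 1$, the empty chain already satisfies $1+0=1=\lceil L\rceil$, so assume $n\ge 1$. Note $n<L$, since $\lceil L\rceil<L+1$ always. I want reals $\tfrac12<r_1<\dots<r_n<L-\tfrac12$ with $r_{i+1}-r_i>1$ for each $i$; after the affine substitution $s_i:=r_i-i$ these requirements become exactly $-\tfrac12<s_1<s_2<\dots<s_n<L-\tfrac12-n$, a chain of strict inequalities that can be solved iff $-\tfrac12<L-\tfrac12-n$, i.e. iff $n<L$ — for instance take the $s_i$ equally spaced in $(-\tfrac12,\,L-\tfrac12-n)$. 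Each resulting $[r_i-\tfrac12,r_i+\tfrac12]$ lies in $(0,L)$, so $h_i:=h_{b,r_i}$ is a curtain dual to $b=[x,y]$. By the first paragraph the $h_i$ are pairwise disjoint (any two $r_i,r_j$ differ by more than $1$); and for each $i$, every point of $h_{i-1}$ has $\pi_b$-image $b(t)$ with $t\le r_{i-1}+\tfrac12<r_i-\tfrac12$, so $h_{i-1}\subset h_i^-$, while symmetrically every point of $h_{i+1}$ has $\pi_b$-image $b(t)$ with $t\ge r_{i+1}-\tfrac12>r_i+\tfrac12$, so $h_{i+1}\subset h_i^+$. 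Thus $\{h_i\}_{i=1}^{n}$ is a chain of curtains dual to $[x,y]$ with $|c|=n=\lceil L\rceil-1$, giving $1+|c|=\lceil d(x,y)\rceil$.

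I do not expect a genuine obstacle here: granting single-valuedness of $\pi_b$ and the standard half-space description, the statement reduces to the elementary count in the second paragraph. The only point that genuinely matters is that consecutive poles must be separated by a gap \emph{strictly} larger than $1$ — poles at consecutive integers share an endpoint and so yield non-disjoint curtains — which is precisely what pushes the count down from the naive $\lfloor L\rfloor+1$ to $\lceil L\rceil-1$. The substitution $s_i=r_i-i$ makes transparent both that $\lceil L\rceil-1$ such times fit and (by the same bound $r_n-r_1>n-1$ together with $r_n-r_1<L-1$) that no chain of curtains dual to $[x,y]$ can be longer, which is why the conclusion is an equality rather than merely an inequality.
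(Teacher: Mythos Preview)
The paper does not prove this lemma; it is cited from \cite{PSZ22} and stated without proof, so there is no argument in the paper to compare against. Your proof is correct: the construction of $n=\lceil L\rceil-1$ curtains dual to $b$ with pairwise pole-gaps strictly exceeding $1$ works exactly as you describe, and the substitution $s_i=r_i-i$ cleanly reduces the feasibility question to the inequality $n<L$, which always holds. Your final paragraph on maximality is not needed for the statement as written (which only asserts existence of a chain of the specified size), but it is a correct and useful observation.
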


Many of our arguments involve a geodesic crossing a curtain. A geodesic $b$ will \textit{cross} a curtain $h$ if there exists $s<t$ such that $b(s) $ and $b(t)$ are separated by $h$. The next lemma describes how $b$ interacts with the pole of $h$ when $b$ crosses $h$, and we use this lemma implicitly throughout the paper.

\begin{lemma}[Lemma 2.5 in \cite{PSZ22}] \label{crossing}
Let $h=h_{b, r}$ be a curtain, and let $x \in h^{-}, y \in h^{+}$. For any continuous path $\gamma:[c, d] \rightarrow X$ from $x$ to $y$ and any $t \in[r-1 / 2, r+1 / 2]$, there is some $p \in[c, d]$ such that $\pi_b (\gamma(p))=b(t).$
\end{lemma}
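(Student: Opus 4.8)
The plan is to push everything through the closest‑point projection $\pi_b$ and reduce the statement to the intermediate value theorem. First I would introduce the ``projection‑parameter'' map $g := b^{-1}\circ\pi_b : X \to I$, which is continuous: $\pi_b$ is $1$‑Lipschitz by the nearest‑point projection property of CAT(0) spaces, and $b$ is an isometric embedding, so $b^{-1}$ is an isometry on $b(I)$. By the very definition of the curtain, $h = g^{-1}\big([\,r-\tfrac12,\ r+\tfrac12\,]\big)$, and the two half‑spaces are (up to relabelling the two sides) $h^- = g^{-1}\big([\inf I,\ r-\tfrac12)\big)$ and $h^+ = g^{-1}\big((r+\tfrac12,\ \sup I]\big)$, as set up in \cite{PSZ22}. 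In particular, $x \in h^-$ forces $g(x) < r - \tfrac12$ and $y \in h^+$ forces $g(y) > r + \tfrac12$.

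Next I would consider the composite $g\circ\gamma : [c,d]\to I\subseteq\mathbb{R}$, which is continuous as a composition of continuous maps, with $g(\gamma(c)) = g(x) < r-\tfrac12$ and $g(\gamma(d)) = g(y) > r+\tfrac12$. Fixing $t\in[\,r-\tfrac12,\ r+\tfrac12\,]$, we then have $g(\gamma(c)) < t < g(\gamma(d))$, so the intermediate value theorem produces $p\in[c,d]$ with $g(\gamma(p)) = t$, which unwinds to $\pi_b(\gamma(p)) = b(t)$, as desired. If the labels $h^\pm$ happen to carry the opposite orientation, the identical argument applies with the two endpoint inequalities reversed, since $t$ still lies strictly between $g(\gamma(c))$ and $g(\gamma(d))$.

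The argument is short, so essentially the only point that warrants care is the identification of the half‑spaces with the sub‑ and super‑level sets of $g$: that ``$x$ lies on the side of $h$ reached before the pole'' is literally the inequality $g(x) < r-\tfrac12$ (and similarly for $y$). This is built into how $h^\pm$ are defined in \cite{PSZ22}; once that is in hand, no CAT(0) geometry beyond continuity of $\pi_b$ is needed, and the rest is a one‑line appeal to the intermediate value theorem.
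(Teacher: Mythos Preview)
Your proof is correct. Note, however, that the paper does not give its own proof of this lemma: it is quoted verbatim as Lemma~2.5 of \cite{PSZ22} in the preliminaries and used as a black box, so there is nothing in this paper to compare against. Your argument---compose the $1$-Lipschitz projection $\pi_b$ with the parametrization $b^{-1}$ to get a continuous real-valued map, then apply the intermediate value theorem---is exactly the natural proof and is what one would expect the original source to do as well.
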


In particular, if a geodesic $a$ crosses a curtain $h_{b,r}$, there exists some $p \in a$ such that $\pi_b (a(p)) = b(r)$.

\begin{definition}[$L$-separated, $L$-chain] \label{L chain}  Let $L \in \mathbb{N}$. Disjoint curtains $h$ and $h^{\prime}$ are said to be \textit{$L$-separated} if every chain meeting both $h$ and $h^{\prime}$ has cardinality at most $L$. Two disjoint curtains are said to be \textit{separated} if they are $L$-separated for some $L$. If $c$ is a chain of curtains such that each pair is $L$-separated, then we refer to $c$ as an \textit{$L$-chain}. See Figure \ref{L sep} for an example of $L$-separation.
\end{definition}

\begin{definition}[$L$-metric] \label{Lmetric} Denote $X_L$ for the metric space $(X, d_L)$, where $d_L$ is the metric defined as \begin{center}
     $\mathrm{d}_L(x, y)=1+\max \{|c|: c$ is an $L$-chain separating $x$ from $y\}$
\end{center}
with $d_L(x,x) = 0$. Note that, by Remark 2.16 in \cite{PSZ22}, we have that for any $x,y \in X$, it follows that $d_L(x,y) < 1 +d(x,y).$
\end{definition}

These $X_L$ spaces will be used as auxiliary spaces to define the curtain model (See Definition \ref{Curtain Model}).

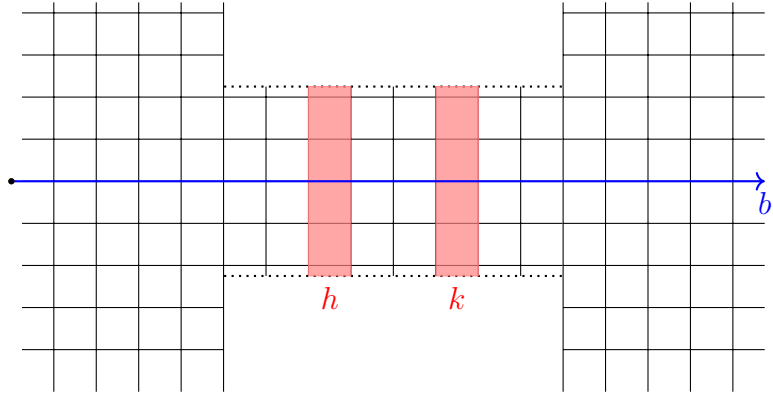
\begin{figure}[ht]
 \centering
\begin{tikzpicture}[scale = .7]

\draw[step=.8cm,black] (4.8+1.8,-3.2) grid (8+2.4,4.2);
\draw[step=.8cm,black] (8.01+2.4,-1) grid (14.39+2.4,2.6);
\draw[dotted,thick] (8.01+2.4, 2.6) to (14.39+2.4, 2.6);
\draw[dotted, thick] (8.01+2.4, -1) to (14.39+2.4, -1);
\draw[step=.8cm,black] (14.4+2.4,-3.2) grid (17.6+3,4.2);

\draw[red!50, fill = red!50, opacity=0.7  ] plot((9.6+2.4,-1) -- (10.4+2.4,-1) -- (10.4+2.4,2.6) -- (9.6+2.4, 2.6) -- cycle;
\draw[red!50, fill = red!50, opacity=0.7  ] plot((9.6+4.8,-1) -- (10.4+4.8,-1) -- (10.4+4.8,2.6) -- (9.6+4.8, 2.6) -- cycle;

\node [below, red] at (12.4,-1) {$h$};
\node [below, red] at (14.8,-1) {$k$};

\draw[step=.8cm, thick, blue, ->] (6.4,.8) to (20.6,.8);
\draw[fill] (6.4,.8) circle [radius=0.05];
\node [below, blue] at (20.6,.8) {$b$};
\end{tikzpicture}

    \caption{The above figure shows $\mathbb{R}^2$ with 2 strips cut out. We see the pair of curtains $h,k$ dual to the geodesic $b$ are 4-separated. A maximal chain that crosses both curtains would be 4 horizontal curtains. Thus, $\{h,k\}$ is a 4-chain. }
    \label{L sep}
\end{figure}

\begin{theorem}[Theorem 3.5 in \cite{PSZ22}]
\label{X_Lhyp}
For each $L < \infty$, the space $X_L$ is a quasigeodesic hyperbolic space. Moreover, $\text{Isom}\hspace{.1cm} X \leqslant \text{Isom} \hspace{.1cm} X_L$.
\end{theorem}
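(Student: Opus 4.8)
The plan is to dispatch the group inclusion by functoriality and then to read off the geometry of $X_L$ from the curtain combinatorics of \cite{PSZ22} via a guessing-geodesics argument. For $\operatorname{Isom} X \leqslant \operatorname{Isom} X_L$: an isometry $g$ of $X$ carries geodesics to geodesics and commutes with closest-point projection, so $g(h_{b,r}) = h_{gb,r}$; hence $g$ permutes curtains and their half-spaces and preserves the relations ``separates'', ``is a chain'', ``is an $L$-chain'', together with cardinalities of chains. Therefore $d_L(gx,gy) = d_L(x,y)$, and since an isometry of $X$ is determined by its action on the underlying set, $g \mapsto g$ is an injective homomorphism $\operatorname{Isom} X \hookrightarrow \operatorname{Isom} X_L$.

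For the geometry, the core claim is a ``chains live along the geodesic'' principle: every CAT(0) geodesic $b=[x,y]$, suitably reparametrized, is a $(\lambda,\lambda)$-quasigeodesic of $X_L$ for some $\lambda=\lambda(L)$. To see this, fix a maximal $L$-chain $c=\{h_1,\dots,h_n\}$ separating $x$ from $y$, so $d_L(x,y)=n+1$. Since each $h_i$ separates $x$ from $y$, Lemma~\ref{crossing} shows $b$ meets every $h_i$; and since $h_i$ separates $x$ from $h_{i+1}$, any path from $x$ into $h_{i+1}$ must first cross $h_i$, so the first-entry times $0\le t_1\le\dots\le t_n\le d(x,y)$ of $b$ into $h_1,\dots,h_n$ are monotone. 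Writing $z_i=b(t_i)$, $z_0=x$, $z_{n+1}=y$, the subchain $\{h_{i+1},\dots,h_{j-1}\}$ still separates $z_i$ from $z_j$, so $d_L(z_i,z_j)\ge j-i$. In the opposite direction, maximality of $c$ controls the intermediate scales: any $L$-chain separating two points of $b$ lying inside the arc $b[t_i,t_{i+1}]$ can be spliced between $h_i$ and $h_{i+1}$ inside $c$ (after discarding a bounded number of end curtains to make the splice legitimate), which by maximality of $|c|$ forces that chain to have length at most $\lambda(L)$; hence each arc of $b$ between consecutive $z_i$ has $d_L$-diameter at most $\lambda(L)$. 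Combining the two estimates, $b$ reparametrized so that $z_i$ sits at parameter $i$ is a uniform quasigeodesic of $X_L$; in particular $X_L$ is a quasigeodesic space.

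It remains to see that $X_L$ is hyperbolic, which I would get from a Bowditch-style guessing-geodesics criterion applied to the assignment $(x,y)\mapsto[x,y]$. The one hypothesis needing proof is uniform slimness of triangles: given $x,y,z$ and $w\in[x,y]$, choose a curtain $h$ of the maximal $L$-chain of $[x,y]$ that separates $x$ from $y$ and meets $b$ near $w$; since $[x,z]\cup[z,y]$ is a path from $x$ to $y$ it crosses $h$, hence meets $[x,z]$ or $[z,y]$, and running the same splicing/maximality bookkeeping across the $L$-chains of the three sides upgrades this crossing to a point of $[x,z]\cup[z,y]$ at $d_L$-distance at most $\lambda'(L)$ from $w$. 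The criterion then yields $\delta$-hyperbolicity of $X_L$ with $\delta=\delta(L)$.

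The main obstacle is entirely in the curtain combinatorics underlying the monotonicity and splicing steps. Because curtains are not convex, one must check by hand that the inward half-spaces of a chain genuinely nest, that a chain meeting two curtains which are separated by a third must also meet the third (a connectedness argument on disjoint ``slabs''), and that inserting a sub-chain into $c$ again produces an honest $L$-chain --- i.e.\ that $L$-separation survives concatenation up to a bounded error depending only on $L$. These structural facts about $L$-separated curtains, rather than any abstract hyperbolicity input, are where the real work lies; once they are in hand the guessing-geodesics wrapper is routine.
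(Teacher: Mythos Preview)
This statement is not proved in the present paper at all: it is quoted verbatim as Theorem~3.5 of \cite{PSZ22} and used as a black box. So there is no in-paper proof to compare against, and your write-up is really a sketch of how one might reconstruct the argument of \cite{PSZ22}.

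That said, your outline is in the right spirit and lines up with the pieces of \cite{PSZ22} that this paper does import. The $\operatorname{Isom} X \leqslant \operatorname{Isom} X_L$ paragraph is fine. For the quasigeodesic/hyperbolicity part, the two structural facts you flag as ``obstacles'' are exactly the content of Lemma~\ref{gluingLchain} (splicing two $L$-chains with a loss of $L+2$) and Lemma~\ref{backtrack} (an $L$-chain separating $b(t_2)$ from $\{b(t_1),b(t_3)\}$ has size at most $1+\lfloor L/2\rfloor$). With those in hand, your ``maximality forces bounded gap'' step and your slim-triangles step both go through, and a guessing-geodesics criterion finishes the job. One small correction: your bound on $d_L(z_i,z_{i+1})$ should come not from splicing a chain \emph{between} $h_i$ and $h_{i+1}$, but from applying Lemma~\ref{gluingLchain} to glue the candidate chain onto $\{h_1,\dots,h_i\}$ on one side and $\{h_{i+1},\dots,h_n\}$ on the other and then invoking maximality of $c$; as stated, your splice need not preserve $L$-separation with both neighbours simultaneously.
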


For a CAT(0) cube complex $X$, these $X_L$ spaces were first formed using hyperplanes instead of curtains and were also hyperbolic \cite{Gen20b}. With a sufficiently large $L$, the authors of \cite{IMZ21} use this hyperbolic space to continuously inject the sublinearly Morse boundary of $X$ into the Gromov boundary of $X_L$. However, they used the additional assumption that $X$ admits a \textit{factor system} (see \cite{BHS17}). This assumption makes these $X_L$ spaces equal for all $L\geq L_0$, where $L_0$ is some constant dependent on the hierarchical structure of $X$. Without these $X_L$ spaces stabilizing, it is possible for for $\K$-contracting geodesics to be of bounded diameter in each $X_L$ for all $L$. This the main motivation for projecting to the \textit{curtain model} (see Definition \ref{Curtain Model}) in Theorem \hyperlink{TheoremA}{A} instead of $X_L$ for some $L$.

\section{Curtain Characterization for Sublinear Contracting Rays} \label{Section 3}

This section focuses on creating a curtain characterization for $\K$-contracting rays. Such a characterization allows combinatorial arguments for the proofs building up to Theorem \hyperlink{TheoremA}{A} in Section \ref{Section 4}. Since we are working in a sublinear case, we define an sublinear analogue of an $L$-chain\hypertarget{Section3.1}{.}\\

\noindent 3.1 \textbf{A characterization via dual curtains.} \vspace{.25cm}

\begin{definition}[$\K$-chain, $\K$-curtain-excursion geodesic, excursion constant] A \textit{$\K$-chain} meeting some geodesic $b$ is a chain of curtains $\{h_i\}$ meeting $b$ at points $b(t_i)$ such that \begin{itemize}
    \item $t_{i+1}-t_i\leq C\K(t_{i+1})$
    \item $h_i$ and $h_{i+1}$ are $C\K(t_{i+1}$)-separated 
 \end{itemize}
for some $C>0$. If such $\{h_i\}$ are dual to $b$, then (up to a small increase in $C$) we choose $b(t_i)$ to be the centers of the poles of each $h_i$. A geodesic $b$ is a \textit{$\K$-curtain-excursion geodesic} when it is dual to a $\K$-chain. We refer to  $C$ as the \textit{excursion constant}.
\end{definition}

In the CAT(0) cube complex setting, a geodesic crossing a $\K$-chain of hyperplanes was defined as a $\K$-excursion geodesic in \cite{MQZ20}, so our above name of $\K$-curtain-excursion geodesics as a curtain analogue is fitting. This section works to prove Theorem \ref{curtainexcursion}, a dualized version of Theorem \hyperlink{TheoremC}{C} in the introduction. Subsection \hyperlink{Section3.2}{3.2} will recover all of Theorem \hyperlink{TheoremC}{C} by working with non-dual chains.

\begin{theorem}\label{curtainexcursion}
A geodesic ray $b$ is $\K$-contracting if and only if it is $\K$-curtain-excursion.
\end{theorem}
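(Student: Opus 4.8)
The plan is to prove the two directions separately, as the excerpt already announces (Propositions \ref{ContractingToExcursion} and the dual-chain consequences for the forward direction; the backward direction will go through a non-dual version). For the forward direction, I would start with a $\K$-contracting geodesic ray $b$ with contracting constant $C$, and build the $\K$-chain inductively. Suppose I have already found curtains $h_1, \dots, h_i$ dual to $b$, crossed at times $t_1 < \dots < t_i$, satisfying the two conditions. To produce $h_{i+1}$, I want to find a curtain dual to $b$ at some time $t_{i+1}$ with $t_{i+1} - t_i$ of order $\K(t_{i+1})$, such that $h_i$ and $h_{i+1}$ are $O(\K(t_{i+1}))$-separated. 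The key mechanism is the standard fact (in the cubical world, from \cite{MQZ20}; here one needs the curtain analogue, presumably available from \cite{PSZ22}) that if two curtains dual to $b$ are \emph{not} $L$-separated, then there is a long chain meeting both, and hence a point far from $b$ whose projection to $b$ has large diameter — contradicting $\K$-contraction once the separation time is large compared to $\K$ at that location. So the real content is a quantitative statement: if $b$ is $\K$-contracting with constant $C$, and $h, h'$ are curtains dual to $b$ crossed at times $s < s'$, then $h$ and $h'$ are $(C'\K(s'))$-separated for a uniform $C'$, once $s' - s$ exceeds some multiple of $\K(s')$. Given that, I set $t_{i+1}$ to be the first time after $t_i + \lceil A\K(t_i)\rceil$ (for a suitable constant $A$) at which I can name a dual curtain — using Lemma~2.10 of \cite{PSZ22} to guarantee curtains dual to $[b(t_i),b(t_i+N)]$ exist — and use Lemma~\ref{3.2} to convert $\K(t_i)$ into a comparable multiple of $\K(t_{i+1})$, absorbing everything into a single excursion constant.

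For the backward direction, I would assume $b$ is $\K$-curtain-excursion, dual to a $\K$-chain $\{h_i\}$ with excursion constant $C$, crossing $b$ at times $t_i$, and show $b$ is $\K$-contracting. Take a ball $B = B(x, R)$ disjoint from $b$; I must bound $\operatorname{diam}(\pi_b(B))$ by $O(\K(x))$. Let $[p, q] = \pi_b(B)$ (or a near-extremal pair) and suppose for contradiction it is much longer than $\K$ of the relevant point. Then a large sub-collection of the $h_i$ have poles inside $[p,q]$: since consecutive $t_i$ grow by at most $C\K(t_{i+1})$, there are on the order of $(t_q - t_p)/\K$ such curtains. Each such $h_i$ separates $b(t_i - \tfrac12)$ from $b(t_i + \tfrac12)$, hence separates $p$-side from $q$-side of $b$; by Lemma~\ref{crossing}, any path from a point projecting near $p$ to a point projecting near $q$ crosses each such $h_i$. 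Now pick two points of $B$ projecting near the two ends of $[p,q]$; the geodesic between them stays in $B$ (convexity of $B$, CAT(0)), so it crosses all these $h_i$, and the geodesic from the near end to $x$ and back gives chains linking consecutive $h_i$'s through $B$. The $L$-separation hypothesis (with $L = C\K(t_{i+1})$) then caps how many mutually-separating curtains a bounded region can cross: roughly, the diameter of $B$ must be at least (number of curtains) divided by (separation constant), which forces $R \gtrsim (t_q - t_p)/\K^2$ or similar — and chasing constants with Lemma~\ref{3.2}, since $R$ is bounded by $d(x,b) \approx$ its projection location, this bounds $t_q - t_p$ by $O(\K(x))$ after rearrangement. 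I expect one will actually prove a cleaner intermediate statement first (the non-dual Proposition \ref{NonDualto Contracting}), namely that a geodesic merely \emph{meeting} a $\K$-chain is $\K$-contracting, and then note the dual case is a special case.

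The main obstacle, I expect, is the backward direction's quantitative bookkeeping: converting "$b$ crosses many $L$-separated curtains inside a ball of radius $R$" into "$R$ is large" with the right power of $\K$, while keeping all the $\K$-inputs comparable. The subtlety is that the separation constant $L = C\K(t_{i+1})$ itself varies along the chain and depends on the location, so one cannot simply invoke a single $X_L$ space; one must localize, use Lemma~\ref{3.2} to see that on the relevant window all the $\K(t_i)$ and $\K(x)$ are within bounded ratios, and only then argue as if $L$ were constant. A secondary technical point is handling curtains that meet $b$ versus curtains dual to $b$ — the excerpt flags that the standard cubical argument gives a weaker converse here, so presumably one avoids projecting the curtains and instead argues directly with Lemma~\ref{crossing} and convexity of balls in the CAT(0) space, which is the "different technique than prior literature" the outline alludes to. The forward direction's only real difficulty is verifying the quantitative non-separation-implies-large-projection claim for curtains rather than hyperplanes, but that should follow from the curtain separation lemmas in \cite{PSZ22} combined with Lemma~\ref{crossing}.
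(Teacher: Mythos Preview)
Your forward direction is essentially the paper's approach: dual curtains placed at spacing $\approx D\K$ along $b$, and separation established by showing any curtain $k$ meeting both $h_i$ and $h_{i+1}$ is forced close to a fixed point of $b$. The paper makes this precise via the pole of $k$ (star-convexity, Lemma~\ref{starconvexity}) and Lemma~\ref{4.14}, rather than by contradiction, but the mechanism you describe is the same one.

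Your backward direction, however, has a genuine gap. You try to verify $\K$-contraction directly from the ball definition: take $B=B(x,R)$ disjoint from $b$, suppose $\pi_b(B)$ is long, and argue that a geodesic $[u,v]\subset B$ crosses many curtains of the $\K$-chain. The trouble is the inequality you extract. Crossing $N$ curtains of a chain only gives $d(u,v)\geq N$, hence $2R\geq N$; combined with $N\approx (t_q-t_p)/(C\K)$ this yields $t_q-t_p\lesssim R\cdot\K$, not $\lesssim\K(x)$. The ``divided by separation constant'' step you invoke to reach $R\gtrsim (t_q-t_p)/\K^2$ has no justification---$L$-separation bounds chains crossing a \emph{pair} $h_i,h_{i+1}$, it does not shorten the geodesic $[u,v]$ further. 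And even granting $R\gtrsim (t_q-t_p)/\K^2$, you only get $t_q-t_p\lesssim d(x,b)\K^2$, which is still not contracting. The claim ``$R$ is bounded by $d(x,b)\approx$ its projection location'' is also false: $d(x,b)$ need not be comparable to $d(\ob,\pi_b(x))$.

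The paper avoids this by \emph{not} verifying the ball definition at all. It first proves a closeness lemma (Lemma~\ref{closeness}): any geodesic crossing three consecutive curtains $h_{i-1},h_i,h_{i+1}$ of the $\K$-chain comes within $D\K(t_i)$ of $b(t_i)$. This is a quadrilateral/chain-counting argument using only the separation bounds, with no contracting hypothesis. Then it shows $b$ is $\K$-\emph{slim}: for any $x\in X$ and $y\in b$, the geodesic $[x,y]$ crosses the relevant $h_i$'s and hence passes near $\pi_b(x)$. Finally it invokes the known equivalence $\K$-slim $\Leftrightarrow$ $\K$-contracting (Lemma~\ref{kslim}). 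The detour through $\K$-slim is exactly what makes the constants close; your direct ball argument does not.
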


The forward direction is Proposition \ref{ContractingToExcursion} whereas the backward direction is Proposition \ref{ExcursionToContracting}. Before we prove the forward direction, we recall the following two lemmas.

\begin{lemma}[Lemma 2.6 in \cite{PSZ22}]
\label{starconvexity}
Let $h$ be a curtain with pole $P$. For every $x \in h$, the geodesic $\left[x, \pi_P (x)\right]$ is contained in $h$. In particular, $h$ is path-connected.
\end{lemma}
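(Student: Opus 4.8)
The plan is to deduce the statement from two facts: the identification of $\pi_P(x)$ with $\pi_b(x)$ for points of $h$, and the standard CAT(0) principle that nearest-point projection onto a convex set is constant along the geodesic joining a point to its projection.

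First I would record that if $x \in h$ then $\pi_P(x) = \pi_b(x)$. Indeed, by definition of the curtain $h = \pi_b^{-1}(P)$ we have $\pi_b(x) \in P$, and since $P$ is a subsegment of the geodesic $b$, for every $y \in P$ we get $d(x,y) \ge d(x,b) = d(x,\pi_b(x))$, with equality only at $y = \pi_b(x)$ by uniqueness of nearest-point projection in CAT(0) spaces. Hence $\pi_b(x)$ is also the unique nearest point of $P$ to $x$. So it suffices to prove $[x,\pi_b(x)] \subseteq h$, i.e.\ that $\pi_b(z) = \pi_b(x)$ for every $z$ on the geodesic $[x,\pi_b(x)]$.

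For the latter, fix such a $z$; we may assume $z$ is strictly between $x$ and $p := \pi_b(x)$, the endpoints being trivial. Since $z$ lies on $[x,p]$ we have $d(x,z) + d(z,p) = d(x,p) = d(x,b)$. Let $w = \pi_b(z)$. Because $p \in b$, we have $d(z,w) = d(z,b) \le d(z,p)$, and then the triangle inequality gives $d(x,b) \le d(x,w) \le d(x,z) + d(z,w) \le d(x,z) + d(z,p) = d(x,b)$. So all of these are equalities; in particular $w \in b$ realizes $d(x,b)$, and uniqueness of the projection forces $w = p$. Thus $\pi_b(z) = p \in P$, so $z \in \pi_b^{-1}(P) = h$, giving $[x,\pi_b(x)] = [x,\pi_P(x)] \subseteq h$.

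Finally, for path-connectedness I would observe that $P \subseteq h$ (each point of $P$ is fixed by $\pi_b$ and lies in $P$) and that $P$, a geodesic segment, is path-connected. Given any $x,y \in h$, concatenate the geodesic $[x,\pi_P(x)]$, a path inside $P$ from $\pi_P(x)$ to $\pi_P(y)$, and the geodesic $[\pi_P(y),y]$; by the previous paragraph the two geodesic pieces stay in $h$ and the middle piece lies in $P \subseteq h$, so this is a path in $h$ from $x$ to $y$. I do not expect a genuine obstacle: the only points needing care are the reduction $\pi_P = \pi_b$ on $h$ and a clean invocation of uniqueness of nearest-point projection (part (2) of the recalled properties of CAT(0) spaces), both of which are routine.
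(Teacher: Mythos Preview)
The paper does not prove this lemma: it is imported verbatim from \cite{PSZ22} and stated without argument, so there is no in-paper proof to compare against. Your proof is correct and is essentially the standard one: the identification $\pi_P(x)=\pi_b(x)$ for $x\in h$, the equality-chain argument forcing $\pi_b(z)=\pi_b(x)$ along $[x,\pi_b(x)]$, and the concatenation through $P$ for path-connectedness are exactly what one expects, and each step is justified cleanly by uniqueness of nearest-point projection in CAT(0) spaces.
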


\begin{lemma}[Lemma 4.14 in \cite{MQZ20}]
\label{4.14}
Assume $X$ is a CAT(0) space. Let $b$ be a $\K$-contracting geodesic ray with contracting constant $D$ starting at $\ob$, and let $x,y\in X$ and not in $b$ such that $d(\ob, \pi_b(x)) \leq d(\ob, \pi_b(y))$. If the projection of $[x,y]$ to $b$ is larger than $4D\K(\pi_b(y))$, then $\pi_b([x,y]) \subseteq N_{5D\K(\pi_b(y))}([x,y]).$

\end{lemma}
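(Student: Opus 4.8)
The plan is a short convexity reduction together with one main estimate — a ``bounded geodesic image'' phenomenon for $\K$-contracting geodesics — which is itself proved by contradiction. Throughout write $\rho=\K(\pi_b(y))$ and $\gamma=[x,y]$. I would first record the routine facts: $\pi_b$ is $1$-Lipschitz, so $\pi_b(\gamma)$ is a subsegment $b[s_0,t_0]$ of $b$ containing $\pi_b(x)$ and $\pi_b(y)$, of length $>4D\rho$ by hypothesis; the convention $d(\ob,\pi_b(x))\le d(\ob,\pi_b(y))$ together with monotonicity of $\K$ gives $\K(\pi_b(x))\le\rho$; and Lemma \ref{3.2} together with concavity of $\K$ makes $\K$ comparable to $\rho$ at any point within a bounded multiple of $\rho$ of the relevant portion of $b$ — establishing this last comparison for the endpoints $b(s_0),b(t_0)$ (equivalently, that the projection does not ``overshoot'' $\pi_b(x)$ or $\pi_b(y)$ by more than $O(D\rho)$) is itself part of the work.

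\textbf{Convexity reduction.} Since $[x,y]$ is convex, the function $u\mapsto d(b(u),[x,y])$ is convex along $b$, so to prove $b[s_0,t_0]\subseteq N_{5D\rho}([x,y])$ it suffices to bound it at the two endpoints, and by symmetry (using $\K(\pi_b(x))\le\rho$ on the $s_0$ side) I treat only $b(t_0)$. Pick $w_0,z_0\in[x,y]$ with $\pi_b(w_0)=b(t_0)$, $\pi_b(z_0)=b(s_0)$, and let $\sigma\subseteq[x,y]$ be the subsegment they span; since $\sigma$ is connected with $\pi_b(\sigma)\subseteq b[s_0,t_0]$ containing both endpoints, $\pi_b(\sigma)=b[s_0,t_0]$, with the endpoints of $\sigma$ realizing the extremes.

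\textbf{Main estimate.} I would isolate the statement: \emph{a geodesic segment whose $\pi_b$-image is an interval of length $>4D\rho$ must meet $N_{2D\rho}(b)$}, and, tracking the argument, the first point of $\sigma$ (coming from $w_0$) lying in $N_{2D\rho}(b)$ projects to within $D\rho$ of $b(t_0)$. Granting this one obtains $p\in\sigma$ with $d(p,b)\le 2D\rho$ and $d(\pi_b(p),b(t_0))\le D\rho$, whence $d(b(t_0),[x,y])\le d(b(t_0),\pi_b(p))+d(\pi_b(p),p)\le 3D\rho\le 5D\rho$, completing the proof. The estimate is proved by walking along $\sigma$ and applying $\K$-contraction to the ball $B(q,d(q,b))$ at each visited point $q$: this ball is disjoint from $b$, so its $\pi_b$-image has diameter $<D\K(q)$. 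If $\sigma$ never entered $N_{2D\rho}(b)$, one chains these estimates to bound $\operatorname{diam}\pi_b(\sigma)$: by convexity of $t\mapsto d(\gamma(t),b)$ this distance function is $V$-shaped, and a subsegment of $\sigma$ lying at (roughly) constant distance $R$ from $b$ spans a flat rectangle of height $R$, to which $\K$-contraction applied to a ball of radius just below $R$ forces $R\lesssim D\K(\cdot)\lesssim D\rho$; hence $\sigma$ can advance its projection only while inside $N_{O(D\rho)}(b)$, contradicting $\operatorname{diam}\pi_b(\sigma)=t_0-s_0>4D\rho$.

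\textbf{Main obstacle.} The difficulty is exactly this last estimate. A naive covering of $\sigma$ by balls tangent to $b$ fails, because the number of balls needed scales with the length of $\sigma$ rather than with its distance from $b$, so the resulting bound on $\operatorname{diam}\pi_b(\sigma)$ is useless; the essential point is to use convexity of $t\mapsto d(\gamma(t),b)$ to see that $\sigma$ ``tracks'' $b$ — and hence moves its projection — only where it is within $O(D\K)$ of $b$, so the traversal of the long interval $b[s_0,t_0]$ takes place entirely at bounded distance from $b$. Converting this picture into the stated inequalities while keeping every occurrence of $\K$ inside the range controlled by Lemma \ref{3.2} (in particular handling the possible overshoot past $\pi_b(x),\pi_b(y)$) is where essentially all the effort lies; the convexity reduction above is short by comparison.
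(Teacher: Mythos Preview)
The paper does not prove this lemma; it is quoted verbatim from \cite{MQZ20} (as the attribution ``Lemma 4.14 in \cite{MQZ20}'' indicates) and used as a black box in the proof of Proposition~\ref{ContractingToExcursion}. There is therefore no in-paper proof to compare against.

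On the merits of your proposal: the convexity reduction is correct and is a clean first step---the function $u\mapsto d(b(u),[x,y])$ is indeed convex, so bounding it at the endpoints $b(s_0),b(t_0)$ of the projected interval suffices. The difficulty, as you explicitly flag in your ``Main obstacle'' paragraph, is entirely in the main estimate, and your sketch there does not close the gap. The ``flat rectangle'' picture is not available in a general CAT(0) space, and the assertion that $\sigma$ ``moves its projection only where it is within $O(D\K)$ of $b$'' is exactly what needs to be proved: while $\sigma$ is far from $b$, the balls $B(q,d(q,b))$ give projection-diameter bounds of the form $D\K(q)$, but $\K(q)$ is evaluated at $q$, which may be far from $b$ and hence far from the range where Lemma~\ref{3.2} lets you compare $\K(q)$ to $\rho$. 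Chaining such balls gives a sum of uncontrolled terms, not a bound by a fixed multiple of $\rho$. The way \cite{MQZ20} handles this is to exploit the \emph{other} convexity---that of $t\mapsto d(\gamma(t),b)$---more directly: this function is $V$-shaped, so a \emph{single} well-placed ball (at the minimizer, or at the last point of $\sigma$ outside a suitable neighborhood of $b$) already controls enough of the projection, and one does not need an unbounded chain. Your outline is in the right direction, but the argument as written stops short of the essential step.
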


Lemma \ref{starconvexity} tells us that for any curtain $h$ and any $x\in h$, there will exist a geodesic in $h$ connecting $x$ to the pole of $h$, and Lemma \ref{4.14} gives us that geodesics with large projections to a $\K$-contracting ray must get sublinearly close to the $\K$-contracting ray.

\begin{proposition} \label{ContractingToExcursion}
  Let $b$ be a $\K$-contracting ray with contracting constant $D$, then there exists $t_i \in \mathbb R$ such that $b$ is dual to a $\K$-chain $\{h_i\}$ at points $b(t_i) \in h_i$ and
\begin{itemize}
    \item $t_{i+1}-t_i\leq C\K(t_{i+1})$
     \item $h_i$ and $h_{i+1}$ are $C\K(t_{i+1}$)-separated 
 \end{itemize}for some $C\geq 0$ depending only on $D.$ In other words, $b$ is a $\K$-curtain-excursion geodesic.
\end{proposition}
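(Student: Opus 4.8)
The plan is to build the chain $\{h_i\}$ inductively along $b$, choosing each new curtain far enough along $b$ that the required separation can be certified via the contracting property. First I would set up the recursion for the parameters $t_i$: given $t_i$, let $t_{i+1}$ be roughly $t_i + C\kappa(t_{i+1})$ for a constant $C$ to be fixed (using that $\kappa$ is sublinear, concave, and monotone so this equation has a solution, and using Lemma~\ref{3.2} to freely interchange $\kappa(t_i)$ and $\kappa(t_{i+1})$ up to multiplicative constants). Then take $h_i = h_{b, t_i}$ to be the curtain dual to $b$ at $t_i$, so that $b(t_i)$ is the center of its pole and $b$ crosses each $h_i$; disjointness of consecutive curtains and the chain/separation-of-the-others conditions for a chain are immediate from the ordering along $b$ once the spacing $t_{i+1}-t_i$ is at least $1$, which sublinearity lets us arrange by starting far enough out (or by a bounded initial adjustment). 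The first bulleted condition $t_{i+1}-t_i \le C\kappa(t_{i+1})$ holds by construction.

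The substantive step is the second bullet: showing $h_i$ and $h_{i+1}$ are $C\kappa(t_{i+1})$-separated, i.e. that any chain $c$ meeting both $h_i$ and $h_{i+1}$ has $|c| \le C\kappa(t_{i+1})$. Here is where Lemma~\ref{4.14} and the contracting constant $D$ enter. Suppose $c = \{k_j\}$ is such a chain; pick $x \in h_i$ and $y \in h_{i+1}$ that are also in the first and last curtains of $c$ respectively (or points witnessing that $c$ crosses both). Using Lemma~\ref{starconvexity}, $x$ is joined within $h_i$ to a point of the pole of $h_i$ (near $b(t_i)$), and similarly $y$ is joined within $h_{i+1}$ to a point near $b(t_{i+1})$; concatenating these with $[x,y]$ produces a path whose projection to $b$ covers at least the interval $[t_i, t_{i+1}]$ up to an additive constant. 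If this projection-length exceeds $4D\kappa(\pi_b(y)) \asymp 4D\kappa(t_{i+1})$ — which we can force to fail by choosing $C$ small relative to $D$, or conversely if it holds we are in the regime to apply Lemma~\ref{4.14} — we get $\pi_b([x,y]) \subseteq \mathcal{N}_{5D\kappa(t_{i+1})}([x,y])$. Combined with the fact (Definition~\ref{Lmetric}, Remark 2.16 of~\cite{PSZ22}) that a chain of cardinality $|c|$ separating $x$ from $y$ forces $d(x,y) \ge |c| - 1$ roughly, and that $[x,y]$ staying $O(\kappa(t_{i+1}))$-close to $b$ while its endpoints project $O(\kappa(t_{i+1}))$-far apart bounds $d(x,y)$ by $O(\kappa(t_{i+1}))$, we conclude $|c| \le C'\kappa(t_{i+1})$ for a constant $C'$ depending only on $D$. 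Absorbing constants and using Lemma~\ref{3.2} once more to normalize the $\kappa$-inputs gives both bullets with a single $C$ depending only on $D$.

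The main obstacle I anticipate is controlling the geometry at the transition between the curtains and the interpolating geodesic segment $[x,y]$: curtains are not convex (Remark following Definition~\ref{curtain}), so the path built from Lemma~\ref{starconvexity} need not be a geodesic, and one must argue carefully that its projection to $b$ still sweeps out the full interval $[t_i,t_{i+1}]$ (this is where Lemma~\ref{crossing} is used, since $b$ crossing $h_i$ forces $\pi_b$ of anything connecting the two sides of $h_i$ to hit $b(t_i)$) and that Lemma~\ref{4.14}'s hypothesis on the projection length is genuinely met. A secondary technical point is making the bootstrapping of constants clean: the separation bound is stated in terms of $\kappa(t_{i+1})$ but Lemma~\ref{4.14} naturally produces $\kappa(\pi_b(y))$, and $\pi_b(y)$ lies within a bounded multiple of $\kappa$ of $b(t_{i+1})$, so Lemma~\ref{3.2} is exactly the tool to reconcile them — but one should verify the quantifiers ($D_0$ chosen first, then $D_1,D_2$) line up with the inductive construction rather than circularly.
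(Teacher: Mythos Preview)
There is a genuine gap in the separation step. Your claim that ``$[x,y]$ staying $O(\K(t_{i+1}))$-close to $b$ while its endpoints project $O(\K(t_{i+1}))$-far apart bounds $d(x,y)$ by $O(\K(t_{i+1}))$'' is not justified: Lemma~\ref{4.14} yields the inclusion $\pi_b([x,y]) \subseteq \mathcal{N}_{5D\K}([x,y])$, i.e.\ points of the \emph{projection} are close to $[x,y]$, not that $[x,y]$ stays close to $b$. In fact $d(x,y)$ cannot be bounded from this data: the points $x \in h_i \cap k_1$ and $y \in h_{i+1} \cap k_{|c|}$ can lie arbitrarily far from $b$ (picture a tree with long rays attached along a contracting axis), so even though $\pi_b(x)$ and $\pi_b(y)$ are $O(\K(t_{i+1}))$ apart, $d(x,y)$ need not be. Your use of star-convexity of $h_i$ and $h_{i+1}$ only builds a long path from $x$ through $b$ to $y$; its length is uncontrolled, and it contributes nothing to $\pi_b([x,y])$ beyond what already follows from $x \in h_i$, $y \in h_{i+1}$.

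The paper's argument applies star-convexity to the \emph{crossing} curtain $k$ rather than to $h_i$ and $h_{i+1}$. For a single curtain $k$ meeting both $h_i$ and $h_{i+1}$, pick $x \in h_i \cap k$, $y \in h_{i+1} \cap k$, and join each to the pole $P$ of $k$ by geodesics $[x,x'],[y,y']$ lying inside $k$ (Lemma~\ref{starconvexity}). Since $\pi_b(P)$ has diameter at most $1$ while the concatenation $[x,x']*[x',y']*[y',y]$ must project onto an interval of length roughly $10D\K(t_{i+1})$, one of the two radial segments has projection exceeding $4D\K(t_{i+1})$; applying Lemma~\ref{4.14} to \emph{that} segment, which lies entirely in $k$, shows the midpoint $b(t_i + 5D\K(t_{i+1}))$ is within $5D\K(t_{i+1})+1$ of $k$. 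The crucial point is that this same midpoint works for every such $k$, so every curtain in a chain crossing both $h_i$ and $h_{i+1}$ passes within $O(\K(t_{i+1}))$ of a single fixed point of $b$, and this is what bounds the chain's cardinality.
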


 \begin{proof} Since $\K$ is a sublinear function, we can choose some $t_0$ such that $\K(t) \leq t$ for all $t\geq t_0$. For $i \in \mathbb{Z}_{\geq0}$, choose $t_{i+1}$ such that \begin{center}
     $t_{i+1} - t_i = 10D\K(t_{i+1})$.
 \end{center}
 
 Note this is possible to do by the nature of $\K$ being sublinear. Consider the chain $\{h_i = h_{b,i}\}$.  What is left is to show $\{h_i \}$ have the second condition of a $\K$-chain. Let $k$ be a curtain meeting both $h_i$ and $h_{i+1}$ and let $P$ be its pole. Notice $\pi_b(P)$ has diameter less than 1 since $\pi_b$ is $1-$Lipschitz.  Let $x \in h_i\cap k$, $y \in h_{i+1} \cap k$. There exists  $x',y' \in P$ such that $[x,x'],[y,y'] \subset k$ by Lemma \ref{starconvexity}. Namely, $x' = \pi_P(x)$ and $y'=\pi_P(y).$ Now, the projection of the concatenation $[x,x']*[x',y']*[y',y]$ onto $b$ will have diameter greater than $10D\K(t_{i+1}) - 1$. Since $\pi_b([x',y'])\subset \pi_b(P)$ has diameter less than 1, it must be that at least one of the projections of $[x,x']$ or $ [y,y']$ has diameter greater than $4D\K(t_{i+1})$. See Figure \ref{fig Pole}.
 
 Without loss of generality, we assume $d(\pi_b(x), \pi_b(x')) \geq 4D\K(t_{i+1})$. Thus,  there exists a $p\in[t_i +5D\K(t_{i+1}) - 1, t_i+ 5D\K(t_{i+1})+1]$ such that $p\notin \pi_b(P)$, but also $p \in \pi_b([x,x'])$. By Lemma \ref{4.14}, we have that $p$ is within $5D\K(t_{i+1}) $ of $[x,x'] \subset k$. Hence, $b(t_i+5D\K(t_{i+1}))$ is within $5D\K(t_{i+1})+1$ of $k.$ Since this is true for any curtain  $k$ meeting both $h_i$ and $h_{i+1}$, any chain that meets $h_i$ and $h_{i+1}$ must be bounded by $10D\K(t_{i+1}) +3$. This gives the well-separation bound for $\K$-excursion. 
 \end{proof}
\begin{figure}
    \centering
    \begin{tikzpicture}[scale=1.1]



    \draw[blue!50, fill = blue!50, opacity=0.4  ] plot((4.7+.4+.3,0) -- (5.7+.4+.3,.8) -- (10+.3,1) -- (9.8+.3,-.2) -- cycle;
    \draw[blue!50, fill = blue!50, opacity=0.4 ] plot((4.7+.4+.3,0) -- (5.7+.4+.3,.8) -- (1.5+.3,1) -- (1.3+.3,-.2) -- cycle;

        \fill[red, opacity=0.2] (.8+1.3,-3) .. controls(1+1.3,-2.5)  and (1+1.3,.5)   .. (.8+1.3,1.5) -- (1.8+1.3,1.5).. controls (1.6+1.3, .5)  and (1.6+1.3, -2.5) .. (1.8+1.3,-3) -- cycle;

        \fill[red, opacity=0.2] (.8+1.3+6,-3) .. controls(1+1.3+6,-2.5)  and (1+1.3+6,.5)   .. (.8+1.3+6,1.5) -- (1.8+1.3+6,1.5).. controls (1.6+1.3+6, .5)  and (1.6+1.3+6, -2.5) .. (1.8+1.3+6,-3) -- cycle;

    \draw[blue] (4.7+.4+.3,0) to (5.7+.4+.3,.8);
    \draw [,thick, ->](1,-2) to (10,-2);
    \node [below] at (10,-2) {$b$};

    \draw[fill, opacity=0.5 ] (2.7,.7) circle [radius=0.05];
    \node [left] at (2.7,.7) {\small $x$};

    \draw[fill, opacity=0.5] (8.4+.3,.1) circle [radius=0.05];
    \node [right, opacity=0.5] at (8.4+.3,.1) {\small $y$};

    \draw[fill, opacity=0.5] (5.35+.3,.2) circle [radius=0.05];
    \node [right, opacity=0.5] at (5.35+.3,.2) {\small $x'$};

    \draw[fill, opacity=0.5] (5.8+.35,.6) circle [radius=0.05];
    \node [left, opacity=0.5] at (5.85+.3,.6) {\small $y'$};

    \draw[ opacity=0.5] (2.7,.7) to (5.35+.3, .2);
    \draw[ opacity=0.5] (8.4+.3,.1) to (5.85+.3,.6);

    \node  at (4.7,-2) { $[$};
    \node  at (6.5,-2) { $]$};

    \node[blue, above] at (6.4, .8) {$P$};
    \node[blue, left] at (1.6,.4) {$k$};
    \node[red, above] at (2.6,1.5) {$h_i$};
    \node[red, above] at (8.6,1.5) {$h_{i+1}$};

    \node[below] at (2.6, -2) {$b(t_i)$};
    \draw[fill] (2.6,-2) circle [radius=0.05];

    \node[below] at (8.6, -2) {$b(t_{i+1})$};
    \draw[fill] (8.6,-2) circle [radius=0.05];

    \draw[blue!50, fill = blue!50, opacity=0.5  ] plot((5.4,-2.05) -- (5.4,-1.95) -- (6.4,-1.95) -- (6.4,-2.05) -- cycle;
    \node[below, blue] at (5.9,-2) { \small $\pi_a(P)$};

    \end{tikzpicture}
    \caption{A picture of the argument of Proposition \ref{ContractingToExcursion}. We have $h_i$ and $h_{i+1}$ are curtains dual to $b$ such that $d\left(b\left(t_i\right),b\left(t_{i+1}\right)\right) = 10D\K(t_{i+1})$, $k$ is a curtain crossing both $h_i$ and $h_{i+1}$, and $P$ is the pole of $k$. Note that, no matter where $P$ projects down to $b$, $\pi_b\left(\left[x,x'\right]*\left[x',y'\right]*\left[y',y\right]\right)$ will contain $\left[b\left(t_i+\frac{1}{2}\right), b\left(t_{i+1}-\frac{1}{2}\right)\right]$ and will be of length at least $10\K(t_{i+1}) - 1.$}
    \label{fig Pole}
\end{figure}
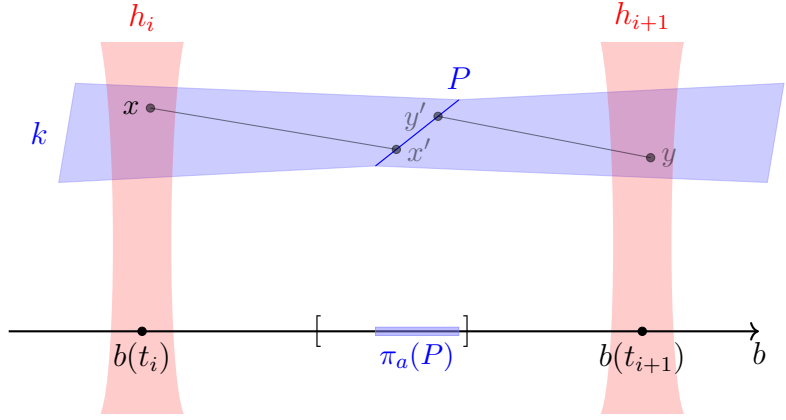

 \begin{remark}
 We have to double our bound since it is possible to have two chains of length $5D\K(t_{i+1})+1$ that are disjoint from each other. For example, in Figure \ref{fig Pole} there can be one chain ``above" $b$ and another chain ``below" $b$.
\end{remark}
 The above argument follows from the argument of \cite[Theorem 4.2]{PSZ22}, which showed that a $D$-contracting ray has a $(10D+3)$-chain of curtains dual to the ray. The reverse direction follows arguments in \cite{MQZ20}. The following lemma shows that certain curtains do not create bigons, and it will be used in Lemma \ref{closeness}.

 \begin{lemma}[Lemma 2.7 in \cite{PSZ22}]
\label{nobigons}
Let $b=\left[x_1, x_3\right]$ be a geodesic and let $x \notin b$. For any $x_2 \in b$, if $h$ is a curtain dual to $\left[x_2, x\right]$ that meets $\left[x_1, x_2\right]$, then $h$ does not meet $\left[x_2, x_3\right]$.
\end{lemma}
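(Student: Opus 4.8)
The plan is to translate each of the two hypotheses ``$h$ meets $[x_1,x_2]$'' and ``$h$ meets $[x_2,x_3]$'' into a statement about the Alexandrov angle at $x_2$ between the geodesic $[x_2,x]$ and the corresponding half of $b$, and then to observe that these two statements cannot both hold. Write $h=h_{[x_2,x],r}$ with pole $P=[x_2,x]\bigl[r-\tfrac12,r+\tfrac12\bigr]$; since the pole lies in the interior of $[x_2,x]$ we have $r-\tfrac12>0$, so $x_2=[x_2,x](0)\notin P$ and in particular $x_2\notin h$. The cases $x_2\in\{x_1,x_3\}$ are immediate, since then one of the two segments is the single point $x_2$, which $h$ does not meet; so I would assume $x_2$ is interior to $b=[x_1,x_3]$, so that $[x_2,x_1]$ and $[x_2,x_3]$ issue from $x_2$ in opposite directions and $\angle_{x_2}(x_1,x_3)=\pi$.

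The crux is the reduction: if $z\in[x_1,x_2]\cap h$ then $\angle_{x_2}(x_1,x)<\pi/2$. Indeed $z\neq x_2$, and $\pi_{[x_2,x]}(z)\in P$, so the nearest point of $[x_2,x]$ to $z$ is not the endpoint $x_2$. A standard fact about CAT(0) spaces (see \cite{BH99}) is that the nearest-point projection of $z$ onto a geodesic issuing from $x_2$ equals $x_2$ exactly when $\angle_{x_2}(z,x)\geq\pi/2$; one way to see this is that $t\mapsto d\bigl(z,[x_2,x](t)\bigr)$ is convex with right derivative $-\cos\angle_{x_2}(z,x)$ at $0$ by the first variation formula, so it fails to be minimised at $0$ precisely when $\angle_{x_2}(z,x)<\pi/2$. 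Hence $\angle_{x_2}(z,x)<\pi/2$, and since $z\in[x_1,x_2]$ the geodesic $[x_2,z]$ is an initial segment of $[x_2,x_1]$, so $\angle_{x_2}(z,x)=\angle_{x_2}(x_1,x)$. Running the same argument on the other side, if $h$ also met $[x_2,x_3]$ we would get $\angle_{x_2}(x_3,x)<\pi/2$. But then the triangle inequality for Alexandrov angles forces $\pi=\angle_{x_2}(x_1,x_3)\leq\angle_{x_2}(x_1,x)+\angle_{x_2}(x,x_3)<\pi$, a contradiction; so $h$ meets at most one of $[x_1,x_2]$ and $[x_2,x_3]$, which is the claim.

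The step I expect to need the most care is the borderline angle $\pi/2$ in the projection characterisation, i.e.\ that $\angle_{x_2}(z,x)=\pi/2$ already forces $\pi_{[x_2,x]}(z)=x_2$. If one wishes to avoid the first variation formula, one can instead apply the projection lemma in the triangle $\triangle\bigl(x_2,z,\pi_{[x_2,x]}(z)\bigr)$ together with the CAT(0) bound ``angle sum $\leq\pi$''; this gives only the non-strict estimate $\angle_{x_2}(x_1,x)\leq\pi/2$, which with the analogous estimate on the other side forces $\angle_{x_2}(x_1,x)=\angle_{x_2}(x_3,x)=\pi/2$. In that case $\triangle\bigl(x_2,z,\pi_{[x_2,x]}(z)\bigr)$ has angle sum $\pi$, hence by the Flat Triangle Lemma is isometric to a Euclidean triangle, but its angles are then $\pi/2$ at $x_2$, $\pi/2$ at the projection point, and $0$ at $z$ --- a combination no Euclidean triangle admits --- so this case is impossible. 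Either packaging closes the argument, the first variation route being the shorter one.
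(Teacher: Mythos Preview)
Your argument is correct. The key reduction---that a point $z$ with $\pi_{[x_2,x]}(z)\neq x_2$ must make Alexandrov angle strictly less than $\pi/2$ with $[x_2,x]$ at $x_2$---follows cleanly from the first variation formula together with convexity of the distance function and uniqueness of nearest-point projection in CAT(0) spaces; the triangle inequality for Alexandrov angles then finishes things exactly as you say. Your treatment of the boundary case $\angle_{x_2}=\pi/2$ is also fine: convexity plus $f'_+(0)=0$ forces $f$ to be nondecreasing, and uniqueness of the projection rules out $f$ being constant on any initial interval, so the minimum is attained only at $0$.

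There is nothing to compare against in this paper: the lemma is quoted from \cite{PSZ22} without proof here. For what it is worth, the original argument in \cite{PSZ22} proceeds along the same lines as yours, using the characterisation of nearest-point projection to a geodesic in terms of the Alexandrov angle at the endpoint. Your write-up is essentially a fleshed-out version of that idea, and the first-variation packaging you chose is the cleanest way to handle the borderline angle; the alternative Flat Triangle route you sketch also works but is unnecessary once the strict inequality is in hand.
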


 \begin{lemma}
 \label{closeness}
 Let $b$ be a $\K$-curtain-excursion geodesic ray with excursion constant $C>0$ and  $\K$-chain $\{h_i\}$.  If $a$ is another geodesic and crosses $h_{i-1}, h_i, h_{i+1}$ with  $\pi_b(a(s_i)) = b(t_i)$ for some $s_i >0$, then there exists a $D>0$ depending only on $C$ such that $d(a(s_i), b(t_{i})) \leq D\K(t_{i})$. 
 \end{lemma}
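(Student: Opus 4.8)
The plan is to bound $d\bigl(a(s_i),b(t_i)\bigr)$ by counting the curtains of a maximal chain dual to the segment $[a(s_i),b(t_i)]$ and showing that each of them is pinned down by $h_i$ and by the separated pair $\{h_{i-1},h_{i+1}\}$. Write $x=a(s_i)$ and $y=b(t_i)$. Since $\pi_b(x)=b(t_i)$ is the centre of the pole of $h_i$, we have $x\in h_i$, and of course $y\in h_i$. From the chain structure the half-spaces are nested, so $h_i\subseteq h_{i-1}^{+}\cap h_{i+1}^{-}$ in the orientation for which $\ob$ lies in every $h_j^{-}$. Because $a$ starts at $\ob$, crosses all three curtains, and crosses each curtain at most once (a consequence of the no-bigon property, cf.\ Lemma \ref{nobigons} and \cite{PSZ}), it must meet $h_{i-1}$ at some $a(u)$ with $u<s_i$ and $h_{i+1}$ at some $a(v)$ with $v>s_i$; hence $x\in h_{i-1}^{+}\cap h_{i+1}^{-}$. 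The same membership holds for $y=b(t_i)$ because $t_{i-1}<t_i<t_{i+1}$. Thus $x$ and $y$ both lie in the ``slab'' $S:=h_{i-1}^{+}\cap h_{i+1}^{-}$.

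Now I reduce to counting. By Lemma 2.10 of \cite{PSZ} there is a chain $c$ of curtains dual to $[x,y]$ with $d(x,y)\le 1+|c|$, so it is enough to bound $|c|$. Partition $c$ into $A$ (curtains crossing $h_{i-1}$), $B$ (curtains crossing $h_{i+1}$ but not $h_{i-1}$), and $c_0$ (curtains crossing neither). The first point is that \emph{every} $k\in c$ crosses $h_i$: since $k$ is dual to $[x,y]$ it separates $x$ from $y$, so if $k$ were disjoint from $h_i$ then the path-connected set $h_i$ (Lemma \ref{starconvexity}) would lie in a single half-space of $k$, contradicting that $x,y\in h_i$ lie on opposite sides of $k$. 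Consequently every $k\in A$ crosses both $h_{i-1}$ and $h_i$, so $A$ is a chain crossing both; as $h_{i-1}$ and $h_i$ are $C\K(t_i)$-separated, $|A|\le C\K(t_i)$. Symmetrically every $k\in B$ crosses $h_{i+1}$ and $h_i$, giving $|B|\le C\K(t_{i+1})$.

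It remains to control $c_0$, which is the delicate step. For $k\in c_0$: since $k$ misses the curtains $h_{i-1},h_{i+1}$ and is path-connected, while each half-space $h_{i-1}^{\pm},h_{i+1}^{\pm}$ is path-connected and contains $x$ and $y$ on opposite sides of $k$, one gets $k\subseteq S$. A short case analysis on orientations then finishes the job: letting $\epsilon_1,\epsilon_2\in\{-,+\}$ be determined by $h_{i-1}^{-}\subseteq k^{\epsilon_1}$ and $h_{i+1}^{+}\subseteq k^{\epsilon_2}$, tracing $b$ from $h_{i-1}^{-}$ through $y\in k^{+}$ to $h_{i+1}^{+}$ rules out $\epsilon_1=\epsilon_2=-$ (otherwise $b$ would cross $k$ twice), and tracing $a$ from $h_{i-1}^{-}$ through $x\in k^{-}$ to $h_{i+1}^{+}$ rules out $\epsilon_1=\epsilon_2=+$; hence $\epsilon_1\neq\epsilon_2$, which forces $k$ to separate $h_{i-1}$ from $h_{i+1}$. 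Therefore $c_0$ is a chain separating $h_{i-1}$ from $h_{i+1}$, hence separating a closest pair $z\in h_{i-1}$, $z'\in h_{i+1}$; since any chain separating two points has cardinality less than their distance \cite{PSZ}, and $d(z,z')=d(h_{i-1},h_{i+1})\le t_{i+1}-t_{i-1}$ (using points on the poles as witnesses), we get $|c_0|< t_{i+1}-t_{i-1}\le C\K(t_i)+C\K(t_{i+1})$. Combining, $d(x,y)\le 1+|A|+|B|+|c_0|\le 1+2C\K(t_i)+2C\K(t_{i+1})$, and since $t_{i+1}-t_i\le C\K(t_{i+1})$, Lemma \ref{3.2} yields $\K(t_{i+1})\le D_2\K(t_i)$ for some $D_2$ depending only on $C$ and $\K$; absorbing the finitely many smallest indices into the constant gives $d\bigl(a(s_i),b(t_i)\bigr)\le D\K(t_i)$ with $D$ depending only on $C$. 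The main obstacle is this last orientation argument — the real content of the lemma is that a curtain separating $x$ from $y$ cannot ``hide'' inside the slab $S$ without separating $h_{i-1}$ from $h_{i+1}$ — and it is precisely the one-crossing/no-bigon property of geodesics (Lemma \ref{nobigons}) that makes the $\K$-chain separation bounds applicable.
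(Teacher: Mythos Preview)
Your proof is correct and uses a genuinely different decomposition from the paper's. The paper forms a quadrilateral with vertices $a(s_i), a(s_{i+1}), b(t_{i+1}), b(t_i)$ and sorts the curtains of $c$ by which of the three remaining sides they exit through; the residual class (those exiting through $[a(s_i),a(s_{i+1})]$) is then handled by a second quadrilateral on the left using $s_{i-1}, t_{i-1}$, with Lemma~\ref{nobigons} ruling out one edge. You instead sort curtains of $c$ by whether they meet $h_{i-1}$ or $h_{i+1}$, and your key observation---that every $k\in c$ meets $h_i$ because $x,y\in h_i$ and $h_i$ is path-connected---lets you bound $|A|$ and $|B|$ directly via the $\K$-separation of consecutive pairs in the chain. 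Your orientation argument for $c_0$ plays the role of the paper's second quadrilateral, and Lemma~\ref{nobigons} enters in exactly the right form there: $k$ is dual to $[x,y]$ with $y\in b$ and $x\in a$, so nobigons applied at each endpoint rules out the two bad cases $\epsilon_1=\epsilon_2$. Both routes land on the same bound; yours is more intrinsic to the $\K$-chain structure, while the paper's two-quadrilateral method is more mechanical. One small correction: your early claim that $a$ ``crosses each curtain at most once'' is not what Lemma~\ref{nobigons} gives (the $h_j$ are dual to $b$, not to a segment with an endpoint on $a$), but this step is unnecessary since $x\in h_{i-1}^{+}\cap h_{i+1}^{-}$ follows immediately from $\pi_b(x)=b(t_i)$.
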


 \begin{proof}
 The following argument is illustrated in Figure  \ref{fig3}. Let $s_{i-1} < s_i <s_{i+1}$ such that $\pi_b(a(s_j)) =  b(t_j)$ for $j \in \{i-1, i, i+1\}.$ Consider geodesics $[a(s_i),a(s_{i+1})],$ $ [a(s_{i+1}),b(t_{i+1})],$ $ [b(t_{i}), b(t_{i+1})],$ and $[a(s_i), b(t_{i})].$ This is a quadrilateral in our space. Let $c$ be a maximal chain dual to $[a(s_i), b(t_{i})]$. Note that all curtains in $c$ must cross at least one of $[a(s_i),a(s_{i+1})],$ $ [a(s_{i+1}),b(t_{i+1})],$ or $ [b(t_{i}), b(t_{i+1})].$ Thus, denote $c_1$, $c_2$, and $c_3$ as the collections of curtains in $c$ as seen in Figure \ref{fig3}. That is, $c_1$ are the curtains in $c$ that also meet $ [b(t_{i}), b(t_{i+1})]$ and so on. We get $|c| \leq |c_1|+|c_2|+|c_3|$. Also, define $c'$ as the curtains in $c_3$ that also meet $[a(s_{i-1}),b(t_{i-1})]$  and $c''$ as the curtains of $c_3$ that also meet $[b(t_{i-1}),b(t_{i+1})]$. No curtains in $c_3$ meet $[a(s_{i-1}),a(s_i)]$ by Lemma \ref{nobigons}. We have that $|c_3| \leq |c'| + |c''|$.

 Now, $|c_1| \leq C\K(t_{i+1})$ since, by assumption of the $\K$-curtain-excursion chain, the length of  $[b(t_{i}), b(t_{i+1})]$ is bounded by $C\K(t_{i+1})$. Also, we have $|c_2| \leq C\K(t_{i+1})$ because  $h_i$ and $h_{i+1}$ are  $C\K(t_{i+1})$-separated. Similarly, we have $|c'|\leq C\K(t_{i})$, and $|c''|\leq C\K(t_{i})$  This gives 
 
 \begin{align*}
     |c| &\leq |c_1|+|c_2|+|c_3| \\
         &\leq |c_1|+|c_2|+|c'| +|c''|\\
         &\leq C\K(t_{i+1})+ C\K(t_{i+1})+ C\K(t_{i})+ C\K(t_{i})\\
         &\leq 4C\K(t_{i+1}).\\
 \end{align*}

 \begin{figure}[ht]
    \centering
    \begin{tikzpicture}[scale=1]

    \fill[red, opacity=0.2] (.8,-3) .. controls(1,-2.5)  and (1,.5)   .. (.8,2.5) -- (1.8,2.5).. controls (1.6, .5)  and (1.6, -2.5) .. (1.8,-3) -- cycle;

    \fill[red, opacity=0.2] (.8+3.1,-3) .. controls(1+3.1,-2.5)  and (1+3.1,.5)   .. (.8+3.1,2.5) -- (1.8+3.1,2.5).. controls (1.6+3.1, .5)  and (1.6+3.1, -2.5) .. (1.8+3.1,-3) -- cycle;

    \fill[red, opacity=0.2] (.8+6.2,-3) .. controls(1+6.2,-2.5)  and (1+6.2,.5)   .. (.8+6.2,2.5) -- (1.8+6.2,2.5).. controls (1.6+6.2, .5)  and (1.6+6.2, -2.5) .. (1.8+6.2,-3) -- cycle;

        

    \draw[fill] (1.3,-2) circle [radius=0.03];
    \node [below] at (1.3,-2) {\footnotesize$b(t_{i-1})$};

    \draw[fill] (4.9-.5,-2) circle [radius=0.03];
    \node [below, ] at (4.9-.5,-2) {\footnotesize $b (t_{i})$};
    
    \draw[fill] (7.5,-2) circle [radius=0.03];
    \node [below, ] at (7.5,-2) {\footnotesize $b (t_{i+1})$};
    
    \draw [->] (-1,1) .. controls(3,-1) and (7,-1) ..  (9,1);
    \draw [blue, ->](-1,-2) to (10,-2);
    \node [below, blue] at (10,-2) {$b$};
    \node[above] at (9,1) {$a$};

    \draw[fill] (1.3,.08) circle [radius=0.03];
    \draw[fill] (4.4,-.49) circle [radius=0.03];
    \draw[fill] (7.5,.02) circle [radius=0.03];

    \node [above] at (1.3, .08) {$a(s_{i-1})$};
    \node [above] at (4.4, -.49) {$a(s_i)$};
    \node [above] at (7.5, .02) {$a(s_{i+1})$};
    \node [above, red, opacity=0.9 ] at (1.3,2.5) {\large $h_{i-1}$};
    \node [above, red, opacity=0.9 ] at (4.4,2.5) {\large $h_{i}$};
    \node [above, red, opacity=0.9 ] at (7.5,2.5) {\large $h_{i+1}$};
    \node[left] at (3.6, -1.35+.4) {$c_3$};
    \node[left] at (3.6, -1.6+.22) {$c_2$};
    \node[left] at (3.6, -1.85+.1) {$c_1$};

    \draw[dashed] (4.4, -.51) to (4.4,-2);
    \draw[dashed] (7.5, .02) to (7.5,-2);
    \draw[dashed] (1.3, 0.1) to (1.3,-2);
    
    \draw[black!25!green, thick] (3.6, -1.35 +.4) .. controls(5.5,-1.35+.4) .. (6, 1+.4);
    \draw[black!25!green, thick] (3.45, -1.3+.4) .. controls (3.7,-1.3+.4) and (3.7,-1.4+.4) .. (3.45, -1.4+.4);
    \draw [black!25!green, thick] (3.5, -1.6+.22) to (8.2, -1.6+.22);
    \draw [black!25!green, thick] (3.5, -1.85+.1) .. controls(5.5,-1.85+.1) .. (6, -3+.1);

    \draw[black!25!green, thick] (3.1, -1.3+.4) to (.5, -1.3+.4);
    \draw[black!25!green, thick] (3.1, -1.4+.4) .. controls(2.6,-1.4+.4) .. (2.4, -3.5+.4);
    \node[left] at (.5, -.9) {$c'$};
    \node[below] at (2.4, -3.1) {$c''$};

    \end{tikzpicture}
    \caption{The set up of Lemma \ref{closeness} with subchains $c_1,c_2,$ and $c_3$ included. The bounds of $c_1,c_2$ and $c_3$ will show that $d(a(s_i), b(t_{i})) \leq D\K(t_{i})$ for some $D$ depending on $C$.}
    \label{fig3}
\end{figure}
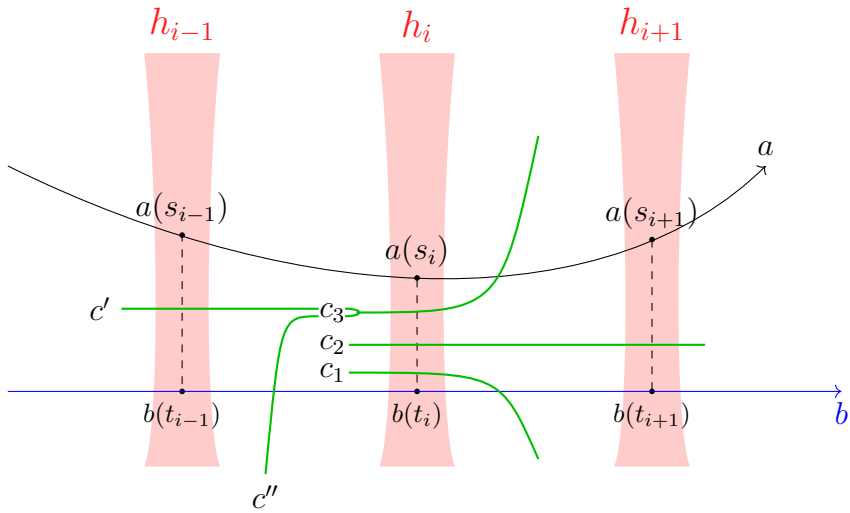
 Since $|c|$ is a maximal chain of $[a(s_i), b(t_{i})]$, we have that $a(s_i)$ is within a distance of $(4C\K(t_{i+1}) + 1)$ from $b(t_{i})$. By Lemma \ref{3.2}, we have that there exists a $D\geq 0$ such that $d(a(s_i), b(t_{i})) \leq D\K(t_{i}).$ 
 \end{proof}

The following definition from \cite{MQZ20} gives another characterization of $\K$-contracting rays that will be useful in the reverse direction of Theorem \ref{curtainexcursion}

\begin{definition}[$\K$-slim geodesic]
We say an infinite geodesic ray $b$ is \textit{$\K$-slim} if there exists some $C\geq 0$ such that for any $x \in X$, $y \in b,$ we have $d(\pi_b(x), [x,y]) \leq C\K(\pi_b(x))$.
\end{definition}

\begin{lemma}[Proposition 3.6/Corollary 3.7 in \cite{MQZ20}] \label{kslim} A geodesic ray $b$ is $\K$-contracting if and only if it is $\K$-slim.

\end{lemma}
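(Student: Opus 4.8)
\noindent\emph{Proof proposal.} The plan is to prove the two implications separately, using Lemma~\ref{4.14} for ``$\K$-contracting $\Rightarrow$ $\K$-slim'' and CAT(0) comparison geometry (via the contrapositive) for the converse, together with three elementary facts: $\pi_b$ is $1$-Lipschitz and fixes $\ob$, so $\K(\pi_b z)\le\K(z)$; $\K$ is concave with $\K\ge1$, so $\K(ct)\le c\,\K(t)$ for $c\ge1$; and an open ball disjoint from $b$ has radius at most the distance from its centre to $\ob$, so sublinear quantities at the centre control those at all points of the ball. I expect the direction ``$\K$-slim $\Rightarrow$ $\K$-contracting'' to be the main obstacle.

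\emph{$\K$-contracting $\Rightarrow$ $\K$-slim.} Let $b$ be $\K$-contracting with constant $D$, fix $x\in X$ and $y\in b$, and put $p=\pi_b x$. Choose $\lambda=\lambda(D,\K)$ with $4D\,D_2(\lambda)<\lambda$, where $D_2(\lambda)$ is the constant of Lemma~\ref{3.2} for $D_0=\lambda$ (possible, since $D_2(\lambda)$ grows sublinearly in $\lambda$). If $d(p,y)\le\lambda\K(p)$ then $d(p,[x,y])\le d(p,y)\le\lambda\K(p)$; otherwise let $y'$ be the point of the sub-segment $[p,y]\subseteq b$ with $d(p,y')=\lambda\K(p)$, so $\K(y')\le D_2(\lambda)\K(p)$ by Lemma~\ref{3.2}, hence $d(p,y')>4D\K(y')$. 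Picking $u\in[x,y]$ with $\pi_b u=y'$ (possible, as $\pi_b([x,y])$ is a sub-interval of $b$ containing $p$ and $y$), the interval $\pi_b([x,u])$ contains $p$ and $y'$ and so has length $>4D\K(y')$; Lemma~\ref{4.14} applied to $[x,u]$ then gives $\pi_b([x,u])\subseteq N_{5D\K(y')}([x,u])$, and since $p\in\pi_b([x,u])$,
\[
  d(p,[x,y])\ \le\ d(p,[x,u])\ \le\ 5D\K(y')\ \le\ 5D\,D_2(\lambda)\,\K(p).
\]
Thus $b$ is $\K$-slim.

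\emph{$\K$-slim $\Rightarrow$ $\K$-contracting.} Argue contrapositively. If $b$ is not $\K$-contracting then, for every $C$, splitting the projected diameter of a bad ball through its centre's projection yields $x_0,p$ with $d(x_0,p)\le d(x_0,b)=:R$ and $d(\pi_b x_0,\pi_b p)>C\K(x_0)$; write $c=\pi_b x_0$ and $y=\pi_b p$, so $C\K(x_0)<d(c,y)\le R$ (the upper bound since $\pi_b$ is $1$-Lipschitz). Apply $\K$-slimness to $(x_0,y)$: for $w\in[x_0,y]$ realizing $d(c,[x_0,y])$, adding $d(x_0,c)\le d(x_0,w)+d(w,c)$ and $d(c,y)\le d(c,w)+d(w,y)$ gives
\[
  2\,d(c,[x_0,y])\ \ge\ R+d(c,y)-d(x_0,y),
\]
and since $c=\pi_b x_0$, $y\in b$, the Alexandrov angle at $c$ is $\ge\pi/2$, so CAT(0) comparison gives $d(x_0,y)\ge\sqrt{R^2+d(c,y)^2}$. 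When $d(x_0,y)$ is near this lower bound the previous display forces $d(c,[x_0,y])\gtrsim d(c,y)>C\K(x_0)\ge C\K(c)$, contradicting $\K$-slimness once $C$ exceeds the slimness constant. The gap --- and the main obstacle --- is that comparison only bounds $d(x_0,y)$ from below: if the angle at $c$ approaches $\pi$, so that $c$ nearly lies on $[x_0,y]$ and $d(x_0,y)\approx R+d(c,y)$, then $R+d(c,y)-d(x_0,y)\approx0$ and slimness for $(x_0,y)$ is not violated. Such a configuration is tree-like at $c$ (every geodesic from $x_0$ to $b$ nearly passing through $c$), and in that regime $b$ is in fact contracting, contrary to hypothesis; turning this heuristic into a quantitative dichotomy --- or else routing the implication through another equivalent characterization, e.g.\ $\K$-slim $\Rightarrow$ $\K$-Morse $\Rightarrow$ $\K$-contracting via the equivalence of Theorem~3.8 in \cite{QRT19} --- is where the real work of this direction lies.
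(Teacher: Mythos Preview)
The paper does not supply its own proof of this lemma: it is quoted verbatim as Proposition~3.6/Corollary~3.7 of \cite{MQZ20} and used as a black box, so there is no in-paper argument to compare your proposal against.

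On the merits of your attempt: the forward direction via Lemma~\ref{4.14} is the right idea and is essentially how the cited source proceeds. One point needs justification, though. Your assertion that $D_2(\lambda)$ grows sublinearly in $\lambda$ is not part of the statement of Lemma~\ref{3.2}; it is true, but it requires an argument. Concretely, from concavity and $\K\ge 1$ one gets $D_2(D_0)\le \sup_{t\ge 0}\K(t+D_0\K(t))/\K(t)$, and the worst case $t=0$ gives $D_2(D_0)\le \K(D_0)$, which is sublinear in $D_0$ because $\K$ is. You should include this, since without it the inequality $4D\,D_2(\lambda)<\lambda$ is unsupported. Also note a small technicality: Lemma~\ref{4.14} is stated for endpoints not on $b$, so the edge case $u\in b$ (i.e.\ $y'\in[x,y]$) should be dispatched separately, which is immediate since then $d(p,[x,y])\le d(p,y')=\lambda\K(p)$.

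Your reverse direction is, as you yourself say, incomplete. The CAT(0) comparison argument you sketch genuinely fails in the ``angle near $\pi$'' regime you identify, and there is no soft fix: the contrapositive gives you a single bad ball, not a family, so you cannot leverage ``tree-like at $c$ implies contracting'' without effectively reproving the implication. The alternative route you propose, $\K$-slim $\Rightarrow$ $\K$-Morse $\Rightarrow$ $\K$-contracting, is logically available via Theorem~3.8 of \cite{QRT19}, but proving $\K$-slim $\Rightarrow$ $\K$-Morse directly is no easier than the original implication. The argument in \cite{MQZ20} for this direction is direct (bounding projections of balls using the slimness of suitable triangles) and does not pass through the Morse condition; you would need to consult that source to complete this half.
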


\begin{proposition} \label{ExcursionToContracting}
  Let $b$ be a $\K$-curtain-excursion geodesic with $\K$-chain $\{h_i\}$ dual to $b$. Denote $b(t_i)$ as the centers of the poles of each $h_i$, and put $C\geq0$ the excursion constant. Then $b$ is $\K$-slim for constant $D'\geq0$, depending only on $C$ (and $\K$-contracting by Lemma \ref{kslim}).
\end{proposition}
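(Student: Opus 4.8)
The plan is to establish the $\K$-slim condition directly and then appeal to Lemma~\ref{kslim}. Fix $x\in X$ and $y\in b$, and write $p:=\pi_b(x)=b(t)$ and $y=b(t')$; the goal is a constant $D'$, depending only on $C$ and $\K$, with $d(p,[x,y])\le D'\K(t)$. After interchanging the two ends of $b$ we may assume $t'\ge t$, the case $t'<t$ being handled the same way by working with the curtains of $\{h_i\}$ whose poles lie between $b(t')$ and $p$ and are closest to $p$.

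First I would pin down which curtains $[x,y]$ is forced to cross. For any index $k$ with $t+\tfrac12<t_k<t'-\tfrac12$ we have $x\in h_k^-$ (since $\pi_b(x)=b(t)$ with $t<t_k-\tfrac12$) and $y\in h_k^+$, so $h_k$ separates $x$ from $y$ and $[x,y]$ crosses $h_k$. Let $i$ be the least index with $t_i>t+\tfrac12$. If the interval $(t+\tfrac12,t'-\tfrac12)$ meets the poles of fewer than three curtains of the chain, then $t'-t$ is bounded by the combined length of a bounded number of consecutive gaps of $\{t_k\}$ near $p$; since consecutive poles differ by at most $C\K(t_k)$, sublinearity of $\K$ (which prevents a long gap straddling $t$) together with Lemma~\ref{3.2} forces $t'-t\le D'\K(t)$, and then $d(p,[x,y])\le d(p,y)=t'-t$ already suffices. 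Otherwise $[x,y]$ crosses the three consecutive curtains $h_i,h_{i+1},h_{i+2}$.

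In this main case I would invoke Lemma~\ref{closeness} for the geodesic $a:=[x,y]$ with the triple $h_i,h_{i+1},h_{i+2}$ in place of its $h_{i-1},h_i,h_{i+1}$. Since $[x,y]$ crosses $h_{i+1}=h_{b,t_{i+1}}$, the remark following Lemma~\ref{crossing} gives a point $a(s)\in[x,y]$ with $\pi_b(a(s))=b(t_{i+1})$, which is the point that lemma needs, and Lemma~\ref{closeness} then yields a constant $D$ depending only on $C$ with $d\big(a(s),b(t_{i+1})\big)\le D\K(t_{i+1})$. By minimality of $i$ one has $t_{i-1}\le t+\tfrac12$, so repeated use of Lemma~\ref{3.2} to pass the argument of $\K$ between the comparable points $b(t),b(t_{i-1}),b(t_i),b(t_{i+1})$ shows that both $|t-t_{i+1}|$ and $\K(t_{i+1})$ are at most a fixed multiple of $\K(t)$, with constants depending only on $C$ and $\K$. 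Then
\[
 d(p,[x,y])\ \le\ d\big(b(t),b(t_{i+1})\big)+d\big(b(t_{i+1}),a(s)\big)\ \le\ |t-t_{i+1}|+D\K(t_{i+1})\ \le\ D'\K(t),
\]
which is the desired $\K$-slimness bound; $\K$-contractingness follows from Lemma~\ref{kslim}.

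The hard part is the bookkeeping in the final step: one must guarantee that every pole parameter used --- $t_i$, $t_{i+1}$, $t_{i+2}$ --- stays within an $O(\K(t))$-window of $t$, so that the estimate $D\K(t_{i+1})$ coming out of Lemma~\ref{closeness} can be converted into a bound of the form $D'\K(\pi_b(x))$ with a constant independent of $x$ and $y$; this is exactly what the repeated applications of Lemma~\ref{3.2} (swapping the argument of $\K$ between nearby points) are for. One small point to dispose of is the degenerate case where $p$ lies before the first curtain of the chain, so that $i=0$ and there is no $h_{i-1}$ to control $t_{i+1}-t$; this situation does not arise for the chain produced in Proposition~\ref{ContractingToExcursion}, and otherwise only enlarges the constant.
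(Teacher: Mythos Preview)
Your proof is correct and follows essentially the same approach as the paper: both arguments locate three consecutive curtains of the dual $\K$-chain that $[x,y]$ must cross, apply Lemma~\ref{closeness} to get a point of $[x,y]$ within $D\K(t_{i+1})$ of $b(t_{i+1})$, and then use the $\K$-chain spacing together with Lemma~\ref{3.2} to convert this into a bound of the form $D'\K(\pi_b(x))$. The paper organizes its cases by the position of $y$ relative to the fixed curtains $h_{i-4}$ and $h_{i+2}$ (where $i$ is minimal with $x\in h_i^-$), while you split first by the sign of $t'-t$ and then by whether enough curtains are crossed; this is a cosmetic difference. Two minor points of phrasing: saying ``interchanging the two ends of $b$'' is inaccurate for a ray, though your parenthetical explanation of the $t'<t$ case is the right fix; and in the small case the bound on the straddling gap comes from the $\K$-chain spacing plus Lemma~\ref{3.2}, not from sublinearity of $\K$ alone.
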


\begin{proof}
 Let $x \in X$. Then, there exists a minimal $i$ such that $x\in h_i^-$. Let $y \in b$. We split where $y$ can be placed on $b$ into three cases: $y \in h_{i+2}^+$, $y \in h^-_{i-4}$ and $y \in [b(t_{i-4} - \frac{1}{2}), b(t_{i+2}+\frac{1}{2})]$. 

 When $y \in h_{i+2}^+$, take $[x,y]$ which will cross $h_i,h_{i+1}$ and $h_{i+2}$. By Lemma \ref{closeness}, we have $b(t_{i+1})$ is within $D\K(t_{i+1})$ of $[x,y]$ for some $D>0$ depending only on $C$. We know $\pi_b(x) \in h_{i-2}^+$. Thus, $d(\pi_b(x), b(t_{i+1})) \leq 3C\K(t_{i+1})$. This gives $$d(\pi_b(x), [x,y]) \leq (3C+D)\K(t_{i+1}).$$  Since $\pi_b(x)$ is on the geodesic $[b(t_{i-2}),b(t_i)]$, we get that Lemma \ref{3.2} implies there exists a $D'$ such that $d(\pi_b(x), [x,y]) \leq D'\K(\pi_b(x))$. See Figure \ref{K slim fig}.  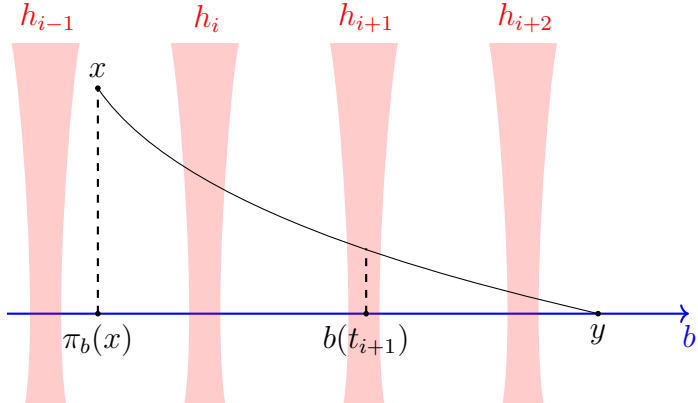
\begin{figure}[ht]
     \centering
\begin{tikzpicture}[scale=.6]

    \draw[blue, thick, ->] (0,0) to (15,0);

    \node [below, blue] at (15,0) {$b$};

    \draw[fill] (2,5) circle [radius= 0.05];
    \draw[fill] (13,0) circle [radius= 0.05];


    \fill[red, opacity=0.2] (3.9-3.5,-2) .. controls(4.2-3.5,0)  and (3.8-3.5,5)   .. (3.6-3.5,6) -- (5.5-.4-3.5,6) .. controls (5.3-.4-3.5,5)  and (4.9-.4-3.5,0) .. (5.2-.4-3.5, -2) -- cycle;


    \fill[red, opacity=0.2] (3.9,-2) .. controls(4.2,0)  and (3.8,5)   .. (3.6,6) -- (5.5-.4,6) .. controls (5.3-.4,5)  and (4.9-.4,0) .. (5.2-.4, -2) -- cycle;


    \fill[red, opacity=0.2] (3.9+3.5,-2) .. controls(4.2+3.5,0)  and (3.8+3.5,5)   .. (3.6+3.5,6) -- (5.5-.4+3.5,6) .. controls (5.3-.4+3.5,5)  and (4.9-.4+3.5,0) .. (5.2-.4+3.5, -2) -- cycle;


    \fill[red, opacity=0.2] (3.9+7,-2) .. controls(4.2+7,0)  and (3.8+7,5)   .. (3.6+7,6) -- (5.5-.4+7,6) .. controls (5.3-.4+7,5)  and (4.9-.4+7,0) .. (5.2-.4+7, -2) -- cycle;

    \draw (2,5) .. controls(4,2) and (11,.5) ..  (13,0);

    \node [above] at (2,5) {$x$};
    \node [below] at (13,0) {$y$};
    \draw[fill] (2,0) circle [radius= 0.05];
    \node [below] at (2,0) {$\pi_b(x)$};

    \node [above, red] at (.9,6) {$h_{i-1}$};
    \node [above, red] at (4.4,6) {$h_{i}$};
    \node [above, red] at (7.9,6) {$h_{i+1}$};
    \node [above, red] at (11.4,6) {$h_{i+2}$};
    \draw[fill] (7.9,0) circle [radius= 0.05];
    \node [below] at (7.9,0) {$b(t_{i+1})$};
    \draw[dashed, thick] (7.9,0) to (7.9,1.45);

    \draw[dashed, thick] (2,0) to (2,5);

\end{tikzpicture}
     \caption{Picture of the first case of Proposition \ref{ExcursionToContracting}.  Since $d(b(t_{i+1}), [x,y]) \leq D\K(t_{i+1})$ and  $d(\pi_b(x), b(t_{i+1})) \leq 3C\K(t_{i+1})$, we see that the $\pi_b(x)$ will also be sublinearly close to $[x,y]$.}
     \label{K slim fig}
 \end{figure}

 The proof in the case of $y \in h^-_{i-4}$ is the same and its corresponding picture will be a mirrored version of Figure \ref{K slim fig}.

 Lastly, when $y \in [b(t_{i-4} - \frac{1}{2}), b(t_{i+2}+\frac{1}{2})]$, we similarly have that $\pi_b(x) \in h_{i-2}^+\cap h_i^-$ will be within $5C\K(t_{i+2})$ of $y$. Thus, again by Lemma \ref{3.2}, there will exist a $D'$ depending only on $C$ such that $d(\pi_b(x), [x,y]) \leq D'\K(\pi_b(x))$. Hence the proof (and also the proof of Theorem \ref{curtainexcursion})\hypertarget{Section3.2}{.} \end{proof}

\noindent 3.2 \textbf{Dualizing a $\K$-chain.} \vspace{.25cm} \\ 
In \cite{PSZ22}, the authors shows that if a chain of $L$-separated curtains \textbf{\textit{meets}} a geodesic, one can follow the process of \cite[Lemma 4.5]{PSZ22} to find a chain of $L$-separated curtains dual to the geodesic. If one were to follow this process in the sublinear case, it is likely that the dual curtains will be at a $\K^2$ distance apart resulting in a dual $\K^2$-chain instead of a $\K$-chain. Instead, we rework Lemma \ref{closeness} and Proposition \ref{ExcursionToContracting} to allow for a $\K$-chain that is not necessarily dual to the geodesic. This is Proposition \ref{Closeness(Not dual)} and Proposition \ref{NonDualto Contracting}, respectively. Then, Proposition \ref{ContractingToExcursion} finds a $\K$-chain dual to the geodesic.

\begin{proposition} \label{Closeness(Not dual)}
     Let $b$ be a geodesic ray that meets a $\K$-chain $\{h_i\}$ (not necessarily dual) at points $b(t_i)$ with excursion constant $C>0$. Put $P_i$ as the poles of each $h_i$. If $a$ is another geodesic and crosses $h_{i-1}, h_i, h_{i+1}$ with  $\pi_{P_i}(a(s_i)) = \pi_{P_i}(b(t_i))$ for some $s_i >0$, then there exists a $D>0$ depending only on $C$ such that $d(a(s_i), b(t_{i})) \leq D\K(t_{i})$. 
\end{proposition}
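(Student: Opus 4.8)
The plan is to mimic the proof of Lemma \ref{closeness}, replacing the role of the pole centers $b(t_i)$ (which are no longer on the curtains in the expected way since the chain is not dual) with the projection points $\pi_{P_i}(b(t_i))$, and carefully tracking how the bounds change. First I would set up the same quadrilateral: pick $s_{i-1}<s_i<s_{i+1}$ with $\pi_{P_j}(a(s_j))=\pi_{P_j}(b(t_j))$ for $j\in\{i-1,i,i+1\}$, and consider the geodesics $[a(s_i),a(s_{i+1})]$, $[a(s_{i+1}),b(t_{i+1})]$, $[b(t_i),b(t_{i+1})]$ and $[a(s_i),b(t_i)]$. Let $c$ be a maximal chain of curtains dual to $[a(s_i),b(t_i)]$, so that $|c|$ is within $1$ of $d(a(s_i),b(t_i))$ by Lemma 2.10 of \cite{PSZ22}.

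Next I would decompose $c$ exactly as in Lemma \ref{closeness}: every curtain of $c$ must cross one of the other three sides of the quadrilateral, giving $c=c_1\cup c_2\cup c_3$ where $c_1$ meets $[b(t_i),b(t_{i+1})]$, $c_2$ meets $[a(s_{i+1}),b(t_{i+1})]$, and $c_3$ meets $[a(s_i),a(s_{i+1})]$. Then $|c_1|\le C\K(t_{i+1})$ since $d(b(t_i),b(t_{i+1}))\le C\K(t_{i+1})$ (here I should be slightly careful: the chain meets $b$ at $b(t_i)$ but the points $b(t_i)$ need not be centers of poles; nonetheless the definition of a $\K$-chain meeting a geodesic bounds $t_{i+1}-t_i$ by $C\K(t_{i+1})$, so the geodesic segment has the right length). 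For $|c_2|$: the segment $[a(s_{i+1}),b(t_{i+1})]$ has both endpoints with the same projection to $P_{i+1}$, hence this geodesic stays close to $h_{i+1}$ — more precisely, by Lemma \ref{starconvexity} applied to the points $a(s_{i+1})$ and $b(t_{i+1})$ and their common projection to $P_{i+1}$, the whole segment lies in $h_{i+1}$ up to controlled error, so any curtain in $c_2$ crosses both $h_i$ and $h_{i+1}$, giving $|c_2|\le C\K(t_{i+1})$ by $C\K(t_{i+1})$-separation. For $c_3$, repeat the argument one index down with the triangle/quadrilateral involving $a(s_{i-1})$, $b(t_{i-1})$: split $c_3=c'\cup c''$ where $c'$ meets $[a(s_{i-1}),b(t_{i-1})]$ and $c''$ meets $[b(t_{i-1}),b(t_{i+1})]$, using Lemma \ref{nobigons} to rule out curtains of $c_3$ meeting $[a(s_{i-1}),a(s_i)]$; then $|c'|\le C\K(t_i)$ and $|c''|\le C\K(t_i)$ by the same reasoning (the segment $[a(s_{i-1}),b(t_{i-1})]$ hugging $h_{i-1}$, and $[b(t_{i-1}),b(t_{i+1})]$ having length bounded in terms of $\K$).

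Summing, $|c|\le|c_1|+|c_2|+|c'|+|c''|\le 2C\K(t_{i+1})+2C\K(t_i)\le 4C\K(t_{i+1})$, so $d(a(s_i),b(t_i))\le 4C\K(t_{i+1})+1$, and Lemma \ref{3.2} converts this to $d(a(s_i),b(t_i))\le D\K(t_i)$ with $D$ depending only on $C$ (using $t_{i+1}-t_i\le C\K(t_{i+1})\le C\,t_{i+1}$ to relate $\K(t_{i+1})$ and $\K(t_i)$; one may need the crude bound $t_{i+1}\le 2t_i$ for large $i$, or just feed $d(b(t_i),b(t_{i+1}))\le C\K(t_{i+1})$ into Lemma \ref{3.2} directly).

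The main obstacle I anticipate is the step bounding $|c_2|$ (and symmetrically $|c'|$): in Lemma \ref{closeness} the curtains were dual to $b$, so $b(t_{i+1})$ was literally the center of the pole of $h_{i+1}$ and any curtain crossing $[a(s_{i+1}),b(t_{i+1})]$ obviously crossed $h_{i+1}$. Here $b(t_{i+1})$ is just \emph{some} point where $b$ meets $h_{i+1}$, and $a(s_{i+1})$ is chosen only so that its projection to the pole $P_{i+1}$ agrees with that of $b(t_{i+1})$ — so I must argue that the geodesic $[a(s_{i+1}),b(t_{i+1})]$ actually lies inside (a bounded neighborhood of) $h_{i+1}$, which is precisely where Lemma \ref{starconvexity}'s star-convexity of curtains around their poles is essential: both endpoints connect to their common pole-projection within $h_{i+1}$, and concatenating these two sub-geodesics gives a path in $h_{i+1}$ whose projection to $b$ is a bounded interval, so a maximal chain of curtains separating $a(s_{i+1})$ from $b(t_{i+1})$ refines to curtains all meeting $h_{i+1}$ (up to an additive constant). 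Making that "up to an additive constant" precise — and absorbing it into $C$ via Lemma \ref{3.2} — is the delicate bookkeeping, but it is genuinely the same mechanism as in Lemma \ref{closeness}; no new idea should be required.
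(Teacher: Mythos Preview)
Your proposal is correct and follows essentially the same route as the paper. The paper's proof isolates the key point slightly more cleanly: it observes once, via Lemma~\ref{starconvexity}, that the concatenation $[a(s_i),\pi_{P_i}(b(t_i))]*[\pi_{P_i}(b(t_i)),b(t_i)]$ lies entirely in $h_i$, so \emph{every} curtain of $c$ meets $h_i$; after that single observation the decomposition $c_1\cup c_2\cup c_3$ and the bounds run verbatim as in Lemma~\ref{closeness}. Your version distributes the same star-convexity argument across the indices $i-1,i,i+1$, which is fine --- just note that you should also invoke it at index $i$ to justify that curtains in $c_2$ meet $h_i$ (you state this but only explain the $h_{i+1}$ half). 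Finally, your ``up to an additive constant'' worry is unnecessary: the concatenated path through the common pole-projection lies \emph{exactly} in the curtain, so any curtain separating its endpoints meets it exactly, with no error term to absorb.
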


\begin{proof}
    Again, similar to Lemma \ref{closeness}, consider the geodesic $[a(s_i),b(t_i)]$ and let $c$ be a maximal chain dual to $[a(s_i),b(t_i)]$. Since $\pi_{P_i}(a(s_i)) = \pi_{P_i}(b(t_i))$, then due to Lemma \ref{starconvexity}, the concatenation $[a(s_i),\pi_{P_i}(b(t_i))]*[\pi_{P_i}(b(t_i)),b(t_i)]$ will be inside of $h_i$. Since any curtain in the chain $c$ must meet $[a(s_i),\pi_{P_i}(b(t_i))]*[\pi_{P_i}(b(t_i)),b(t_i)]$, we must have that any curtain in $c$ must meet $h_i$ (even though $[a(s_i),b(t_i)]$ might not stay inside of $h_i$). On the other hand, all curtains in $c$ must also meet the concatenation $[a(s_i),a(s_{i+1})]*[a(s_{i+1}),b(t_{i+1})]*[b(t_{i+1}),b(t_i)]$. So, like Lemma \ref{closeness}, one gets that $d(a(s_i),b(t_i) \leq D \K(t_i)$ where $D$ depends only on $C$.
\end{proof}

\begin{proposition} \label{NonDualto Contracting}
  Let $b$ be geodesic that meets a $\K$-chain $\{h_i\}$ (not necessarily dual) at points $b(t_i)$ with excursion constant $C>0$. Then $b$ is $\K$-slim for constant $D'\geq0$, depending only on $C$ (and $\K$-contracting by Lemma \ref{kslim}).
\end{proposition}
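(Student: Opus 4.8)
The plan is to reproduce the three-case argument from the proof of Proposition \ref{ExcursionToContracting}, replacing each use of Lemma \ref{closeness} by its non-dual analogue Proposition \ref{Closeness(Not dual)}. Fix $x \in X$ and $y \in b$, and let $i$ be the smallest index with $x \in h_i^-$, so $x \in h_{i-1}^+ \cap h_i^-$. Since $\{h_i\}$ is a chain meeting $b$ at the points $b(t_i)$ ordered along $b$, the curtain $h_j$ separates $b(t_{j-1})$ from $b(t_{j+1})$, and reading this off along $b$ (with one curtain of slack coming from the unit width of the poles) shows $\pi_b(x)$ lies on the segment $b[t_{i-2}, t_i]$, exactly as in Proposition \ref{ExcursionToContracting}. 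Then split into the same three cases: $y \in h_{i+2}^+$, $y \in h_{i-4}^-$, and $y$ in the middle segment $b[t_{i-4} - \tfrac12, t_{i+2} + \tfrac12]$.

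Consider the case $y \in h_{i+2}^+$. The geodesic $a := [x,y]$ runs from $h_i^-$ to $h_{i+2}^+$, so it crosses $h_i, h_{i+1}, h_{i+2}$. To apply Proposition \ref{Closeness(Not dual)} at index $i+1$ I need a point $a(s_{i+1})$ on $a$ with $\pi_{P_{i+1}}(a(s_{i+1})) = \pi_{P_{i+1}}(b(t_{i+1}))$, and this is supplied by Lemma \ref{crossing}: writing $h_{i+1} = h_{g,r}$ for its defining geodesic $g$, the fact that $a$ passes from one side of $h_{i+1}$ to the other forces $\pi_g(a)$ to sweep the whole interval $[r-\tfrac12, r+\tfrac12]$, so $\pi_{P_{i+1}}(a)$ covers all of $P_{i+1}$, in particular the point $\pi_{P_{i+1}}(b(t_{i+1}))$; choose $s_{i+1}$ realizing it. Proposition \ref{Closeness(Not dual)} then gives $d(a(s_{i+1}), b(t_{i+1})) \le D\K(t_{i+1})$ for a $D$ depending only on $C$, so $b(t_{i+1})$ lies within $D\K(t_{i+1})$ of $[x,y]$. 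Since $\pi_b(x) \in b[t_{i-2}, t_i]$ and the excursion bound together with monotonicity of $\K$ give $t_{i+1} - t_{i-2} \le 3C\K(t_{i+1})$, the triangle inequality yields
$$ d(\pi_b(x), [x,y]) \le d(\pi_b(x), b(t_{i+1})) + d(b(t_{i+1}), a(s_{i+1})) \le (3C+D)\K(t_{i+1}), $$
and a final application of Lemma \ref{3.2} — using that $\pi_b(x)$ sits a bounded number of excursion steps from $b(t_{i+1})$ along $b$ — replaces $\K(t_{i+1})$ by $\K(\pi_b(x))$ up to a multiplicative constant, giving $d(\pi_b(x), [x,y]) \le D'\K(\pi_b(x))$ with $D'$ depending only on $C$. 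The case $y \in h_{i-4}^-$ is the mirror image, and when $y$ lies in the middle segment one bounds $d(\pi_b(x), y)$ directly by $5C\K(t_{i+2})$ and then applies Lemma \ref{3.2}; so $b$ is $\K$-slim and hence $\K$-contracting by Lemma \ref{kslim}.

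The bulk of this is bookkeeping identical to Proposition \ref{ExcursionToContracting}. The one genuinely new ingredient, and the place I expect to have to be careful, is the pole-matching step above: because $\{h_i\}$ is no longer assumed dual to $b$, the hypothesis of Proposition \ref{Closeness(Not dual)} is phrased via projection to the pole $P_i$ rather than to $b$, so one must verify that the comparison geodesic $[x,y]$ contains a point with the prescribed projection to $P_{i+1}$ — which is exactly where Lemma \ref{crossing} is invoked. Crucially, this route loses no power of $\K$ (the constants only ever multiply, never the argument of $\K$), which is precisely the motivation for passing through non-dual chains here rather than immediately dualizing.
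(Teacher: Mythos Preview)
Your argument has a genuine gap at the step ``$\pi_b(x)$ lies on the segment $b[t_{i-2}, t_i]$, exactly as in Proposition \ref{ExcursionToContracting}.'' That conclusion in Proposition \ref{ExcursionToContracting} relies on the curtains being \emph{dual} to $b$: there, $h_j = \pi_b^{-1}\big(b[t_j - \tfrac12, t_j + \tfrac12]\big)$, so membership in $h_j^{\pm}$ literally records where $\pi_b(x)$ falls on $b$. In the non-dual setting this fails completely. The curtain $h_i$ is defined via projection to some other geodesic, and the half-space decomposition $h_i^- \cup h_i \cup h_i^+$ bears no a priori relation to $\pi_b$. Knowing that $x \in h_{i-1}^+ \cap h_i^-$ and that $h_j$ separates $b(t_{j-1})$ from $b(t_{j+1})$ constrains where $b$ crosses the curtains, but says nothing about where $\pi_b(x)$ sits --- the geodesic $[x,\pi_b(x)]$ can cross arbitrarily many of the $h_j$. (Picture the $h_i$ as strips tilted relative to $b$: a point $x$ high above $b$ can sit in $h_i^-$ while its foot $\pi_b(x)$ is far to one side of $b(t_i)$.)

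The paper handles exactly this: after finding the point $z \in [x,y]$ close to $b(t_{i+1})$ via Proposition \ref{Closeness(Not dual)}, it splits into two subcases according to whether $\pi_b(x) \in h_{i-5}^+$ or not. If so, $\pi_b(x)$ is within $5C\K(t_{i+1})$ of $b(t_{i+1})$ and you finish as you wrote. If not, then the geodesic $[x,\pi_b(x)]$ crosses $h_{i-4}, h_{i-3}, h_{i-2}$, so a second application of Proposition \ref{Closeness(Not dual)} to this geodesic produces $z' \in [x,\pi_b(x)] \cap h_{i-3}$ with $d(z', b(t_{i-3})) \le D\K(t_{i-3})$; since $z'$ lies on $[x,\pi_b(x)]$ one has $\pi_b(z') = \pi_b(x)$, forcing $d(z',\pi_b(x)) \le d(z', b(t_{i-3})) \le D\K(t_{i-3})$ as well. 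Chaining $\pi_b(x) \to z' \to b(t_{i-3}) \to b(t_{i+1}) \to z$ then gives the bound. Your pole-matching discussion and the rest of the case split are fine; what is missing is precisely this control on $\pi_b(x)$.
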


\begin{proof}

 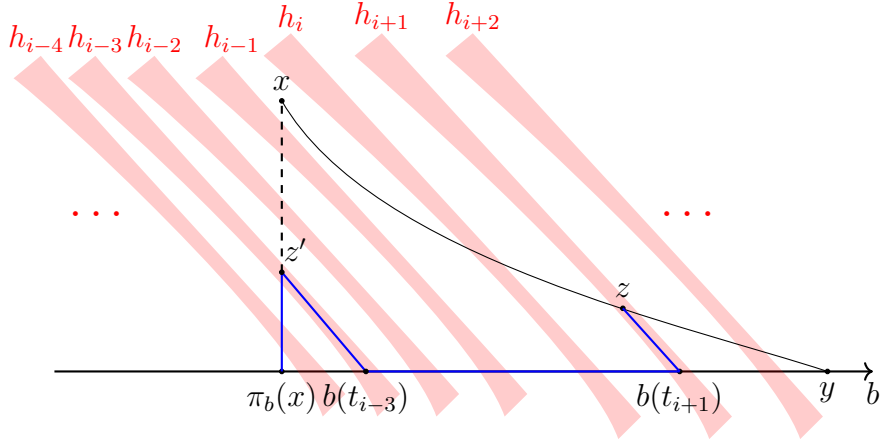
\begin{figure}[ht]
     \centering
\begin{tikzpicture}[scale=.6]

    \draw[thick, ->] (-3,-.5) to (15,-.5);

    \node [below] at (15,-.5) {$b$};

    \draw[fill] (2,5.5) circle [radius= 0.05];
    \draw[fill] (14,-.5) circle [radius= 0.05];

    \fill[red, opacity=0.2] (3.9+3.5+2,-2) .. controls(4.2+3.5+2-2+1,0)  and (3.8+3.5+2-7+1,5)   .. (3.6+3.5+2-.5-7.5+1+.5-1,6-.5+1) -- (5.5-.4+3.5+2-.4-.5 - 8+1+.5-1,6+1) .. controls (5.3-.4+3.5+2-.4 -7+1,5)  and (4.9-.4+3.5+2-.4-2+1,0) .. (5.2-.4+3.5+2-.4, -1.5) -- cycle;

     \fill[red, opacity=0.2] (3.9+3.5+2+2,-2) .. controls(4.2+3.5+2-2+1+2,0)  and (3.8+3.5+2-7+1+2,5)   .. (3.6+3.5+2-.5-7.5+1+.5-1+2,6-.5+1) -- (5.5-.4+3.5+2-.4-.5 - 8+1+.5-1+2,6+1) .. controls (5.3-.4+3.5+2-.4 -7+1+2,5)  and (4.9-.4+3.5+2-.4-2+1+2,0) .. (5.2-.4+3.5+2-.4+2, -1.5) -- cycle;

      \fill[red, opacity=0.2] (3.9+3.5+2+4,-2) .. controls(4.2+3.5+2-2+1+4,0)  and (3.8+3.5+2-7+1+4,5)   .. (3.6+3.5+2-.5-7.5+1+.5-1+4,6-.5+1) -- (5.5-.4+3.5+2-.4-.5 - 8+1+.5-1+4,6+1) .. controls (5.3-.4+3.5+2-.4 -7+1+4,5)  and (4.9-.4+3.5+2-.4-2+1+4,0) .. (5.2-.4+3.5+2-.4+4, -1.5) -- cycle;

    \fill[red, opacity=0.2] (3.9+3.5+2-2.5,-2+.5) .. controls(4.2+3.5+2-2+1-2.5,0+.5)  and (3.8+3.5+2-7+1-2.5,5+.5)   .. (3.6+3.5+2-.5-7.5+1+.5-1-2.5+1,6-.5+1-1+.5) -- (5.5-.4+3.5+2-.4-.5 - 8+1+.5-1-2.5+1,6+1-1+.5) .. controls (5.3-.4+3.5+2-.4 -7+1-2.5,5+.5)  and (4.9-.4+3.5+2-.4-2+1-2.5,0+.5) .. (5.2-.4+3.5+2-.4-2.5, -1.5+.5) -- cycle;

    \fill[red, opacity=0.2] (3.9+3.5+2-4,-2+.5) .. controls(4.2+3.5+2-2+1-4,0+.5)  and (3.8+3.5+2-7+1-4,5+.5)   .. (3.6+3.5+2-.5-7.5+1+.5-1-4+1,6-.5+1-1+.5) -- (5.5-.4+3.5+2-.4-.5 - 8+1+.5-1-4+1,6+1-1+.5) .. controls (5.3-.4+3.5+2-.4 -7+1-4,5+.5)  and (4.9-.4+3.5+2-.4-2+1-4,0+.5) .. (5.2-.4+3.5+2-.4-4, -1.5+.5) -- cycle;

    \fill[red, opacity=0.2] (3.9+3.5+2-4-1.5+.2,-2+.5) .. controls(4.2+3.5+2-2+1-4-1.5+.2,0+.5)  and (3.8+3.5+2-7+1-4-1.5+.2,5+.5)   .. (3.6+3.5+2-.5-7.5+1+.5-1-4+1-1.5+.2,6-.5+1-1+.5) -- (5.5-.4+3.5+2-.4-.5 - 8+1+.5-1-4+1-1.5+.2,6+1-1+.5) .. controls (5.3-.4+3.5+2-.4 -7+1-4-1.5+.2,5+.5)  and (4.9-.4+3.5+2-.4-2+1-4-1.5+.2,0+.5) .. (5.2-.4+3.5+2-.4-4-1.5+.2, -1.5+.5) -- cycle;

    \fill[red, opacity=0.2] (3.9+3.5+2-4-1.5-1,-2+.5) .. controls(4.2+3.5+2-2+1-4-1.5-1,0+.5)  and (3.8+3.5+2-7+1-4-1.5-1,5+.5)   .. (3.6+3.5+2-.5-7.5+1+.5-1-4+1-1.5-1,6-.5+1-1+.5) -- (5.5-.4+3.5+2-.4-.5 - 8+1+.5-1-4+1-1.5-1,6+1-1+.5) .. controls (5.3-.4+3.5+2-.4 -7+1-4-1.5-1,5+.5)  and (4.9-.4+3.5+2-.4-2+1-4-1.5-1,0+.5) .. (5.2-.4+3.5+2-.4-4-1.5-1, -1.5+.5) -- cycle;


    \draw (2,5.5) .. controls(4,2) and (11,.5) ..  (14,-.5);

    \node [above] at (2,5.5) {$x$};
    \node [below] at (14,-.5) {$y$};
    \draw[fill] (2,-.5) circle [radius= 0.05];
    \node [below] at (2,-.5) {$\pi_b(x)$};

    \node [above, red] at (-3.4,6.3) {$h_{i-4}$};
    \node [above, red] at (-2.1,6.3) {$h_{i-3}$};
    \node [above, red] at (-.8,6.3) {$h_{i-2}$};
    \node [above, red] at (.9,6.3) {$h_{i-1}$};
    \node [above, red] at (2.2,6.8) {$h_{i}$};
    \node [above, red] at (4.2,6.8) {$h_{i+1}$};
    \node [above, red] at (6.2,6.8) {$h_{i+2}$};
    \draw[fill] (10.75,-.5) circle [radius= 0.05];
    \node [below] at (10.75,-.5) {$b(t_{i+1})$};
    \draw[fill] (9.5,.9) circle [radius= 0.05];
    \node [above] at (9.5,.9) {$z$};
    
    \draw[ thick, blue] (10.75,-.5) to (9.5,.9);
    \draw[ thick, blue] (10.75,-.5) to (3.85,-.5);
    \draw[ thick, blue] (3.85,-.5) to (2,1.7);
    \draw[ thick, blue] (2,-.5) to (2,1.7);

    \draw[fill] (2,1.7) circle [radius= 0.05];
    \node [above] at (2.3,1.7) {$z'$};

    \draw[fill] (3.85,-.5) circle [radius= 0.05];
    \node [below] at (3.85,-.5) {$b(t_{i-3})$};

    \node [above, red] at (-2,2.5) {\Large$\cdots$};
    \node [above, red] at (11,2.5) {\Large$\cdots$};

    \draw[dashed, thick] (2,1.7) to (2,5.5);

\end{tikzpicture}
\caption{A picture for the proof of Proposition \ref{NonDualto Contracting}. No matter how many curtains in our $\K$-chain meet $[x,\pi_b(x)]$, we get that $z'$ and $b(t_{i-3})$ will still be bounded by $\K$. The blue path represents a path from $\pi_b(x)$ to $z$, and all geodesic subpaths of the blue path are bounded by $\K$ via the proof of Proposition \ref{NonDualto Contracting}. }
    \label{fig:NonDual Curtains}
\end{figure}

Again we follow arguments in Proposition \ref{ExcursionToContracting}. Let $x \in X$ and $y \in b$. There exists a minimal $i$ such that $x \in h^-_i$. To prove $b$ is $\K$-slim, we again split where $y \in b$ can be placed on $b$ into three cases: $y \in h_{i+2}^+$, $y \in h^-_{i-4}$, and $y \in [b(t_{i-4} - \frac{1}{2}), b(t_{i+2}+\frac{1}{2})]$. 

When $y \in h_{i+2}^+$, we get by Proposition \ref{Closeness(Not dual)} that there exists $z \in [x,y]\cap h_{i+1}$ such that $d(z, b(t_{i+1}))\leq D \K(t_{i+1}) $ where $D$ depends on $C$. Now, since the $\{h_i\}$ are not necessarily dual to $b$, it is possible that $[x, \pi_b(x)]$ will cross elements in our $\K$-chain $\{h_i\}$. This gives two subcases. In the subcase that $\pi_b(x) \in h^+_{i-5}$ Then $d(\pi_b(x), b(t_{i+1}) \leq 5C\K(t_{i+1})$ and so $d(\pi_b(x), z) \leq (5C+D)\K(t_{i+1}) \leq D' \K(\pi_b(x))$ for some $D'$ depending on $C$ by Lemma \ref{3.2}. In the subcase that $\pi_b(x) \notin h^+_{i-5}$, then $[x,\pi_b(x)]$ crosses $h_{i-4},h_{i-3}, h_{i-2}$ and, by Proposition \ref{Closeness(Not dual)}, there exists a $z' \in [x,\pi_b(x)]\cap h_{i-3}$ such that $d(z', b(t_{i-3}) \leq D\K(t_{i-3})$ for some $D$ depending on $C$. Since $\pi_b(z') = \pi_b(x)$, it must be true that also $d(z', \pi_b(x)) \leq D \K(t_{i-3}).$ Thus, we get, 

\begin{align*}
    d(\pi(x), z) &\leq d(\pi(x), z') + d(z', b(t_{i-3})) +d(b(t_{i-3}),b(t_{i+1})) + d(b(t_{i+1}), z)\\
    &\leq D\K(t_{i-3}) + D\K(t_{i-3}) + 4C\K(t_{i+1}) + C\K(t_{i+1})\\
    &\leq (2D+5C)\K(t_{i+1}).
\end{align*} This gives, by Lemma \ref{3.2}, there exists a $D''$ such that  $d(\pi(x), z) \leq D''\K(\pi_b(x))$. The cases of  $y \in h^-_{i-4}$ and $y \in [b(t_{i-4} - \frac{1}{2}), b(t_{i+2}+\frac{1}{2})]$ are done with a similar argument of the above case and Lemma \ref{ExcursionToContracting}, so we will omit them. See Figure \ref{fig:NonDual Curtains}.
\end{proof}

\begin{corollary}\label{dualizing excursion }
    If a $\K$-chain $\{h_i\}$ meets a geodesic $b$ at points $\{b(t_i)\}$ such that $t_{i+1}-t_i \leq C\K(t_{i+1})$ for some $C>0$, then $b$ is a $\K$-curtain-excursion geodesic.
\end{corollary}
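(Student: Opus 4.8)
The plan is to obtain the conclusion by composing the two propositions just proved, using $\K$-contraction as the intermediate property. First I would note that the hypothesis is exactly the setting of Proposition \ref{NonDualto Contracting}: we are given a $\K$-chain $\{h_i\}$ meeting $b$ at points $b(t_i)$ with $t_{i+1}-t_i\leq C\K(t_{i+1})$ and with $h_i,h_{i+1}$ being $C\K(t_{i+1})$-separated, and we do \emph{not} assume the $h_i$ are dual to $b$. Applying Proposition \ref{NonDualto Contracting} gives that $b$ is $\K$-slim with a constant $D'\geq 0$ depending only on $C$, and then Lemma \ref{kslim} upgrades this to the statement that $b$ is $\K$-contracting (with contracting constant controlled by $D'$, hence by $C$ and $\K$).

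Second, I would feed the ray $b$ — now known to be $\K$-contracting — into Proposition \ref{ContractingToExcursion}. That proposition produces, for any $\K$-contracting ray, times $\{t_i'\}$ and a chain $\{h_i'\}$ dual to $b$ with $b(t_i')\in h_i'$ satisfying $t_{i+1}'-t_i'\leq C'\K(t_{i+1}')$ and $h_i',h_{i+1}'$ being $C'\K(t_{i+1}')$-separated, where $C'$ depends only on the contracting constant of $b$. By definition, a geodesic that is dual to such a $\K$-chain is a $\K$-curtain-excursion geodesic, which is precisely the asserted conclusion.

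There is no real obstacle here; the content of the corollary is simply that the implications ``$b$ meets a (non-dual) $\K$-chain $\Rightarrow b$ is $\K$-contracting $\Rightarrow b$ is dual to a $\K$-chain'' can be composed, i.e.\ that one can always \emph{dualize} a $\K$-chain meeting $b$ at the cost of enlarging the excursion constant in a way that depends only on the original one and on $\K$. The only points needing attention are the bookkeeping of constants (tracking that $C$ controls $D'$ via Proposition \ref{NonDualto Contracting}, and that $D'$ in turn controls the new excursion constant via Proposition \ref{ContractingToExcursion}) and the mild observation that $b$ is a geodesic ray, so that Lemma \ref{kslim} and Proposition \ref{ContractingToExcursion}, both stated for rays, apply directly.
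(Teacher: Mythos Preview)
Your proposal is correct and matches the paper's own proof essentially verbatim: the paper simply invokes Proposition \ref{NonDualto Contracting} to conclude $b$ is $\K$-contracting, then Proposition \ref{ContractingToExcursion} to produce a dual $\K$-chain. Your additional remarks about tracking constants and about $b$ being a ray are accurate but more detailed than what the paper records.
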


\begin{proof}
    Proposition \ref{NonDualto Contracting} shows $b$ is $\K$-contracting, and Proposition \ref{ContractingToExcursion} finds a dual $\K$-chain to $b$.
\end{proof}

\section{Continuously Injecting into a Hyperbolic Boundary}\label{Section 4}

4.1 \textbf{ The Curtain Model and Projecting Geodesics.}\vspace{.25cm}

We now introduce a hyperbolic space that $\p_\K X$ can continuously inject into as formulated in \cite{PSZ22}. As a set, we fix the CAT(0) space $X$ but change the metric.

\begin{definition}[Curtain Model] \label{Curtain Model}
We consider the space $(X, \hat{d})$ where the distance between two points $x,y\in X$ is defined by \begin{center}
    $\hat{d}(x,y) = \displaystyle \sum_{L=1}^\infty \frac{d_L(x,y)}{L^3}$
\end{center}
where $d_L$ is the $L$-metric defined in Definition \ref{Lmetric}. We call $(X, \hat{d})$ the \textit{curtain model} of $X$ and denote it as $\widehat{X}$ as seen in the introduction.
\end{definition}

The function $\hd$ is indeed a metric since each of the $d_L$'s are metrics. Also, every isometry of $X$ is also an isometry of $X_L$ by Theorem \ref{X_Lhyp}. This implies also that any isometry of $X$ is an isometry of $\widehat{X}$. Hence $\text{Isom}\hspace{.1cm} X$ acts on $\widehat{X}$ by isometries \cite[Lemma 9.2]{PSZ22}.

\begin{remark}
    In \cite{PSZ22}, the curtain model metric used was $D(x,y) = 
    \sum_{L=1}^\infty \lambda_Ld_L(x,y)$ where $\lambda_L \in (0,1)$ such that $\sum_{L=1}^\infty \lambda_L < \sum_{L=1}^\infty L\lambda_L < \infty$. This could open the door for generalities whenever geometric properties are due to a family of metrics. However, for computations in proofs, it is nice to work with a well known series such as $\sum_{L=1}^\infty \lambda_L = \sum_{L=1}^\infty \frac{1}{L^3}$. Also, choosing $\lambda_L = \frac{1}{L^3}$ allows our theory to work when regarding all $\K$ such that $\K^4$ is sublinear. In fact, for any  $n > 2$, choosing the sequence $\lambda'_L = \frac{1}{L^n}$ results in the necessity for $\K$ to be such that $\K^{n+1}$ is sublinear. (See \cite[Remark 9.1]{PSZ22} for the requirement of $n>2$ and the proof of Lemma \ref{unbnd} for the requirement that $\K^{n+1}$ be sublinear.)
\end{remark}

\begin{theorem}[Theorem 9.10 in \cite{PSZ22}]
There exists a $\delta$ such that $\widehat{X}$ is a quasigeodesic $\delta$-hyperbolic space.
\end{theorem}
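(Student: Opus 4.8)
The plan is to get both conclusions at once — that $\widehat X$ is quasigeodesic and $\delta$-hyperbolic — by verifying a guessing-geodesics (Bowditch-type) criterion for coarsely connected metric spaces, using the CAT(0) geodesics of $X$ as the candidate ``geodesics'' in $\widehat X$. First I would record the cheap estimates on $\hat d$. Along a CAT(0) geodesic $b$, a unit segment $b[r-\tfrac12,r+\tfrac12]$ carries at most one dual curtain, so $d_L(b(s),b(t))\le 2$ for every $L$ whenever $|s-t|\le 1$, whence $\hat d(b(s),b(t))\le 2\sum_{L\ge1}L^{-3}=:S$; together with $d_L(x,y)<1+d(x,y)$ this shows $\widehat X$ is coarsely connected. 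I would \emph{not} try to verify the Gromov four-point condition termwise in $L$: the Gromov products of the $X_L$ need not be mutually compatible, and $\sum_L\lambda_L\min(a_L,b_L)$ is not bounded below by $\min\big(\sum_L\lambda_L a_L,\ \sum_L\lambda_L b_L\big)$ up to an additive constant in general, so the criterion route is cleaner.

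For the criterion I would take the path system $\big\{A(x,y)\big\}_{x,y\in X}$ where $A(x,y)$ is the set of unit-spaced sample points of the CAT(0) geodesic $[x,y]$ — a coarse path joining $x$ to $y$ with consecutive points at $\hat d\le S$. Two conditions must be checked: (i) if $\hat d(x,y)\le S$ then $\operatorname{diam}_{\hat d}A(x,y)\le M$; and (ii) for all $x,y,z\in X$, $A(x,y)\subseteq N^{\hat d}_M\big(A(x,z)\cup A(z,y)\big)$, for a uniform $M$. Condition~(i) I would obtain from coarse monotonicity: using the no-bigon lemma~\ref{nobigons} one shows $d_L(x,\cdot)$ is non-decreasing along $[x,y]$ up to a bounded additive error, \emph{uniformly in} $L$, so $\hat d(x,p)\le\hat d(x,y)+O(1)$ for every $p\in[x,y]$; since $\hat d(x,y)$ is capped this caps $\operatorname{diam}_{\hat d}A(x,y)$.

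Condition~(ii) is the substance, and I would prove it one scale at a time and then sum. Fix $L$. In the $\delta_L$-hyperbolic $X_L$ choose honest $d_L$-geodesics $[x,y]_L$, $[x,z]_L$, $[z,y]_L$, which form a $\delta_L$-thin triangle. The key claim is that the CAT(0) geodesic $[x,y]$ $d_L$-fellow-travels $[x,y]_L$ within $D_L=O(\delta_L)$: both join $x$ to $y$ and cross essentially the same sequence of $L$-separated curtains (again by the no-bigon/monotonicity machinery underlying Theorem~\ref{X_Lhyp}), and two coarsely $L$-monotone paths with the same endpoints stay uniformly $d_L$-close. Transferring $\delta_L$-thinness through these fellow-traveling constants gives $[x,y]\subseteq N^{d_L}_{D_L'}\big([x,z]\cup[z,y]\big)$ with $D_L'=O(\delta_L)$. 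The curtain machinery bounds $\delta_L$ (and the ambient quasigeodesic constants of $X_L$) by $O(L)$, the $L$-separation parameter playing the role of a dimension; hence for $p\in[x,y]$ one gets $\hat d\big(p,\ A(x,z)\cup A(z,y)\big)\le\sum_{L\ge1}D_L'\,L^{-3}=O\!\big(\sum_{L\ge1}L^{-2}\big)<\infty$, which is condition~(ii). The criterion then yields that $\widehat X$ is a quasigeodesic $\delta$-hyperbolic space.

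The main obstacle is condition~(ii), and inside it the uniform-in-$L$ comparison of CAT(0) geodesics with $d_L$-geodesics: one must establish that every CAT(0) geodesic is a coarsely monotone path in every $X_L$, with control that degrades only linearly in $L$, so that the errors summed over all scales converge. This is exactly where the weight $\lambda_L=L^{-3}$ is used for this theorem — any summable weight with $\sum_L L\lambda_L<\infty$, i.e.\ $\lambda_L=L^{-n}$ with $n>2$, suffices for hyperbolicity (cf.\ \cite[Remark 9.1]{PSZ22}), which is a strictly milder demand than the $\K^4$-sublinearity later needed in Lemma~\ref{unbnd} to keep $\K$-contracting rays unbounded in $\widehat X$. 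By contrast, coarse connectedness, the quasigeodesic-space property, and condition~(i) are routine consequences of $d_L(x,y)<1+d(x,y)$ and Lemma~\ref{nobigons}.
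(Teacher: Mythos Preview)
The paper does not prove this statement; it is quoted from \cite{PSZ22} as background, so there is no in-paper proof to compare against. Your overall plan --- verify a Bowditch/guessing-geodesics criterion using CAT(0) geodesics as the candidate family --- is the natural route and is indeed the strategy of \cite{PSZ22}. Condition~(i) is fine: it follows from the coarse reverse triangle inequality along CAT(0) geodesics (essentially Lemma~\ref{unparametized quasi geos} here, which rests on Lemma~\ref{backtrack} rather than Lemma~\ref{nobigons}).

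There is, however, a genuine gap in your argument for condition~(ii). You obtain, for each $L$, a point $q_L\in[x,z]\cup[z,y]$ with $d_L(p,q_L)\le D_L'$, and then write
\[
\hat d\big(p,\,[x,z]\cup[z,y]\big)\;\le\;\sum_{L\ge1}\frac{D_L'}{L^3}.
\]
But $\hat d(p,A)=\inf_{q\in A}\sum_L\lambda_L d_L(p,q)$, and in general $\inf_q\sum_L\lambda_L d_L(p,q)\ge\sum_L\lambda_L\inf_q d_L(p,q)$, not $\le$; the $q_L$ may be different for different $L$, so the termwise bounds do not sum. This is precisely the incompatibility you (correctly) flagged when rejecting a termwise verification of the four-point condition --- it resurfaces here in disguise. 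Concretely, knowing the CAT(0) triangle is $d_L$-thin for every $L$ does not by itself make it $\hat d$-thin.

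The repair requires producing a \emph{single} comparison point $q\in[x,z]\cup[z,y]$, chosen independently of $L$, with $d_L(p,q)=O(L)$ for all $L$. In \cite{PSZ22} this is done by exploiting the CAT(0) structure to pick $q$ uniformly (via a median/comparison-point argument on the CAT(0) triangle) and then bounding each $d_L(p,q)$ by curtain-counting; this is where the real work lies, and it is not captured by your fellow-traveling sketch. Once that uniform choice is in hand, your summability observation $\sum_L L\cdot L^{-3}<\infty$ finishes the job exactly as you say.
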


It is also known that 

Since $\widehat{X}$ is not a proper metric space, we cannot define its boundary via equivalence classes of geodesic directions as we do for $\p_\K X$. Rather, we will define the Gromov-boundary of $\widehat{X}$ in terms of equivalence classes of sequences. For more of the following information, see \cite{BK02} or \cite{BH99}. Recall that a sequence $\{x_n\}$ \textit{Gromov-converges to infinity} if $\liminf_{i, j \rightarrow \infty}\left(x_{i} \cdot x_{j}\right)_{\ob}=\infty$ where $\left(x_{i} \cdot x_{j}\right)_{\ob}$ is the \textit{Gromov product}, defined as

$$\left(x_{i} \cdot x_{j}\right)_{\ob} =\frac{1}{2}\left(\hat{d}\left(\ob, x_{i}\right)+\hat{d}\left(\ob, x_{j}\right)-\hat{d}\left(x_{i}, x_{j}\right)\right).$$

 \noindent Two sequences $\{x_i\}$ and $\{y_j\}$ are said to be in the same equivalence class if 

$$\liminf_{i, j \rightarrow \infty}\left(x_{i} \cdot y_{j}\right)_{\ob}=\infty.$$

\noindent Thus, we can define the \textit{Gromov boundary} $\p\widehat{X}$ as the set of equivalence classes of sequences Gromov-converging to infinity. Given $s \in \p\widehat{X}$ and $r > 0$ the sets

$$U(s,r) = \left\{\left[\left\{y_i\right\}\right]\bigg| \liminf_{i, j \rightarrow \infty}\left(x_{i} \cdot y_{j}\right)_{\ob} \geq r  \text{ for some }\{x_i\} \in s\right\}.$$ form a basis for the standard topology on the Gromov boundary $\p\widehat{X}$. Furthermore, isometries of $\widehat X$ induce homeomorphisms of $\p \widehat{X}$ with respect to this topology.
\end{definition}

We desire to create a map $\p_\K X \longrightarrow \p\widehat{X}$ that sends $b^\infty \in \p_\K X$ to some $s \in \p \widehat{X}$. Ideally, the geodesic $b$ will project to some unbounded set in $\widehat{X}$, and increasing sequences of points on $b$ will Gromov-converge to infinity in $\p \widehat{X}$. However, it is not obvious that $b$ will always be unbounded in $\widehat{X}$. A potential problem is the strength of our sublinear function $\K$, as the following example will show. 

\begin{example}\label{example}
Let $\K=\sqrt{\cdot}$, i.e. the square root function, and consider the following space:

Put $X_{\sqrt{\cdot}}$ to be the subset of $\mathbb{R}^{2}$ between the square root function and the $x$-axis. Put $X_{i}=\left[i^{2}+\frac{1}{2},(i+1)^{2}-\frac{1}{2}\right] \times[0, \infty)$. Now denote $X=X_{\sqrt{\cdot}} \cup \bigcup_{i=1}^{\infty} X_{i}$. Notice that, when $b$ is the $x$-axis, $b$ is $\K$-contracting, and a $\K$-chain dual to $b$ is $\left\{h_{b, t_{i}}\right\}$ where $b\left(t_{i}\right)$ is the point $\left(i^{2}, 0\right)$. See Figure \ref{fig:3}, where $X$ is the blue-shaded space.

\begin{figure}[ht]
    \centering
\begin{tikzpicture}[scale=1]
 \pgfplotsset{
  width= 360,
  height= 170
  }
\begin{axis}[
    axis lines = middle,
    xmin=0, xmax=27,
    ymin=0, ymax=6,
    xtick={0,1,4,9,16,25},
    ytick={1,2,3,4,5}]
    \draw[teal!30, fill = teal!30  ] plot((0,0) -- (0.5,0) -- (0.5,6) -- (0,6) -- cycle;

   \draw[teal!30, fill = teal!30  ] plot((1.5,0) -- (3.5,0) -- (3.5,6) -- (1.5,6) -- cycle;
   \draw[teal!30, fill = teal!30  ] plot((4.5,0) -- (8.5,0) -- (8.5,6) -- (4.5,6) -- cycle;
   \draw[teal!30, fill = teal!30  ] plot((9.5,0) -- (15.5,0) -- (15.5,6) -- (9.5,6) -- cycle;
   \draw[teal!30, fill = teal!30  ] plot((16.5,0) -- (24.5,0) -- (24.5,6) -- (16.5,6) -- cycle;
    \draw[teal!30, fill = teal!30, ] plot((25.5,0) -- (27,0) -- (27,6) -- (25.5,6) -- cycle;

    \draw[dotted] (0.5,0) to (0.5, sqrt(0.5);
    \draw[dotted] (1.5,0) to (1.5, sqrt(1.5);
    \draw[dotted] (3.5,0) to (3.5, sqrt(3.5);
    \draw[dotted] (4.5,0) to (4.5, sqrt(4.5);
    \draw[dotted] (8.5,0) to (8.5, sqrt(8.5);
    \draw[dotted] (9.5,0) to (9.5, sqrt(9.5);
    \draw[dotted] (16.5,0) to (16.5, sqrt(16.5);
    \draw[dotted] (15.5,0) to (15.5, sqrt(15.5);
    \draw[dotted] (24.5,0) to (24.5, sqrt(24.5);
    \draw[dotted] (25.5,0) to (25.5, sqrt(25.5);

    \node[below] at (2.5,6) {\tiny $X_1$};
    \node[below] at (6.5,6) {\tiny $X_2$};
    \node[below] at (12.5,6) {\tiny $X_3$};
    \node[below] at (20.5,6) {\tiny $X_4$};
    
\addplot [name path = A,
    -latex,
    domain = 0:27, dashed,
    samples = 1000] {sqrt(x)} ;
 
\addplot [name path = B,
    -latex,
    domain = 0:27,] {0} ;
 
\addplot [teal!30,] fill between [of = A and B, soft clip={domain=0:27}];

\end{axis}
\end{tikzpicture}
    \caption{The space $X=X_{\sqrt{\cdot}} \cup \bigcup_{i=1}^{\infty} X_{i} $}

    \label{fig:3}
\end{figure}
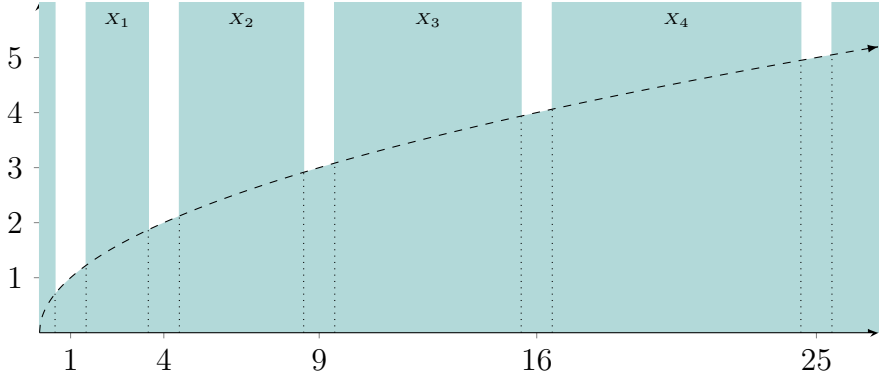


In this constructed space, any $L$-chain dual to $b$ must only have one curtain in each $X_{i}$. This is because any two curtains in $X_{i}$ dual to $b$ are not $L$-separated for any $L$. Though it is possible to find a larger $L$-chain such that its curtains are not dual to $b$, the remainder of Section \ref{Section 4} uses dual chains as a lower bound for distance. By observation we have that, for any $t$ with $t_{i} \leq t \leq t_{i+1}$, the largest $L$-chain dual to $[\ob, b(t)]$ is $2L$  if $ L \leq i$ and  $2i$ if $L>i$.

Thus, as $i\rightarrow \infty$, we have

$$
\lim_{i \to \infty} \hat{d}\left(\ob, b\left(t_{i}\right)\right)=\lim_{i \to \infty}\sum_{L=1}^{\infty} \frac{d_{L}\left(\ob, b\left(t_{i}\right)\right)}{L^{3}} \geq \sum_{L=1}^{\infty} \frac{2L}{L^{3}}.
$$

Notice how the right hand side of the inequality is a finite value. This argument is not sufficient to show that $diam(b)$ is unbounded in $\widehat{X}$. Thus, it is possible $b$ would not define a point in $\partial \widehat{X}$. It may still be possible to find collections of $L$-chains meeting $b$ that show $b$ has infinite diameter in $\widehat{X}$, but finding such $L$-chains will be tougher. Since most applications of sublinear functions involve a $\K$ such that $\K^4$ is sublinear, such as logarithmic functions, we can afford to impose this stronger condition for cleaner arguments. Thus, for the remainder of the paper, we assume $\K$ is any continuous, bijective, and monotonically increasing function such that $\K^4$ is sublinear.
\end{example}

 The following lemmas show that geodesics in $X$ project to unparameterized quasi-geodesics in $\widehat{X}$. Lemmas \ref{gluingLchain} and \ref{backtrack} assist in the proof of Lemma \ref{unparametized quasi geos}. Lemma \ref{unparametized quasi geos} shows us that projections behave like rough geodesics. It should be noted that Lemma \ref{unparametized quasi geos} shows the same result as \cite[Propostion 9.5]{PSZ22} but for when $\sum_{L=1}^\infty \lambda_L = \sum_{L=1}^\infty \frac{1}{L^3}$. Since the proof of Lemma \ref{unparametized quasi geos} was written before the the updated version of \cite{PSZ22}, we still include Lemma \ref{unparametized quasi geos} for completeness.

 \begin{lemma}[Lemma 2.13 in \cite{PSZ22}]\label{gluingLchain}
Suppose that $c=\left\{h_1, \ldots, h_n\right\}$ and $c^{\prime}=\left\{h_1^{\prime}, \ldots h_m^{\prime}\right\}$ are $L$ chains with $n>1$ and $m>L+1$, where $h_1^{\prime-}$ is the halfspace not containing $h_2^{\prime}$. If $h_1^{\prime-} \cap h_j \neq \varnothing$ for all $j$ and $h_n^+ \cap h_i^\prime \neq \emptyset$ for all $i$, then $c^{\prime \prime}=\left\{h_1, \ldots, h_{n-1}, h_{L+2}^{\prime}, \ldots, h_m^{\prime}\right\}$ is an $L$-chain of cardinality $n+m-L-2$.
\end{lemma}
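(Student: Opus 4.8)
\emph{Proof proposal.}
The plan is to fix coherent orientations on the two chains and then reduce everything to a single positioning statement. The cardinality claim is immediate once the curtains listed in $c''$ are pairwise distinct: $\{h_1,\dots,h_{n-1}\}$ and $\{h'_{L+2},\dots,h'_m\}$ are honest sub-chains of $c$ and $c'$, so each has distinct members, and the positioning claim below shows no $h_j$ with $j\le n-1$ equals any $h'_k$ with $k\ge L+2$; hence $|c''|=(n-1)+(m-L-1)=n+m-L-2$. To orient the chains, note that since $h_1'^-$ is by convention the side of $h'_1$ away from $h'_2$, the chain $c'$ is coherently oriented with $h'_i\subset h_j'^-$ for $i<j$ and nested halfspaces $h_1'^-\subseteq h_2'^-\subseteq\cdots$; orient $c$ compatibly, so that $h_i\subset h_j^-$ for $i<j$ and $h_i\subset h_j^+$ for $i>j$, whence $h_j\subset h_n^-$ for $j\le n-1$ and $h_n^+\subseteq h_j^+$ for $j\le n-1$. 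Using that curtains are path-connected (Lemma~\ref{starconvexity}) and that a connected set disjoint from a curtain lies in one of its two halfspaces, the hypotheses read: every $h_j$ has a point in $h_k'^-$ for all $k$, and every $h'_k$ has a point in $h_n^+$.

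The heart of the argument is the positioning claim: \emph{for all $j\le n-1$ and all $k\ge L+2$, the curtains $h_j$ and $h'_k$ are disjoint, with $h_j\subset h_k'^-$ and $h'_k\subset h_j^+$.} By the nesting of the $h_i'^-$ it suffices to take $k=L+2$, and since $h_j$ already has a point in $h_1'^-\subseteq h_{L+2}'^-$, disjointness from $h'_{L+2}$ automatically gives $h_j\subset h_{L+2}'^-$; so the only thing to exclude is $h_j\cap h'_{L+2}\ne\varnothing$. Assume this holds. Since $h_j$ is path-connected with a point in $h_1'^-$ and a point in $h'_{L+2}$, and since each $h'_i$ with $1\le i\le L+1$ separates $h_1'^-$ from $h'_{L+2}$, the curtain $h_j$ must meet every $h'_i$ with $1\le i\le L+1$, hence it meets each of $h'_1,\dots,h'_{L+2}$. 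For each such $i$, a point of $h'_i\cap h_j$ lies in $h_j\subset h_n^-$, while $h'_i$ also has a point in $h_n^+$ by hypothesis; being connected, $h'_i$ then meets $h_n$ as well. Thus $\{h'_1,\dots,h'_{L+2}\}$ is a chain of $L+2$ curtains each meeting both $h_j$ and $h_n$ — contradicting the fact that $h_j$ and $h_n$ are distinct curtains of the $L$-chain $c$, hence $L$-separated. This gives disjointness, and $h'_{L+2}\subset h_j^+$ follows because $h'_{L+2}$ misses $h_j$ and has a point in $h_n^+\subseteq h_j^+$.

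Granting the positioning claim, $c''$ is a chain: within each sub-chain this is inherited, and at the seam $h_{n-1}$ separates $h_{n-2}$ from $h'_{L+2}$ (since $h_{n-2}\subset h_{n-1}^-$ and $h'_{L+2}\subset h_{n-1}^+$), while $h'_{L+2}$ separates $h_{n-1}$ from $h'_{L+3}$ (since $h_{n-1}\subset h_{L+2}'^-$ and $h'_{L+3}\subset h_{L+2}'^+$). The $L$-separation of a pair lying in one sub-chain is again inherited; for a cross pair $(h_j,h'_k)$ the positioning claim places $h_{n-1}$ (when $j<n-1$) or $h'_{L+2}$ (when $k>L+2$) strictly between $h_j$ and $h'_k$, so any chain crossing both members of the pair also crosses that intermediate curtain and is therefore of length at most $L$. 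This reduces the whole statement to the single seam pair $(h_{n-1},h'_{L+2})$.

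\textbf{Main obstacle.} The delicate point — and the reason the index shift on the $c'$-side is exactly $L+1$ — is the $L$-separation of the seam pair $h_{n-1}$ and $h'_{L+2}$, which inside $c''$ is no longer separated by any third curtain of $c''$. The plan here is to show that any chain crossing both of them is forced to cross one of the \emph{discarded} curtains: because $h_{n-1}$ pokes into $h_1'^-$ on one side while $h'_{L+2}$ lies on the far side of $h'_1$, such a chain must cross $h'_1$ (or, running the symmetric bookkeeping on the $c$-side, must cross $h_n$), and then its length is at most $L$ since $h'_1$ and $h'_{L+2}$ (respectively $h_{n-1}$ and $h_n$) are $L$-separated in $c'$ (respectively $c$). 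Making the ``forced to cross'' step precise — controlling exactly how $h_{n-1}$ sits relative to $h'_1,\dots,h'_{L+1}$ and how $h_n$ sits relative to $h'_{L+2}$ — is where the real care lies, and it is the only place where the full strength of $c$ and $c'$ being $L$-chains, rather than merely chains, is used.
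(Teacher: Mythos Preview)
The paper does not prove this lemma: it is quoted verbatim from \cite{PSZ22} and used as a black box, so there is no in-paper argument to compare against. Your write-up is therefore being judged on its own, and most of it is solid: the orientation setup, the positioning claim for $k\ge L+2$, the verification that $c''$ is a chain, and the reduction of $L$-separation for all cross pairs to the single seam pair $(h_{n-1},h'_{L+2})$ are all correct and cleanly argued.

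The one genuine gap is exactly where you flag it. Your sketch for the seam pair says a curtain $g$ meeting both $h_{n-1}$ and $h'_{L+2}$ is ``forced to cross $h'_1$'' (or symmetrically $h_n$). That inference is not valid: you only know $h_{n-1}$ has \emph{a point} in $h_1'^-$, not that $h_{n-1}\subset h_1'^-$, so $g$ may meet $h_{n-1}$ at a point already lying in $h_1'^+$ and never touch $h'_1$. The symmetric version fails for the same reason, since nothing prevents $h'_{L+2}$ from meeting $h_n$.

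The fix is small and sits inside your own positioning argument. Rerun that argument with $k=L+1$: if $h_j\cap h'_{L+1}\ne\varnothing$ for some $j\le n-1$, then $h_j$ (being connected with a point in $h_1'^-$) meets each of $h'_1,\dots,h'_{L+1}$, and each of these $L+1$ curtains also meets $h_n$ exactly as before; this contradicts $L$-separation of $h_j$ and $h_n$. Hence $h_j\subset h_{L+1}'^-$ for every $j\le n-1$. In particular the \emph{discarded} curtain $h'_{L+1}$ genuinely separates $h_{n-1}$ from $h'_{L+2}$, so every curtain meeting both must meet $h'_{L+1}$, and then $L$-separation of $h'_{L+1}$ and $h'_{L+2}$ in $c'$ bounds the chain by $L$. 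This is the precise content of your remark that the index shift is ``exactly $L+1$'': one needs that extra dropped curtain to serve as the intermediate for the seam pair.
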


 \begin{lemma}[Corollary 3.2 in \cite{PSZ22}] \label{backtrack}
If $b$ is a CAT(0) geodesic and $t_1<t_2<t_3$, then any $L$-chain c separating $b(t_2)$ from $\left\{b(t_1), b(t_3)\right\}$ has cardinality at most $L^{\prime}=1+\left\lfloor\frac{L}{2}\right\rfloor$.
\end{lemma}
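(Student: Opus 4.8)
The plan is to argue by contradiction, extracting from a long $L$-chain $c=\{k_1,\dots,k_n\}$ separating $b(t_2)$ from $\{b(t_1),b(t_3)\}$ a chain of cardinality roughly $2n$, every member of which meets two fixed curtains of $c$; since those two curtains are $L$-separated, this forces $2n\lesssim L$. The factor $2$ is exactly a ``doubling'' coming from the fact that $b$ must cross the region spanned by $c$ twice: once travelling from $b(t_1)$ to $b(t_2)$, and once from $b(t_2)$ to $b(t_3)$. (The cases $n\le 2$ are immediate, so assume $n$ large.)

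First I would fix notation. Orient $c$ so that $b(t_2)\in k_1^-\subsetneq k_2^-\subsetneq\cdots\subsetneq k_n^-$ and $b(t_1),b(t_3)\in k_j^+$ for every $j$. Since $b$ is a CAT(0) geodesic, $\{s:b(s)\in k_j^-\}$ is an interval $(u_j,v_j)$; disjointness of the $k_j$ and the chain condition then force $t_1\le u_n<u_{n-1}<\cdots<u_1<t_2<v_1<\cdots<v_n\le t_3$, with $b(u_j),b(v_j)\in k_j$. The sub-chain $\{k_2,\dots,k_{n-1}\}$ is an $L$-chain separating $b(u_1)$ from $b(u_n)$, and also $b(v_1)$ from $b(v_n)$, so the bound $d_L(x,y)<1+d(x,y)$ from Definition \ref{Lmetric} gives $n-2<u_1-u_n$ and $n-2<v_n-v_1$, hence $(u_1-u_n)+(v_n-v_1)>2n-4$.

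Next I would carry out the doubling step. Choosing times spaced slightly more than $1$ apart inside $(u_n,u_1)$ and inside $(v_1,v_n)$, taking the corresponding curtains dual to $b$, and gluing them (through $b(t_2)$) into a single chain $\hat c$, one gets $|\hat c|\ge (u_1-u_n)+(v_n-v_1)-O(1)>2n-O(1)$. Each $h=h_{b,r}$ in $\hat c$ separates $b(u_n)$ from $b(v_n)$, which both lie on the path-connected set $k_n$ (Lemma \ref{starconvexity}), so $h$ meets $k_n$; and, by the same token, $h$ meets $k_1$ once one knows that $\pi_b(k_1)$ contains the whole interval $[u_n,v_n]$ (so that $k_1$ has points on both sides of $h$). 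Granting that, $\hat c$ is a chain meeting both $k_1$ and $k_n$, which are $L$-separated, so $|\hat c|\le L$; combined with the length estimate this yields $2n-O(1)\le L$, and a careful accounting of the constants gives $n\le 1+\lfloor L/2\rfloor$.

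The main obstacle is the claim that every curtain of $\hat c$ meets $k_1$: those whose $b$-parameter lies well outside the innermost window $(u_1,v_1)$ do not have $b(u_1),b(v_1)$ on opposite sides, so meeting $k_1$ must be read off from the geometry of $k_1$ itself — namely that $k_1$ reaches across the entire backtracking segment — which I would establish using the crossing lemma (Lemma \ref{crossing}) applied to how $b$ enters and leaves $k_1$, together with the location of the pole of $k_1$; some extra care is needed in the degenerate case where the dip $v_1-u_1$ is short. This is precisely the content underlying Corollary 3.2 of \cite{PSZ22}.
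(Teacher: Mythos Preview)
The paper does not prove this lemma; it is quoted from \cite{PSZ22} without argument, so there is no in-paper proof to compare against. Your doubling intuition---that $b$ must traverse the chain twice, once between $b(t_1)$ and $b(t_2)$ and once between $b(t_2)$ and $b(t_3)$, yielding roughly $2n$ curtains dual to $b$ that all meet a fixed $L$-separated pair---is indeed the mechanism behind the bound.

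That said, your write-up has a self-defeating step. In step~1 you assert that $\{s:b(s)\in k_j^-\}$ is an interval ``since $b$ is a CAT(0) geodesic,'' which amounts to assuming the half-space $k_j^-$ is convex. But the very hypothesis of the lemma places $b(t_1),b(t_3)\in k_j^+$ while $b(t_2)\in[b(t_1),b(t_3)]$ lies in $k_j^-$; this exhibits $k_j^+$ as \emph{non}-convex. So whatever reasoning you had in mind for step~1 cannot be ``half-spaces are convex along geodesics,'' because applied to $k_j^+$ it would make the hypothesis vacuous and the entire construction unnecessary. You need instead to define $u_j,v_j$ as the innermost parameters in $(t_1,t_2)$ and $(t_2,t_3)$ at which $b$ meets $k_j$; the nesting $u_n\le\cdots\le u_1<t_2<v_1\le\cdots\le v_n$ then follows from the chain condition $k_1^-\subset\cdots\subset k_n^-$ alone, without any convexity.

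The obstacle you flag is also genuine, and your proposed fix via Lemma~\ref{crossing} does not work: nothing about the pole of $k_1$ forces $\pi_b(k_1)$ to cover the full window $(u_n,v_n)$. The clean repair is to abandon the pair $k_1,k_n$ in favour of an \emph{adjacent} pair, say $k_{n-1},k_n$. Curtains $h_{b,r}$ with $r\in(u_{n-1}+\tfrac12,\,v_{n-1}-\tfrac12)$ separate $b(u_{n-1})$ from $b(v_{n-1})$ and (by nesting) also $b(u_n)$ from $b(v_n)$, hence meet both $k_{n-1}$ and $k_n$ via path-connectedness; there are at least $v_{n-1}-u_{n-1}-2\ge 2(n-3)-2$ of them, and $L$-separation of $k_{n-1},k_n$ forces $n\le\lfloor L/2\rfloor+O(1)$. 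Obtaining the sharp constant $1+\lfloor L/2\rfloor$ needs a tighter count than your sketch provides.
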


\begin{lemma}\label{unparametized quasi geos}
Let $b :I \rightarrow X$ be a geodesic ray in $(X,d)$, then for any $t_1,t_2,t_3\in I$ such that $t_1<t_2<t_3$ with $b(t_1) = x, b(t_2)=y, b(t_3)=z$, we have $\hat{d}(x,z) \geq \hd(x,y) + \hd(x,z) - C$   where $C$ is a constant independent of $b$. \end{lemma}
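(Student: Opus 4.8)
The plan is to prove the (correctly stated) inequality $\hd(x,z)\ge \hd(x,y)+\hd(y,z)-C$ by first establishing, for each auxiliary metric $d_L$, a bound of the form
\[
d_L(x,z)\ \ge\ d_L(x,y)+d_L(y,z)-\varepsilon_L
\]
with $\varepsilon_L$ \emph{linear} in $L$, and then summing against the weights $1/L^3$. Fix $L\ge 1$; let $c_1$ be a maximal $L$-chain separating $x$ from $y$ and $c_2$ a maximal $L$-chain separating $y$ from $z$, so that $d_L(x,y)=1+|c_1|$ and $d_L(y,z)=1+|c_2|$. Orient the curtains coherently, so $x\in h^-$, $y\in h^+$ for every $h\in c_1$, and $y\in k^-$, $z\in k^+$ for every $k\in c_2$.

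The first move is to discard the ``backtracking'' curtains. The curtains $h\in c_1$ with $z\in h^-$ form a subchain, hence an $L$-chain, separating $b(t_2)$ from $\{b(t_1),b(t_3)\}$, so by Lemma~\ref{backtrack} there are at most $L':=1+\lfloor L/2\rfloor$ of them. Deleting these yields an $L$-chain $c_1'=\{h_1,\dots,h_n\}$ with $x\in h_j^-$ and $\{y,z\}\subseteq h_j^+$ for all $j$, and $|c_1'|\ge|c_1|-L'$. Symmetrically, deleting the at most $L'$ curtains $k\in c_2$ with $x\in k^+$ produces an $L$-chain $c_2'=\{k_1,\dots,k_m\}$ with $\{x,y\}\subseteq k_i^-$ and $z\in k_i^+$ for all $i$, and $|c_2'|\ge|c_2|-L'$. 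In particular, each of $c_1'$, $c_2'$ already separates $x$ from $z$.

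The main step is to concatenate $c_1'$ and $c_2'$ into one long $L$-chain separating $x$ from $z$, via Lemma~\ref{gluingLchain} applied with $c:=c_1'$ and $c':=c_2'$ (so $h_i':=k_i$). The halfspace of $k_1$ not containing $k_2$ is $k_1^-$, which contains both $x$ and $y$; since the halfspaces of a curtain are path-connected (curtains are closed and $X$ is locally path-connected) and each $h_j$ separates $x$ from $y$, any path from $x$ to $y$ inside $k_1^-$ meets every $h_j$, giving $k_1^-\cap h_j\ne\varnothing$. Dually, $h_n^+$ contains $y$ and $z$ while each $k_i$ separates $y$ from $z$, so $h_n^+\cap k_i\ne\varnothing$. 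When $n\ge 2$ and $m\ge L+2$, Lemma~\ref{gluingLchain} yields an $L$-chain $c''$ of cardinality $n+m-L-2$; since $x$ lies in the outer halfspace of $h_1$ and $z$ in the outer halfspace of $k_m$, coherence of $c''$ forces every curtain of $c''$ to separate $x$ from $z$, whence
\[
d_L(x,z)\ \ge\ 1+|c_1'|+|c_2'|-L-2\ \ge\ d_L(x,y)+d_L(y,z)-(2L'+L+3).
\]
If instead $n\le 1$ or $m\le L+1$, the hypotheses of Lemma~\ref{gluingLchain} may fail, but then one of $c_1',c_2'$ has size at most $L+1$ while the other still separates $x$ from $z$, and a direct count gives the same inequality up to enlarging the additive constant; in every case $d_L(x,z)\ge d_L(x,y)+d_L(y,z)-\varepsilon_L$ with $\varepsilon_L\le 2L+5$.

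Summing over $L$ then gives $\hd(x,z)\ge \hd(x,y)+\hd(y,z)-C$ with $C:=\sum_{L\ge1}\varepsilon_L/L^3\le 2\sum_{L\ge1}L^{-2}+5\sum_{L\ge1}L^{-3}<\infty$, which depends only on the weight sequence $1/L^3$ and not on $b$ or the chosen points, as required. I expect the gluing step to be the main obstacle: one must orient $c_1,c_2,c_1',c_2'$ consistently, confirm that after removing the backtracking curtains the two half-chains genuinely ``point toward'' $x$ and toward $z$ so that Lemma~\ref{gluingLchain} applies with the right halfspaces, and dispose cleanly of the short-chain cases. The technical crux is using path-connectedness of curtain halfspaces (rather than convexity, which fails for curtains) to verify the two intersection conditions.
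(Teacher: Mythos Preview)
Your proof is correct and follows essentially the same route as the paper: prune backtracking curtains via Lemma~\ref{backtrack}, concatenate via Lemma~\ref{gluingLchain} to get a defect linear in $L$, then sum against $L^{-3}$. The paper prunes only the second chain (yielding the slightly sharper defect $\tfrac{3L}{2}+5$) and asserts directly that $h_1^-$ contains $[x,y]$, whereas your symmetric pruning of both chains makes the two hypotheses of Lemma~\ref{gluingLchain} verifiable by the same path-connectedness argument---arguably cleaner.

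One small caveat: your stated reason for path-connectedness of halfspaces (``curtains are closed and $X$ is locally path-connected'') is not valid, since CAT(0) spaces need not be locally path-connected. The correct argument is that halfspaces are star-shaped over the defining geodesic $\alpha$: for $p\in h^-$ one has $\pi_\alpha(q)=\pi_\alpha(p)$ for every $q\in[p,\pi_\alpha(p)]$, so $[p,\pi_\alpha(p)]\subseteq h^-$, and the trace $\alpha\cap h^-$ is an interval, giving a path in $h^-$ between any two of its points.
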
 

\begin{proof}
Let $L \in \mathbb{N}.$ Put $c$ and $c'$ the maximal $L-$chains realizing $ d_L(x,y) = 1 + |c|$ and $ d_L(y,z) = 1+ |c'|$. Assume $|c'| \geq \frac{3L}{2}+3$. By Lemma \ref{backtrack}, the maximum number of curtains in $c'$ that cross $[x,y]$ is $1 + \lfloor \frac{L}{2}\rfloor$. Denote $c''\subseteq c'$ the set with such curtains deleted so that $|c''| \geq |c'| - 1 - \lfloor \frac{L}{2}\rfloor $. Ordering $c'' = \{h_1, h_2 \ldots, h_n\}$, we denote $h_1^+$ as the halfspace containing $h_2$. Notice that $h_1^-$ must contain $[x,y]$, which means every curtain in $c$ meets $h_1^-$. Thus, by Lemma \ref{gluingLchain},  $d_L(x,z) \geq |c| + (|c'| - 1 - \lfloor \frac{L}{2}\rfloor) - (L+2) \geq d_L(x,y) + d_L(y,z) -  \frac{3L}{2}- 5$. Note that this inequality is also trivially true  if $|c'| \leq \frac{3L}{2}+3$.  Thus, we get that
\begin{align*}
    \hd(x,z) &= \displaystyle \sum_{L=1}^\infty \frac{d_L(x,z)}{L^3} \\
             &\geq \displaystyle \sum_{L=1}^\infty \frac{d_L(x,y) + d_L(y,z) -  \frac{3L}{2}- 5}{L^3} \\
             &= \hd(x,y) + \hd(y,z) - \displaystyle \sum_{L=1}^\infty( \frac{3}{2L^2} + \frac{5}{L^3}).
\end{align*}

Setting $C = \displaystyle \sum_{L=1}^\infty( \frac{3}{2L^2} + \frac{5}{L^3})$ gives the desired result.
\end{proof}
\begin{remark}
    Lemma \ref{unparametized quasi geos} shows a ``coarse reverse triangle inequality" for geodesics projecting into $\widehat{X}$. Looking at the the definition of an unparameterized quasi geodesic (see \cite[Section 2.1]{MMS12}), one can use Lemma \ref{unparametized quasi geos} to parameterize a projected geodesic into a quasi-geodesic in $\widehat{X}$.
\end{remark}

In order the show that any $\K$-contracting geodesic defines a point in $\p\hX$, we must also show that the geodesic will have infinite diameter with respect to $\hd$.

\begin{lemma}\label{unbnd}
Let $X$ be a CAT(0) space and $b$ be a $\K$-contracting geodesic ray based at $\ob$ where $\K$ is a sublinear function such that $\K^4$ is sublinear.  Then $b$ has infinite diameter with respect to the $\hd$ metric.
\end{lemma}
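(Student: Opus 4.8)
The plan is to build, for each $L$, an explicit $L$-chain separating $\ob$ from $b(t)$ of cardinality $\gtrsim L^{3}$ once $t$ is large, and then sum the contributions $d_L(\ob,b(t))/L^{3}$ to force $\widehat d(\ob,b(t))\to\infty$. The starting point is Proposition \ref{ContractingToExcursion} (more precisely, its proof): the $\K$-contracting ray $b$, with contracting constant $D$, is dual to a chain $\{h_i\}$ meeting $b$ at the pole-centres $b(t_i)$, where the $t_i$ are chosen so that $t_{i+1}-t_i=10D\K(t_{i+1})$ and $h_i,h_{i+1}$ are $\bigl(10D\K(t_{i+1})+3\bigr)$-separated.

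The first ingredient I would isolate is the elementary fact that a chain of curtains whose \emph{consecutive} pairs are all $L$-separated is in fact an $L$-chain. Indeed, if $k$ is a curtain meeting both $g_i$ and $g_j$ with $i<j$, then it meets every $g_m$ with $i\le m\le j$: $k$ is path-connected by Lemma \ref{starconvexity}, and a path in $k$ from a point of $g_i\subseteq g_m^{-}$ to a point of $g_j\subseteq g_m^{+}$ must cross the closed curtain $g_m$, since $g_m^{\pm}$ are the open components of its complement. Consequently any chain meeting $g_i$ and $g_j$ is simultaneously a chain meeting $g_m$ and $g_{m+1}$ for each intermediate $m$, hence has cardinality at most $L$.

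Now fix $L$, set $T_L=\K^{-1}\!\bigl((L-3)/(10D)\bigr)$, and let $N_L=\#\{i:\ t_{i+1}\le T_L\}$. For $m\le N_L$ the pair $h_m,h_{m+1}$ is $L$-separated, so $\{h_1,\dots,h_{N_L+1}\}$ is an $L$-chain by the previous paragraph; being dual to $b$ at the points $b(t_m)$, it separates $\ob=b(0)$ from $b(t)$ whenever $t>t_{N_L+1}+\tfrac12$. Thus $d_L(\ob,b(t))\ge N_L$ for all sufficiently large $t$. It remains to show that $N_L$ grows at least like $L^{3}$. Since $\K$ is concave, increasing and sublinear, a routine estimate controls the overshoot ($t_{N_L+2}\le 2T_L$ for large $L$) and hence bounds the gaps $t_{i+1}-t_i=10D\K(t_{i+1})\le 20D\K(T_L)=O(L)$ for $i\le N_L+1$, which together with $t_{N_L+1}\ge T_L-O(L)$ yields $N_L\gtrsim T_L/L$. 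Finally, $\K^{4}$ sublinear means $\K(t)=o(t^{1/4})$, so $\K^{-1}(s)\ge s^{4}$ for all large $s$; therefore $T_L\gtrsim L^{4}$ and $N_L\gtrsim L^{3}$.

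To conclude, for any $L_0$ there is a threshold beyond which $\widehat d(\ob,b(t))=\sum_{L\ge 1} d_L(\ob,b(t))/L^{3}\ge\sum_{L=1}^{L_0}N_L/L^{3}$, and the right-hand side tends to $\infty$ with $L_0$ because $N_L/L^{3}$ is bounded below by a positive constant for all large $L$. Hence $\widehat d(\ob,b(t))\to\infty$ and $b$ has infinite diameter in $\widehat X$. I expect the main obstacle to be the first ingredient — upgrading ``consecutively $L$-separated'' to ``an $L$-chain'' — together with the accompanying, slightly delicate verification that the overshoot past $T_L$ is controlled so that $N_L\gtrsim T_L/L$ is an honest inequality; this is exactly where sublinearity and concavity of $\K$ are used. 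The weight exponent $3$ is calibrated so that the divergence criterion becomes precisely ``$\K^{4}$ sublinear'', and Example \ref{example} (with $\K=\sqrt{\,\cdot\,}$, where $\K^{4}(t)=t^{2}$) shows this hypothesis cannot be dropped.
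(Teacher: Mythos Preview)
Your proof is correct and shares with the paper the crucial use of the $\K$-chain $\{h_i\}$ from Proposition~\ref{ContractingToExcursion}, together with the observation that an initial segment $\{h_1,\dots,h_{N}\}$ forms an $L$-chain whenever all its consecutive separations are at most $L$. Where the two arguments differ is in how they extract unboundedness of $\hat d$. The paper fixes $i$, sets $m=\lceil C\K(t_i)\rceil$, and shows that the \emph{single} summand $d_m(\ob,b(t_i))/m^3$ already dominates $(t_i-t_1)/(C\K(t_i)+1)^4-1$, which diverges by the $\K^4$-sublinearity hypothesis; no series needs to be summed. You instead fix $L$, invert $\K$ to locate the threshold $T_L$, show $N_L\gtrsim T_L/L\gtrsim L^3$ via $\K^{-1}(s)\gtrsim s^4$, and conclude that the partial sums $\sum_{L\le L_0}N_L/L^3$ diverge. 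Your route is a little longer---it requires the overshoot estimate $t_{N_L+2}\le 2T_L$ and the explicit inversion asymptotics---but it has the virtue of spelling out the ``consecutively $L$-separated $\Rightarrow$ $L$-chain'' step (via path-connectedness of curtains), which the paper uses without comment. The paper's approach is more economical and also immediately gives the quantitative lower bound used later in Remark~\ref{d hat is faster than k} and Theorem~\ref{persistent characterization}.
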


\begin{proof}
Denote $\{h_i\}$ the $\K$-chain dual to $b$ and denote $b(t_i)$ the center of the poles of each $h_i$. Put $C>0$ as the excursion constant. We know that $\lfloor C\K(t_1)\rfloor = n$ for some $n \in \mathbb{N}.$ Thus, for any $i>1$, there exists some  $m\geq n$ such that $\lceil C\K(t_{i})\rceil = m$. Since, by definition of a $\K$-chain, $t_{j+1} - t_j \leq C\K(t_{j+1})$, we have  \begin{align*}
    t_{i} - t_1 &=\displaystyle\sum_{j=1}^{i} (t_{j+1} - t_{j}) \\
                  &\leq \displaystyle\sum_{j=1}^{i} C\K(t_{j+1}) \\
                  &\leq  \displaystyle\sum_{j=1}^{i} C\K(t_{i})\\
                  &=i C\K(t_{i}) \\
                  &\leq i m.
\end{align*}
Thus, we have $i \geq \frac{t_{i} - t_1}{m}$. Notice, since $\lceil C\K(t_{i})\rceil = m$, this implies $C\K(t_{i}) + 1\geq m$. Now, we claim $d_m(\mathfrak{o}, b(t_i)) \geq i-1 \geq \displaystyle \frac{t_i - t_1}{m} -1$. Indeed,  $[\ob, b(t_i)]$ crosses curtains $\{h_1, h_2, \ldots, h_{i-1}\}$, and since each $h_{j-1} $ and $h_j$ are $C\K(t_j)$-separated, they are also $m$-separated since $C\K(t_j) \leq m$ for all $j \leq i-1$. Thus, it follows that \[ \frac{d_m(\mathfrak{o}, b(t_i))}{m^3} \geq \displaystyle \frac{\frac{t_i - t_1}{m}-1}{m^3} \geq \frac{t_i - t_1}{m^4}-1 \geq \displaystyle \frac{t_i - t_1}{(C\K(t_{i}) + 1)^4} - 1.\]
 
 By assumption, $\K^4$ is a sublinear function. Hence, as $i$ grows to infinity, the right hand side of the above inequality grows to infinity.  Since $\hd(\ob, b(t_i)) \geq \frac{d_m(\mathfrak{o}, b(t_i))}{m^3}$, we conclude that \[\lim_{i\rightarrow \infty} \hd(\ob, b(t_i))  = \infty.\]
 Hence, $b$ is unbounded in $\hX$.
 \end{proof}
 
 \begin{remark}\label{d hat is faster than k}
  Notice, in the above proof, we showed that for each $t_i$, we can find an $m \in \mathbb{N}$ such that
$$
\frac{d_{m}\left(\ob, b\left(t_{i}\right)\right)}{m^{3}} \geq \frac{t_{i}-t_{1}}{\left(c \K\left(t_{i}\right)+1\right)^{4}} -1.
$$

  This inequality also shows the possibility of $\K$-contracting rays having a \textit{$\K$-persistent shadow} as defined in \cite{DZ22}. We show a notion of equivalence between a ray being $\K$-contracting and having a $\K$-persistent shadow in Section \ref{Section 5}, and we postpone further conversation to that section.

\end{remark}

\noindent 4.2 \textbf{ Creating an injective map $ \partial_{\K}X \longrightarrow \p \widehat{X}$.} \vspace{.25cm}

Assume $X$ is a proper CAT(0) space with $\widehat X$ its curtain model and let $\K$ be a sublinear function such that $\K^4$ is sublinear. We have that $\K$-contracting geodesics in $X$ will be unbounded quasi-geodesics in $\widehat{X}$. Since each $b^\infty \in \p_\K X$ has only one geodesic emanating from $\ob$ in its equivalence class, we define $\varphi$ by

\begin{align*}
    \varphi: \p_\K X & \longrightarrow \p \widehat{X} \\
                    b^\infty &\longmapsto \big[ \{b(n)\}_{n \in \mathbb{N}} \big ]
\end{align*}

 Notably, the map is well defined since $b^\infty$ only has one geodesic emanating from $\ob$ (namely, $b$). Also, given any increasing sequence $\{t_i\}$ with $t_i \rightarrow \infty$, we get that $\{b(t_i)\}$ Gromov-converges to infinity and  $[\{b(t_i)\}] = [ \{b(n)\}]$. This fact is useful because we can choose any increasing sequence $\{b(t_i)\} \subset b$ that Gromov-converges to infinity to represent $\varphi(b^\infty)$ in our proofs. We now work to make $\varphi$ injective.

 \begin{lemma}\label{welldefined}
 Let $b_{1}$ be a $\K$-contracting geodesic ray. Denote $\left\{h_{i}\right\}$ the $\K$-chain dual to $b_1$ with center of poles $b_1\left(t_{i}\right)$. If a geodesic $b_{2}$ meets infinitely many of $\left\{h_{i}\right\}$, then $b_{2}$ is in a $\K$-neighborhood of $b_{1}$.
 \end{lemma}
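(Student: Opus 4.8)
The plan is to exploit the fact that $b_2$ meeting infinitely many of the $\K$-separated curtains $\{h_i\}$ forces $b_2$ to cross long consecutive runs of them, and then to apply the "closeness" machinery of Subsection 3.1 to pin $b_2$ near $b_1$ at the crossing points, and convexity of $b_1$ to interpolate in between. First I would argue that since $b_2$ meets infinitely many $h_i$ and the $h_i$ form a chain (hence are nested and linearly ordered), $b_2$ in fact crosses cofinitely many of them: once $b_2$ enters $h_i^+$ for some large $i$, it can never return to $h_j^-$ for $j < i$, so for all sufficiently large $i$ there are $s < s'$ on $b_2$ with $b_2(s) \in h_i^-$ and $b_2(s') \in h_i^+$. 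Thus $b_2$ crosses $h_{i-1}, h_i, h_{i+1}$ for all large $i$, and by Lemma \ref{crossing} there is $s_i$ with $\pi_{P_i}(b_2(s_i)) = \pi_{P_i}(b_1(t_i))$ (the center of the pole).

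Next I would invoke Proposition \ref{Closeness(Not dual)} — applied with the roles reversed, viewing $\{h_i\}$ as a $\K$-chain (dual to $b_1$, hence in particular met by $b_1$) and $b_2$ as the "other geodesic" crossing $h_{i-1}, h_i, h_{i+1}$ — to conclude that there is a $D > 0$, depending only on the excursion constant $C$ of $b_1$, such that $d(b_2(s_i), b_1(t_i)) \leq D\K(t_i)$ for all large $i$. This handles the points $b_2(s_i)$. To upgrade this to all of $b_2$, I would use that $b_2$ is itself a geodesic and the points $b_2(s_i)$ march off to infinity along it: for $s$ between consecutive $s_i$ and $s_{i+1}$, the point $b_2(s)$ lies on the geodesic segment $[b_2(s_i), b_2(s_{i+1})]$, whose endpoints are within $D\K(t_i)$ and $D\K(t_{i+1})$ of $b_1(t_i)$ and $b_1(t_{i+1})$ respectively; since $b_1$ is $\K$-contracting it is $\K$-Morse (Theorem 3.8 of \cite{QRT19}, available since $X$ is proper), so this quasi-geodesic quadrilateral side stays in a $\K$-neighborhood of $b_1$, and Lemma \ref{3.2} lets me replace the various inputs $\K(t_i), \K(t_{i+1})$ by $\K$ evaluated at the point in question. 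The initial segment of $b_2$ up to the first relevant $s_i$ is compact, hence trivially in a (uniform) $\K$-neighborhood after enlarging the constant.

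The main obstacle I anticipate is the bookkeeping around "consecutive crossings": $b_2$ is only assumed to \emph{meet} infinitely many $h_i$, not to meet all of them or to meet them in order, so I need to be careful that the sub-collection it crosses is genuinely cofinal and consists of runs of at least three consecutive curtains so that Proposition \ref{Closeness(Not dual)} applies at each step. This is where the chain/nesting structure (each $h_i$ separates $h_{i-1}$ from $h_{i+1}$, and $b_2$ cannot backtrack across a curtain it has crossed, cf. Lemma \ref{backtrack}) does the work, but it should be written out carefully. A secondary technical point is ensuring the constant in the final $\K$-neighborhood is uniform in $i$; this is exactly what Lemma \ref{3.2} is designed to provide, converting bounds of the form $d \leq D\K(t_i)$ near $b_1(t_i)$ into bounds of the form $d \leq D'\K(x)$ for $x$ the nearby point.
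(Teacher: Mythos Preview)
Your plan is essentially the paper's proof, and the cofinality argument plus the application of the closeness lemma are exactly right. Two minor points of divergence are worth noting. First, since the $\{h_i\}$ are \emph{dual} to $b_1$, you can invoke Lemma \ref{closeness} directly (with $\pi_{b_1}(b_2(s_i)) = b_1(t_i)$) rather than the non-dual Proposition \ref{Closeness(Not dual)}; both work, but the dual version is the natural fit here. Second, your interpolation step via the $\K$-Morse property is heavier than needed and has a small wrinkle: the concatenated path $[b_1(t_i), b_2(s_i)] * [b_2(s_i), b_2(s_{i+1})] * [b_2(s_{i+1}), b_1(t_{i+1})]$ has end-segments of length $\sim D\K(t_i)$, so its quasi-geodesic constants are not uniform in $i$, and the Morse gauge would a priori grow. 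The paper sidesteps this entirely with a triangle inequality: since $t_{i+1} - t_i \leq C\K(t_{i+1})$ by the $\K$-chain condition, one gets $d(b_2(s_i), b_2(s_{i+1})) \leq d(b_2(s_i), b_1(t_i)) + d(b_1(t_i), b_1(t_{i+1})) + d(b_1(t_{i+1}), b_2(s_{i+1})) \leq D''\K(t_i)$, and then any $p$ on the geodesic between $b_2(s_i)$ and $b_2(s_{i+1})$ satisfies $d(p, b_1) \leq d(p, b_2(s_i)) + d(b_2(s_i), b_1(t_i)) \leq (D'' + D)\K(t_i)$, with Lemma \ref{3.2} converting to $\K(p)$. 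This is both simpler and avoids the uniformity issue.
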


 \begin{figure}[ht]
    \centering
   \begin{tikzpicture}[scale=1]


    \fill[red, opacity=0.2] (2.8,-3) .. controls(3.1,-3)  and (3,.5)   .. (2.8,1.5) -- (3.8,1.5).. controls (3.6, .5)  and (3.5, -3) .. (3.8,-3) -- cycle;

    \fill[red, opacity=0.2] (2.8+4,-3) .. controls(3.1+4,-3)  and (3+4,.5)   .. (2.8+4,1.5) -- (3.8+4,1.5).. controls (3.6+4, .5)  and (3.5+4, -3) .. (3.8+4,-3) -- cycle;


    \draw [blue,thick, ->](1,-2) to (10,-2);
    \node [below, blue] at (10,-2) {$b_1$};

    \draw [->, thick] (1,1) .. controls (4,0) and (7,0) .. (10,1);
    \node[below] at (10,1) {$b_2$};

    \node[below] at (3.3, .44) {$q_i$};
    \draw[fill] (3.3,.44) circle [radius=0.05];

    \node[below] at (7.3, .38) {$q_{i+1}$};
    \draw[fill] (7.3,.38) circle [radius=0.05];

    \node[below] at (3.3, -2) {$b_1(t_i)$};
    \draw[fill] (3.3,-2) circle [radius=0.05];

    \node[below] at (7.3, -2) {$b_1(t_{i+1})$};
    \draw[fill] (7.3,-2) circle [radius=0.05];

    \node[below] at (5, .27) {$p$};
    \draw[fill] (5,.27) circle [radius=0.05];

    \node[red, above] at (3.3,1.5) {$h_i$};
    \node[red, above] at (7.3,1.5) {$h_{i+1}$};

\draw [decorate,
    decoration = {brace}] (2.95,-2) --  (2.95,.44);
    \node [left] at (2.95,-.78) {$D\kappa(t_{i}) \geq$};
\draw [decorate,
    decoration = {brace}] (7.65,.38) --  (7.65,-2);
    \node [right] at (7.65,-.81) {$\leq D\kappa(t_{i+1})$};
\draw [decorate,
    decoration = {brace}]  (7.3,-2.5) -- (3.3,-2.5);
    \node [below] at (5.3,-2.5) {$\leq D'\kappa(t_{i+1})$};

    \end{tikzpicture}
    \caption{A picture for the proof of Proposition \ref{welldefined}. We show that $b_2$ is in a $\K$-neighborhood of $b_1$.}
    \label{fig:bounding A_2}
\end{figure}
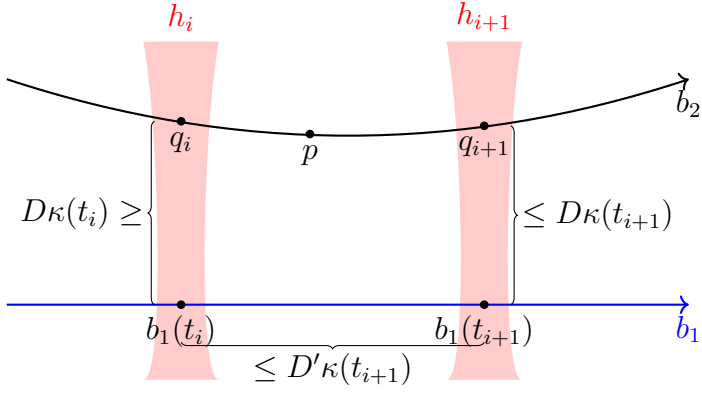
 
 \begin{proof}
 Since the $\left\{h_{i}\right\}$ are ordered, then $b_{2}$ crossing $h_{m}$ and $h_{n}$ implies that $b_{2}$ also crosses all curtains of $\left\{h_{i}\right\}$ between $h_{m}$ and $h_{n}$. Thus, $b_{2}$ must cross all but finitely many $h_{i}$. Removing such finitely many curtains will still result in a $\K$-chain dual to $b_1$, so we assume $b_{2}$ meets all of $\left\{h_{i}\right\}$ at points $q_{i} \in b_{2}$ such that $\pi_{b_1}(q_i) = b_1(t_i)$. Now, Lemma \ref{closeness} tells us that $d(q_{i}, b_{1}\left(t_{i}\right)) \leq D \K\left(t_{i}\right)$ for some $D>0$ depending only on the contracting constant $C$. Also, by definition of a $ \K$-chain and Lemma \ref{3.2}, $d(b_{1}\left(t_{i}\right), b_{1}\left(t_{i+1}\right)) \leq D^{\prime} \K\left(t_{i}\right)$ for some $D^{\prime}>0$ depending only on $C$. Thus, for any $p \in b_{2}$ between $q_{i}$ and $q_{i+1}$, we have

$$
\begin{aligned}
d\left(p, q_{i}\right) & \leq d\left(q_{i}, q_{i+1}\right) \\
& \leq d\left(q_{i}, b_1(t_{i}))+d\left(b_1(t_{i}), b_1(t_{i+1})\right )+d\left(b_1(t_{i+1}), q_{i+1}\right)\right.\\
& \leq D \K\left(t_{i}\right)+D^{\prime} \K\left(t_{i}\right)+D \K\left(t_{i+1}\right) \\
& \leq D^{\prime \prime} \K\left(t_{i}\right)
\end{aligned}
$$

for some $D^{\prime \prime}>0$ by Lemma \ref{3.2}. This gives, 

$$
\begin{aligned}
d\left(p, b_{1}\right) & \leq d\left(p, q_{i}\right)+d\left(q_{i}, b_1\left(t_{i}\right)\right) \\
& \leq\left(D^{\prime \prime}+ D\right) \K\left(t_{i}\right) \\
& \leq\left(D^{\prime \prime}+ D\right) \K(p).
\end{aligned}
$$

This is true for any $p \in$  $b_{2}$ between some $q_{i}$ and $q_{i+1}$, which gives us that $b_{2}$ is in a $\K$-neighborhood of $b_{1}$. See Figure \ref{fig:bounding A_2}.
 \end{proof}

 \begin{remark}\label{contrapositive}
 Note that, by the contrapositive, if $b_1$ and $b_2$ are $\K$-contracting rays that are not in the same $\K$-equivalence class, then each only crosses up to finitely many curtains in the other's  $\K$-chain.
 \end{remark}

\begin{proposition}[Injectivity of $\varphi$]\label{injective} 
Let $b_1^\infty,b_2^\infty \in \partial_{\K} X$ and let $b_1, b_2$ be the corresponding $\K$-contracting geodesic rays based at $\ob$. If $b_1^\infty \neq b_2^\infty$, then $\varphi(b_{1}^\infty) \neq \varphi(b_{2}^\infty)$ in $\partial \widehat{X}$.
\end{proposition}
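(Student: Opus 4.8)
Suppose for contradiction that $\varphi(b_1^\infty)=\varphi(b_2^\infty)$; the plan is to combine this assumption with $b_1^\infty\neq b_2^\infty$ to produce points on $b_2$ that are enormously far, in $\hd$, from $b_1$, contradicting the fact that the projections of $b_1$ and $b_2$ must then fellow travel in $\widehat X$. By Lemma \ref{unparametized quasi geos} and the remark following it, the projections of $b_1$ and $b_2$ to $\widehat X$ are quasi-geodesic rays based at $\ob$, and our assumption says they define the same point of $\p\widehat X$; since $\widehat X$ is a quasigeodesic hyperbolic space, standard hyperbolic geometry then supplies a constant $R\geq 0$ such that the image of $b_2$ lies in the $R$-neighbourhood (for $\hd$) of the image of $b_1$, so that for each $n$ there is $s_n\geq 0$ with $\hd\bigl(b_2(n),b_1(s_n)\bigr)\leq R$. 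On the other hand, since $b_1^\infty\neq b_2^\infty$, Remark \ref{contrapositive} provides an $N$ so that $b_2$ crosses none of the curtains $h_i$ of the $\kappa$-chain dual to $b_1$ for $i>N$; orienting each $h_i$ so that $\ob\in h_i^-$ and recalling $b_2(0)=\ob$, this forces $b_2\subset h_i^-$ for every $i>N$.

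First I would check that $s_n\to\infty$: if some subsequence of $\{s_n\}$ stayed bounded, then $\hd(\ob,b_2(n))$ would stay bounded along it (since $\hd\leq\zeta(3)(1+d)$ on bounded $X$-balls), contradicting that $b_2$ has infinite diameter in $\widehat X$ (Lemma \ref{unbnd}) together with the coarse monotonicity of $n\mapsto\hd(\ob,b_2(n))$ coming from Lemma \ref{unparametized quasi geos}. Now fix $K\in\mathbb N$ and let $C$ be the excursion constant of the $\kappa$-chain $\{h_i\}$. For every $n$ large enough that $s_n>t_{N+K}+1$, each of $h_{N+1},\dots,h_{N+K}$ satisfies $b_2(n)\in h_i^-$ and $b_1(s_n)\in h_i^+$, so $\{h_{N+1},\dots,h_{N+K}\}$ is a chain separating $b_2(n)$ from $b_1(s_n)$; being a consecutive sub-chain of the $\kappa$-chain, the argument in the proof of Lemma \ref{unbnd} shows it is an $m$-chain with $m=\lceil C\kappa(t_{N+K})\rceil$. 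Hence $d_m\bigl(b_2(n),b_1(s_n)\bigr)\geq 1+K$, and since $\hd\geq d_m/m^3$ the inequality $\hd(b_2(n),b_1(s_n))\leq R$ yields
\[
1+K\ \leq\ R\,m^3\ \leq\ R\bigl(C\kappa(t_{N+K})+1\bigr)^3 .
\]

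Finally I would contradict this for $K$ large. Since $t_{N+K}=t_N+\sum_{i=N+1}^{N+K}(t_i-t_{i-1})\leq t_N+CK\kappa(t_{N+K})$, for $K$ large we get $K\geq t_{N+K}/\bigl(2C\kappa(t_{N+K})\bigr)$, hence
\[
\frac{\kappa(t_{N+K})^{3}}{K}\ \leq\ \frac{2C\,\kappa(t_{N+K})^{4}}{t_{N+K}}\ \xrightarrow[K\to\infty]{}\ 0
\]
precisely because $\kappa^4$ is sublinear and $t_{N+K}\to\infty$; so the right-hand side of the previous display is $o(K)$, which is absurd once $K$ (and then $n$) is chosen large enough. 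Therefore $\varphi(b_1^\infty)\neq\varphi(b_2^\infty)$. The step I expect to be most delicate is the passage from ``$b_1,b_2$ define the same point of $\p\widehat X$'' to the uniform bound $\hd(b_2(n),b_1(s_n))\leq R$: this requires that quasi-geodesic rays to a common endpoint in a quasigeodesic hyperbolic space are uniformly Hausdorff close, which needs a little care because $\widehat X$ is not proper and the projected rays are a priori only unparametrized quasi-geodesics; the only other point needing attention is that the relevant consecutive sub-chain of the $\kappa$-chain is genuinely an $m$-chain, which is handled exactly as in the proof of Lemma \ref{unbnd}.
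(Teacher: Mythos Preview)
Your argument is essentially correct but follows a genuinely different route from the paper. The paper never argues by contradiction or appeals to fellow-travelling in $\widehat X$; instead it bounds the Gromov product $(x_i\cdot y_j)_\ob$ directly. Using Remark~\ref{contrapositive} for \emph{both} rays, it finds two $L_1$-separated curtains from the $\kappa$-chain of $b_1$ that separate the tail of $b_1$ from all of $b_2$, and likewise two $L_2$-separated curtains for $b_2$. These ``barrier'' pairs force, for each $L$, a lower bound $d_L(x_i,y_j)\ge d_L(x_i,x_{m+2})+d_L(y_j,y_{n+2})-L_1-L_2-L-4$ via the gluing Lemma~\ref{gluingLchain}, and summing over $L$ produces an explicit finite upper bound on $(x_i\cdot y_j)_\ob$ independent of $i,j$. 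In particular, the paper's injectivity argument never invokes the sublinearity of $\kappa^4$ a second time, whereas your contradiction step (the final asymptotic) uses it essentially. What the paper's approach buys is self-containment: it avoids the Morse-lemma step you flagged as delicate (uniform fellow-travelling of two unparametrized quasi-geodesic rays to a common endpoint in a non-proper quasigeodesic hyperbolic space) and gives a concrete bound. What your approach buys is economy of combinatorics: you use only one $\kappa$-chain and never need the gluing lemma, at the cost of importing a standard but slightly nonstandard-setting stability statement for $\widehat X$. Your flagged concern is the right one; once that fellow-travelling step is justified (e.g.\ via the quasi-ruler framework of Lemma~\ref{A.11} or a reference for quasigeodesic hyperbolic spaces), the rest of your argument goes through.
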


\begin{proof}
Since $b_{1}$ and $b_{2}$ are both $\K$-contracting rays, they both have $\K$-chains dual to them. Denote $\left\{h_{i}\right\}$ and $\left\{h_{j}^{\prime}\right\}$ as the $\K$-chains dual to $b_{1}$ and $b_{2}$ with centers of poles $\left\{b_{1}\left(t_{i}\right)\right\}$ and $\left\{b_{2}\left(t_{j}^{\prime}\right)\right\}$, respectively. By Remark \ref{contrapositive}, $b_{1}$ and $b_{2}$ only cross finitely many of the other's $\K$-chain. Put $h_{m}$ to be the last curtain of $\left\{h_{i}\right\}$ that $b_{2}$ crosses and $h_{n}^{\prime}$ to be the last curtain of $\left\{h_{j}\right\}$ that $b_{1}$ crosses. Consider the sequence $\left\{x_{i}\right\}$ where $x_{i}=b_1\left(t_{i}+1\right)$ and $\left\{y_{j}\right\}$ where $y_{j}=b_{2}\left(t_{j}^{\prime}+1\right)$.

Now, $h_{m+1}$ and $h_{m+2}$ are $L_{1}$-separated for $L_{1}= \lceil \K(t_{m+2}) \rceil$ and, for $i \geq m+2$, separate $x_{i}$ from ${b}_{2}$. Similarly, $h_{n + 1}^{\prime}$ and $h_{n+2}^{\prime}$ are $L_{2}$-separated for $L_{2} = \lceil \K(t'_{n+2}) \rceil$ and for $j \geqslant n+2$, separate $y_{j}$ from $b_{1}$. See the Figure \ref{fig:injectivity}.

\begin{figure}[ht]
    \centering
    \begin{tikzpicture}[scale=1]
    
    \draw [blue,thick, ->](1,-2.03) .. controls (5,-2.3) and (8,-2.3) .. (10.2,-5.5);
    \node [below, blue] at (10.2,-5.5) {$b_1$};
    \draw [red,thick, ->](1,-1.97) .. controls (5,-1.7) and (8,-1.7) .. (10.2,1.5);
    \node [above, red] at (10.2,1.5) {$b_2$};
    \draw[fill] (1,-2) circle [radius=0.05];
    \node[below] at (1,-2) {$\ob$};
    \draw[red!50, fill = red!50, opacity=0.3  ] plot((2.1,-3) -- (2.25,-2.98) -- (2,-.98-.1) -- (1.85,-1-.1) -- cycle;
     \draw[red!50, fill = red!50, opacity=0.3  ] plot((2.9,-3) -- (3.05,-2.98) -- (2.7,-.98-.1) -- (2.55,-1-.1) -- cycle;
    \draw[red!50, fill = red!50, opacity=0.3 ] plot((3.7,-3) -- (3.85,-2.98) -- (2.7+.7,-.98-.1) -- (2.55+.7,-1-.1) -- cycle;

    \draw[blue!50, fill = blue!50, opacity=0.3  ] plot((1.9,-3+.1) -- (2.05,-3.02+.1) -- (2.3,-.98) -- (2.15,-.96) -- cycle;

    \draw[blue!50, fill = blue!50, opacity=0.3  ] plot((1.9+.65,-3+.1) -- (2.05+.65,-3.02+.1) -- (2.3+.75,-.98) -- (2.15+.75,-.96) -- cycle;

    \draw[blue!50, fill = blue!50, opacity=0.3  ] plot((1.9+.65+.7,-3+.1) -- (2.05+.65+.7,-3.02+.1) -- (2.3+.75+.7,-.98) -- (2.15+.75+.7,-.96) -- cycle;

    \draw[fill] (4.3+.3,-2) circle [radius=0.02];
    \draw[fill] (4.5+.3,-2) circle [radius=0.02];
    \draw[fill] (4.7+.3,-2) circle [radius=0.02];
    \draw[fill] (4.9+.3,-2) circle [radius=0.02];

    \draw[red!50, fill = red!50, opacity=0.3  ] plot((7,-4) -- (7.15,-3.95) -- (6,0) -- (5.85,-.05) -- cycle;
    \draw[red!50, fill = red!50, opacity=0.3  ] plot((10.85-.2,-5-.2) -- (11-.2,-4.95-.15) -- (7.5-.2,.6-.15) -- (7.35-.2,.55-.2) -- cycle;
    \draw[red!50, fill = red!50, opacity=0.3  ] plot((12.35-.2,-4.5-.2) -- (12.5-.2,-4.45-.15) -- (9-.3,1.15) -- (8.85-.3,1.15-.1) -- cycle;

    \draw[blue!50, fill = blue!50, opacity=0.3  ] plot((6.2,-4) -- (6.35,-4.02) -- (6.85,0) -- (6.7,-.02) -- cycle;
    \draw[blue!50, fill = blue!50, opacity=0.3  ] plot((7.15,-4.35) -- (7.3,-4.45) -- (10.85,1.2) -- (10.7,1.3) -- cycle;
    \draw[blue!50, fill = blue!50, opacity=0.3  ] plot((8.55,-5.05) -- (8.7,-5.15) -- (12.3,.6) -- (12.15,.7) -- cycle;

    \node[below, blue] at (6.275, -4.01) {\small$h_m$};
    \node[below, blue] at (7.225, -4.4) {\small$h_{m+1}$};
    \node[below, blue] at (8.625, -5.1) {\small$h_{m+2}$};

    \node[above, red] at (5.925, -.025) {\small$h'_n$};
    \node[above, red] at (7.225, .4) {\small$h'_{n+1}$};
    \node[above, red] at (8.625, 1.1) {\small$h'_{n+2}$};

\draw [decorate,blue, 
    decoration = {brace}] (10.75,1.4) --  (12.35,.7);
    \node [blue, above] at (12.55,1.05) {$L_1-$separated};

\draw [decorate,red, 
    decoration = {brace}] (12.35,-4.7) -- (10.7,-5.3);
    \node [red, below] at (12.525,-5) {$L_2-$separated};

    \draw[fill] (9.4,0.5) circle [radius=0.05];
    \draw[fill] (9.4,-4.5) circle [radius=0.05];
    \node[right] at (9.4, 0.5) {\small $y_{n+2}$};
    \node[right] at (9.4, -4.5) {\small $x_{m+2}$};

    \end{tikzpicture}
    \caption{Two $L_1-$separated curtains separate $x_{m+2}$ from $b_2$. Similarly, two $L_2-$separated curtains separate $y_{n+2}$ from $b_1$.   }
    \label{fig:injectivity}
\end{figure}
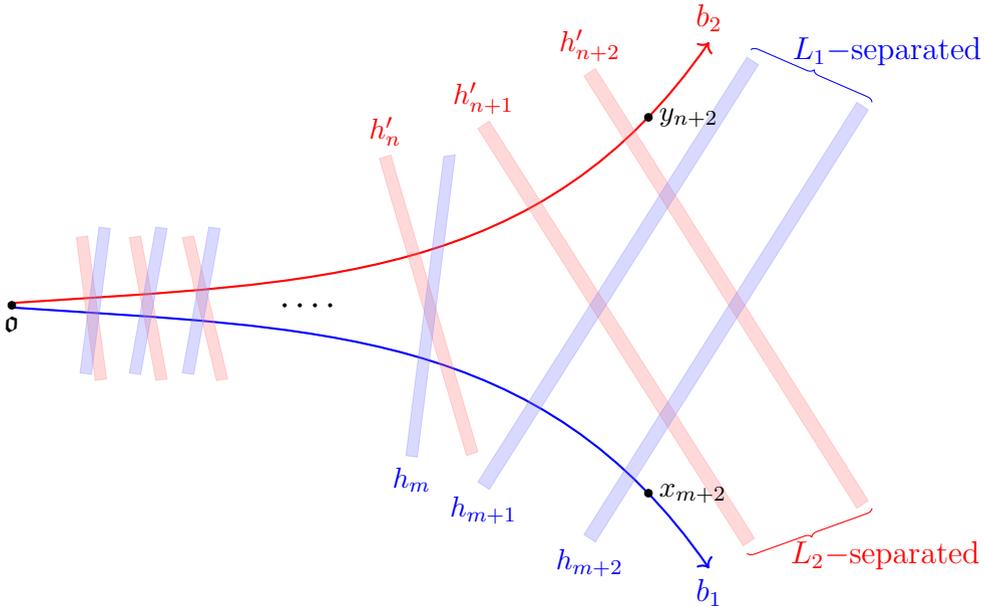

We now investigate $d_{L}\left(x_{i}, y_{j}\right)$ for $i>m+2, j>n+2$. Let $c$ be a maximal $L$-chain that realizes $d_{L}\left(x_{i}, x_{m+2}\right)=1+|c|$. Due to where $h_{m+1}$ and $h_{m+2}$ are placed, at least $|c|-L_1$  curtains in $c$ will not intersect $b_{2}$. Let $c^{\prime}$ be a maximal chain realizing $d_{L}\left(y_{j}, y_{n+2}\right)=1+\left|c^{\prime}\right|$. By a similar argument involving $h^\prime_{n+1}$ and $h^\prime_{n+2}$, we get that at least $|c^\prime| - L_2$ curtains in $c^\prime$ will not intersect $b_1$. By gluing these two chains together via Lemma \ref{gluingLchain}, we obtain an $L$-chain of length at least $|c|+\left|c^{\prime}\right|-L_{1}-L_2-L-2$ that separates $x_{i}$ and $y_{j}$. Thus,

$$
d_{L}\left(x_{i}, y_{j}\right) \geq d_{L}\left(x_{i}, x_{m+2}\right)+d_{L}\left(y_{j}, y_{n+2}\right)-L_{1} - L_2 -L-4.
$$

Hence,

\begin{align*}
 \left(x_{i} \cdot y_{j}\right)_{\ob}&=\frac{1}{2}\left(\hat{d}\left(\ob, x_{i}\right)+\hat{d}\left(\ob, y_{j}\right)-\hat{d}\left(x_{i}, y_{j}\right)\right) \\
& =\frac{1}{2}\left(\hat{d}\left(\ob, x_{i}\right)+\hat{d}\left(\ob, y_{j}\right)-\sum_{L=1}^{\infty} \frac{d_{L}\left(x_{i}, y_{j}\right)}{L^{3}}\right) \\
& \leq \frac{1}{2}\left(\hat{d}\left(\ob, x_{i}\right)+\hat{d}\left(\ob, y_{j}\right)-\sum_{L=1}^{\infty} \frac{d_{L}\left(x_{i}, x_{m+2}\right)+d_{2}\left(y_{i}, y_{n+2}\right)-L_{1} - L_2-L-4}{L^{3}}\right) \\
& =\frac{1}{2}\left(\hat{d}\left(\ob, x_{i}\right)+\hat{d}\left(\ob, y_{j}\right)-\hat{d}\left(x_{i}, x_{m+2}\right)-\hat{d}\left(y_{i}, y_{n+2}\right )+\sum_{L=1}^{\infty} \frac{L_{1}+L_2+L+4}{L^{3}}\right)  \\
& \leq \frac{1}{2}\left(\hat{d}\left(\ob, x_{m+2}\right)+\hat{d}\left(\ob, y_{n+2}\right)+\sum_{L=1}^{\infty} \frac{L_{1}+L_2+L +4}{L^{3}}\right).
\end{align*}

Where the last inequality is due to the triangle inequality. Thus, for any $i \geq m+2$ and $j \geq n+2$, we have that $\left(x_{i}\cdot y_{j}\right)_{\ob}$ is bounded by the above constant. This gives $$\liminf_{i, j \rightarrow \infty}\left(x_{i} \cdot y_{j}\right)_{\ob}<\infty,$$ so $\left\{x_{i}\right\}$ and $\left\{y_{j}\right\}$ represent different equivalence classes in $\partial \widehat{X}$. This implies the same for $\varphi(b_{1}^\infty)$ and $\varphi(b_{2}^\infty)$.

\end{proof}

\noindent 4.3 \textbf{ Continuity of $ \varphi: \p_\K X  \longrightarrow \p \widehat{X}$} \vspace{.25cm}

We now review the topologies of both $\p_\K X$ and $\p \widehat{X}$ in preparation for showing continuity of $ \varphi$. Recall that we defined the topology of $\p \widehat{X}$ by the basic open sets $$U(s,r) = \left\{\left[\left\{y_i\right\}\right]\bigg| \liminf_{i, j \rightarrow \infty}\left(x_{i} \cdot y_{j}\right)_{\ob} \geq r  \text{ for some }\{x_i\} \in s\right\}$$ where $s \in \p \widehat{X}$ and $r > 0$. Showing continuity will involve using intersections of cone topology bases (Definition \ref{Cone }) and Curtain topology sets (Definition \ref{curtain top}).

\begin{definition}[Visual boundary, cone topology as a subspace topology] \label{Cone }The \textit{visual boundary} of a proper CAT(0) space X, denoted $\p X$ as a set, is the set of all geodesic rays emanating from $\ob$. The \textit{cone topology} of $\p X$ is generated by the basic open sets $$
 V_{R, \epsilon}(\xi):=\left\{\eta \in \partial X \mid d\left(\xi(R), \eta(R)\right)<\epsilon\right\}.
 $$
 When $\p X$ is equipped with the cone topology, we denote it $\p_\infty X$. For convenience, we denote such sets in the subspace $\p_\K X$ as $$
 U_{R, \epsilon}(\xi):=V_{R, \epsilon}(\xi) \cap \partial_\kappa X .
 $$ This defines a subspace topology on $\p_\K X.$
\end{definition}

The following topology on $\p_\K X$ is a useful topology to be able leverage curtain machinery in the same way hyperplanes were leveraged in the analogous topology defined in \cite{IMZ21}.

\begin{definition}[Curtain Topology Sets] \label{curtain top} Let $b^\infty \in\partial_\kappa X$ be a geodesic ray emanating from $\ob$. We say a curtain $h$ \textit{separates} $\ob$ from $a^\infty$ if there exists a $T>0$ such that $h$ separates $\ob$ from $a_{[T,\infty]}$. For each curtain $h$ dual to $b$, we define
$$
U_h\left(b^\infty\right)=\left\{a^\infty \in \partial X: h \text { separates } \mathfrak{o}=a(0) \text { from } a^\infty\right\} \cap \partial_\kappa X.
$$

We define the \textit{curtain topology on $\p_\K X$} as follows: a set $O \subset \p_\K X$ is open if for each $b^\infty \in O$, there exists a curtain $h$ dual to $b$ such that $U_h(b^\infty) \subset O$. This yields a topology on $\p _\K X$. Note, such a topology is actually the subspace topology of the curtain topology on all of $\p X$. However, for convenience of notation, we define $U_h(b^\infty)$ to be intersecting with $\p_\K X$ as above since we are only interested in $\p_\K X$ for this paper.
\end{definition}

\begin{lemma} \label{hyp open} The cone topology and the curtain topology agree on $\p_\K X$.

\end{lemma}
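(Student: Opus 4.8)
The plan is to show the two topologies coincide by proving mutual refinement: each basic curtain-topology set $U_h(b^\infty)$ contains a cone-topology set $U_{R,\epsilon}(b)$, and conversely each cone set $U_{R,\epsilon}(b)$ contains some curtain set $U_h(b^\infty)$. Since both collections form bases (or sub-bases) for topologies on $\partial_\kappa X$, these two containments give equality of the topologies.

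For the first containment, fix $b^\infty \in \partial_\kappa X$ and a curtain $h = h_{b,r}$ dual to $b$, so $h$ separates $\mathfrak{o} = b(0)$ from $b(t)$ for all $t > r + \tfrac12$. I would choose $R$ slightly larger than $r$, say $R = r + 1$, so that $b(R) \in h^+$, and pick $\epsilon$ small enough that the ball of radius $\epsilon$ around $b(R)$ lies entirely in the open halfspace $h^+$ (possible since halfspaces are open and $b(R) \notin h$). Then for any $a^\infty \in U_{R,\epsilon}(b)$ we have $a(R) \in h^+$ while $a(0) = \mathfrak{o} \in h^-$; I would then argue that $a(t)$ stays in $h^+$ for all $t \ge R$, because a geodesic cannot cross a curtain twice (here one invokes the non-backtracking behavior of geodesics across curtains, e.g.\ Lemma \ref{nobigons} / Lemma \ref{crossing} together with convexity of the distance to the pole), so $h$ separates $\mathfrak{o}$ from $a_{[R,\infty)}$ and hence $a^\infty \in U_h(b^\infty)$. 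This shows $U_{R,\epsilon}(b) \subseteq U_h(b^\infty)$, so the curtain topology is coarser than (or equal to a refinement of) the cone topology — i.e.\ every curtain-open set is cone-open.

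For the reverse containment, fix $b^\infty$ and a cone-basic set $U_{R,\epsilon}(b)$. I would use properness of $X$ and the structure of geodesics: choose a curtain $h = h_{b,r}$ dual to $b$ with $r$ large (depending on $R$ and $\epsilon$) so that any geodesic ray $a$ from $\mathfrak{o}$ that is separated from $\mathfrak{o}$ by $h$ must have $a(R)$ close to $b(R)$. The mechanism is that if $h$ separates $\mathfrak{o}$ from $a_{[T,\infty)}$, then $a$ crosses $h$, so by Lemma \ref{crossing} there is a point $p \in a$ with $\pi_b(a(p)) = b(r)$; then $d(a(p), b(r)) \le \tfrac12$ (a point on $a$ projecting to $b(r)$ lies within $\tfrac12$ of the pole, since it lies in $h$... more carefully, $d(a(p), b) = d(a(p), b(r))$ and one controls this via the chain-length/distance estimate of Lemma \ref{Lmetric}'s setup or directly), and since geodesics from a common basepoint that are close at a far parameter are uniformly close at all earlier parameters (CAT(0) convexity of the distance between geodesics, \cite[II.2]{BH99}), taking $r$ large forces $d(a(R), b(R)) < \epsilon$. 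Hence $U_h(b^\infty) \subseteq U_{R,\epsilon}(b)$.

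The main obstacle I anticipate is making the reverse containment quantitative: one needs that \emph{crossing a curtain dual to $b$ far out} genuinely pins down a ray's location near $\mathfrak{o}$, uniformly. This requires combining (i) the fact that crossing $h_{b,r}$ puts a point of $a$ within bounded distance of $b(r)$ via Lemma \ref{crossing}, with (ii) the CAT(0) convexity estimate that two geodesic rays from $\mathfrak{o}$ which pass within a bounded distance of each other at parameter $\approx r$ are within $\epsilon$ of each other at parameter $R$, provided $r \gg R$. Both ingredients are standard in the CAT(0) setting, but assembling the constants cleanly — and checking that the candidate $a^\infty$ is still $\kappa$-Morse so that it actually lies in $\partial_\kappa X$ (which is automatic since $U_h$ is defined to intersect $\partial_\kappa X$) — is where the care is needed. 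The forward containment, by contrast, is essentially a one-line consequence of halfspaces being open and geodesics not backtracking across curtains.
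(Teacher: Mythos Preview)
Your reverse containment has a genuine gap. Crossing a single curtain $h_{b,r}$ gives \emph{no} bound on $d(a(p),b(r))$: membership in $h$ only says $\pi_b(a(p))$ lies in the pole, not that $a(p)$ is close to the pole, so the assertion ``$d(a(p),b(r))\le \tfrac12$'' is false in general (think of two rays from $\ob$ in the Euclidean plane at a fixed angle --- they cross every curtain dual to each other while diverging linearly). Nothing you cite --- Lemma~\ref{crossing}, Lemma~\ref{Lmetric}, convexity --- produces such a bound, and your argument never uses that $b$ is $\kappa$-contracting, which is exactly the hypothesis that makes the lemma nontrivial.

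The paper's proof of this direction fixes this by using the $\kappa$-chain $\{h_i\}$ dual to $b$ (which exists because $b$ is $\kappa$-contracting, via Proposition~\ref{ContractingToExcursion}). One takes $h=h_{m+1}$ for $m$ large; then any $a$ from $\ob$ crossing $h_{m+1}$ must also cross $h_{m-1},h_m$, and Lemma~\ref{closeness} gives $d(a(s),b(t_m))\le D\kappa(t_m)$ for some $s$. Since $|s-t_m|\le D\kappa(t_m)$ as well, one gets $d(a(t_m),b(t_m))\le 2D\kappa(t_m)$, and sublinearity of $\kappa$ lets one choose $m$ so that $2D\kappa(t_m)\le \tfrac{t_m}{R}\epsilon$; CAT(0) convexity then forces $d(a(R),b(R))<\epsilon$. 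The missing idea in your sketch is precisely this use of three consecutive curtains in the $\kappa$-chain to pin down the distance via Lemma~\ref{closeness}.

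For the forward containment, the paper simply cites \cite[Theorem~8.8]{PSZ22}. Your sketch is in the right spirit, but Lemma~\ref{nobigons} is about curtains dual to a segment joining $b$ to an external point, not curtains dual to $b$ itself, so it does not directly yield the ``no backtracking'' claim you want; you would need a separate argument (or the cited reference) for why $a(R)\in h^+$ forces $a_{[T,\infty)}\subset h^+$ eventually.
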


\begin{proof}

Showing the curtain topology is coarser than the cone topology is given in the proof of Theorem 8.8 in \cite{PSZ22}. The reverse direction is a recreation of the proof of Theorem 4.2 in \cite{IMZ21}, but in the curtain setting.

Consider some $U_{R, \epsilon}(b^\infty ) $ for some $\K$-contracting geodesic $b$ and $R, \epsilon >0$. Then there is an infinite $\K$-chain $\{h_i\}$ dual to $b$. Denote the center of poles of each $h_i$ as $b(t_i)$. Put $D$ as the constant in Lemma \ref{closeness} (note that $D$ only depends on the excursion constant of $b$). Fix $m$ large enough so that $t_m > R$ and $2D\K(t_m) \leq \frac{t_m}{R}\epsilon$. We claim $U_{h_{m+1}}(b^\infty) \subset U_{R,\epsilon}(b^\infty).$ Indeed,  let $a^\infty \in U_{h_{m+1}}(b^\infty) $, so $a$ is a geodesic emanating from $\ob$ that crosses $h_{m+1}$. By Lemma \ref{closeness}, there exists an $s >0$ such that $d(a(s),b(t_m)) \leq D\K(t_m)$. Note that, since $a$ and $b$ are both geodesics emanating from $\ob$, we get that $|s-t_m| \leq D\K(t_m) $ Thus,
$$  d(a(t_m),b(t_m)) \leq d(a(t_m),a(s)) + d(a(s),b(t_m)) \leq 2D\K(t_m) \leq  \frac{t_m}{R}\epsilon  $$


Due to convexity of the CAT(0) metric (as shown in \cite[II.2.1]{BH99} and its proof), $d(a(R), b(R)) \leq \epsilon$. Thus, $a^\infty \in  U_{R, \epsilon}(b^\infty ).$

\end{proof}
\begin{theorem}\label{Continuous}
The map $\varphi: \partial_{\K}X \longrightarrow \p \widehat{X}$ is a well-defined, injective, and  continuous map when $\partial_{\K}X$ is endowed with subspace topology of the cone topology.
\end{theorem}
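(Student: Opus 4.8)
The plan is to dispatch the three assertions separately, most of the work having been done already. For \emph{well-definedness}, note that $\varphi$ is defined via the unique geodesic $b$ based at $\ob$ representing $b^\infty$, so the only issue is that $[\{b(n)\}]$ is a genuine point of $\p\widehat{X}$. Lemma \ref{unbnd} gives $\hd(\ob,b(n))\to\infty$, and feeding this into the coarse reverse triangle inequality of Lemma \ref{unparametized quasi geos} yields $(b(m)\cdot b(n))_{\ob}\ge \hd(\ob,b(\min(m,n)))-C_0/2\to\infty$, so $\{b(n)\}$ Gromov--converges to infinity; the identical estimate applied to two increasing unbounded sequences on $b$ shows they are equivalent, so the choice of sequence representing $\varphi(b^\infty)$ is immaterial and any convenient one may be used. \emph{Injectivity} is precisely Proposition \ref{injective}.

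The content is \emph{continuity}. By Lemma \ref{hyp open} it suffices to prove $\varphi$ continuous with $\p_\K X$ carrying the curtain topology, i.e. to produce, for each $b^\infty$ and each basic open set $U(\varphi(b^\infty),r)\subseteq\p\widehat{X}$, a curtain $h$ dual to $b$ with $\varphi\big(U_h(b^\infty)\big)\subseteq U(\varphi(b^\infty),r)$. Fix a $\K$-chain $\{h_i\}$ dual to $b$ (Proposition \ref{ContractingToExcursion}), with poles centred at $b(t_i)$, excursion constant $C$, normalised so that $\ob=b(0)\in h_i^-$ for all $i$; I claim $h=h_N$ works once $N$ is large. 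Given $a^\infty\in U_{h_N}(b^\infty)$, the geodesic $a$ crosses $h_N$ and hence, by the linear order of the chain, crosses $h_1,\dots,h_N$. Applying Lemma \ref{closeness} at an index $M=N-O(1)$ --- whose constant $D$ depends only on $C$, hence is uniform over all such $a$ --- gives $s>0$ with $\pi_b(a(s))=b(t_M)$ and $d(a(s),b(t_M))\le D\K(t_M)$, so $\hd(b(t_M),a(s))\le\bigl(1+D\K(t_M)\bigr)\zeta(3)$ since $d_L\le 1+d$ for every $L$.

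Bridging from $b(u)$ to $a(v)$ through $b(t_M)$ and $a(s)$ and applying the reverse triangle inequality (Lemma \ref{unparametized quasi geos}) to the ordered triples $\ob,b(t_M),b(u)$ on $b$ and $\ob,a(s),a(v)$ on $a$ gives, for all large $u,v$,
\[
\big(b(u)\cdot a(v)\big)_{\ob}\;\ge\;\hd\big(\ob,b(t_M)\big)-\hd\big(b(t_M),a(s)\big)-C_0\;\ge\;\hd\big(\ob,b(t_M)\big)-\bigl(1+D\K(t_M)\bigr)\zeta(3)-C_0 .
\]
It remains to see the first term outgrows the middle one as $N\to\infty$. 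As in the proof of Lemma \ref{unbnd}, the chain $h_1,\dots,h_{M}$ has all consecutive pairs $C\K(t_M)$-separated, hence for every $L\ge L^\ast:=\lceil C\K(t_M)\rceil$ it is an $L$-chain separating $\ob$ from $b(t_M)$, so $d_L(\ob,b(t_M))\ge M-O(1)\ge (t_M-t_1)/\!\bigl(C\K(t_M)\bigr)$; summing $L^{-3}$ over $L\ge L^\ast$ then gives $\hd(\ob,b(t_M))\gtrsim t_M/\K(t_M)^3$, and since $\K^4$ is sublinear this quantity divided by $\K(t_M)$ behaves like $t_M/\K(t_M)^4\to\infty$. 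Hence the right-hand side of the display tends to $\infty$ with $N$, uniformly in $a^\infty\in U_{h_N}(b^\infty)$; picking $N$ so that it exceeds $r$ gives $\liminf_{u,v\to\infty}(b(u)\cdot a(v))_{\ob}\ge r$, i.e. $\varphi(a^\infty)\in U(\varphi(b^\infty),r)$.

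The heart of the argument --- and its only real obstacle --- is this last estimate: one must know a $\K$-contracting ray escapes to infinity in $\widehat{X}$ strictly faster than the scale $\K$ on which nearby geodesics fellow-travel it, since that scale is exactly the width of the bridge $\hd(b(t_M),a(s))$. A single $d_L$ term (as literally extracted in Lemma \ref{unbnd}) only yields growth like $t/\K^4$, which is merely \emph{comparable} to $\K$ unless $\K^5$ is sublinear; summing the contributions of all $L\gtrsim\K$ upgrades this to $t/\K^3\gg\K$ exactly under the hypothesis that $\K^4$ is sublinear. This is the precise point where the standing assumption on $\K$ enters (cf. Example \ref{example} and Remark \ref{d hat is faster than k}) and the reason $\varphi$ must target the full curtain model rather than any single $X_L$.
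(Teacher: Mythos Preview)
Your proof is correct, but it takes a genuinely different route from the paper's.

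The paper exploits the \emph{cone} topology in a more essential way: given $r$, it first passes to a cone neighbourhood $U_{R,\epsilon}(b^\infty)$ and then, via Lemma~\ref{hyp open}, finds a curtain $k$ with $U_k(b^\infty)\subseteq U_{R,\epsilon}(b^\infty)\cap U_{h_{i_{r'}}}(b^\infty)$. The point is that any $a^\infty\in U_k(b^\infty)$ has its geodesic $a$ within CAT(0) distance $\epsilon$ of $b$ at the relevant scale, so the bridge $\hd(x_n,y_n)$ is bounded by a \emph{fixed} constant $(\epsilon+1)\zeta(3)$ rather than by something of order $\K(t_M)$. Consequently the paper only needs $\hd(\ob,b(t_{i_{r'}}))\to\infty$, which is exactly Lemma~\ref{unbnd} as stated.

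You instead stay entirely within the curtain topology and use Lemma~\ref{closeness} to get $a$ within $D\K(t_M)$ of $b$; your bridge therefore has width growing like $\K(t_M)$, and you must show $\hd(\ob,b(t_M))$ outgrows $\K(t_M)$. You obtain this by summing the contributions of \emph{all} $L\ge\lceil C\K(t_M)\rceil$ (rather than a single $L$ as in Lemma~\ref{unbnd}) to extract $\hd(\ob,b(t_M))\gtrsim t_M/\K(t_M)^3$, which dominates $\K(t_M)$ precisely under the hypothesis that $\K^4$ is sublinear. This sharper estimate is essentially the forward direction of Theorem~\hyperlink{TheoremD}{D} (cf.\ Remark~\ref{d hat is faster than k}), so you are in effect proving a piece of the persistent-shadow characterization en route. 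Your approach is more intrinsic to the curtain machinery and makes the role of the $\K^4$ hypothesis very transparent; the paper's approach is shorter and avoids the refined summation by letting the cone topology absorb the bridge width into a fixed $\epsilon$.
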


\begin{proof} Proposition \ref{injective} shows that $\varphi$ is injective. What is left is to show continuity. Let $U(s, r)$ be an open set in $\p \widehat{X}$. If its preimage is nonempty, we can assume that there exists a $b^\infty \in \partial_{\K} X$ such that $ \varphi(b^\infty)=s$ and it suffices to show that there exists a $U_h(b^\infty)  \subset \varphi^{-1}(U(s, r)) $ for some curtain $h$ dual to $b$.

Let $r^{\prime}>0$ and $\epsilon>0$. Since $b$ is  a $ \K$-contracting geodesic, denote $\left\{h_{i}\right\}$ the $\K$-chain dual to $b$ with $b\left(t_{i}\right)$ the center of the pole of each $h_i$. Since $b$ is unbounded in $\widehat{X}$, there exists an $i=i_{ r^{\prime}}$ such that $\hat{d}\left(\ob, b\left(t_{i_{r^\prime}}\right)\right)>r^\prime$. Consider the set $U_{h_{i_{ r^\prime}}}(b^\infty) $ and the open set $U_{R, \epsilon}(b^\infty)$ where $R>t_{i_{r^\prime}}$. Since $U_{R, \epsilon}(b^\infty)$ is open, there exists a curtain $k$ dual to $b$ such that $U_k(b^\infty) \subseteq U_{R, \epsilon}(b^\infty)$ by Lemma \ref{hyp open} . Without loss of generality, the pole of the curtain $k$ is farther distance away from $\ob$ in the CAT(0) metric than the pole of $h_{t_{i_{r^\prime}}}$, so $U_k(b^\infty) \subset U_{h_{t_{i_{r^\prime}}}}(b^\infty)$.  We claim

$$
\begin{aligned}
&U_k(b^\infty) \subseteq \varphi^{-1}(U(s, r)). 
\end{aligned}
$$

Indeed, let $a^\infty \in U_k(b^\infty)$. So $a$ is the geodesic in $a^\infty$ emanating from $\ob$. Then, $a$ crosses the first $i_{r^\prime}$  curtains of $\left\{h_{i}\right\}$ and also $d(a(t), b)<\epsilon$ for all $t \leq R$. See Figure \ref{fig:continuity}. Now, consider any unbounded and increasing sequence $\left\{x_{i}\right\} \subseteq b$ and $\left\{y_{j}\right\} \subseteq a$ with the added condition that, for some $n$, $x_{n}=b\left(t_{i_{r^\prime}}{ }+\frac{1}{2}\right)$ and $y_{n}$ such that $d\left(x_{n}, y_{n}\right)<\epsilon$. What's left to show is that $\left(x_{m} \cdot y_{m^{\prime}}\right)_{\ob} \geq r$ for all $m, m^{\prime}>n$. By Lemma \ref{unparametized quasi geos}, we have there exists a constant $C$ such that, 

\begin{figure}
    \centering
    \begin{tikzpicture}[scale=1]    
    \draw [thick, ->](1,-2) .. controls (5,-2) and (8,-2) .. (11,-2);
    \node [below] at (11,-2) {$b$};
    \draw [blue,thick, ->](1,-1.97) .. controls (5,-1.8) and (8,-1.7) .. (8.5,1.5);
    \node [above, blue] at (8.5,1.5) {$a$};
    \draw[fill] (1,-2) circle [radius=0.05];
    \node[below] at (1,-2) {$\ob$};


    \fill[red, opacity=0.2] (1.9,-3) .. controls(2.1,-3)  and (2,-.5)   .. (1.9,-.5) -- (2.4,-.5).. controls (2.3,-.5)  and (2.2,-3) .. (2.4,-3) -- cycle;

    \fill[red, opacity=0.2] (1.9+.8,-3) .. controls(2.1+.8,-3)  and (2+.8,-.5)   .. (1.9+.8,-.5) -- (2.4+.8,-.5).. controls (2.3+.8,-.5)  and (2.2+.8,-3) .. (2.4+.8,-3) -- cycle;

    \fill[red, opacity=0.2] (1.9+1.8,-3) .. controls(2.1+1.8,-3)  and (2+1.8,-.5)   .. (1.9+1.8,-.5) -- (2.4+1.8,-.5).. controls (2.3+1.8,-.5)  and (2.2+1.8,-3) .. (2.4+1.8,-3) -- cycle;

    \fill[red, opacity=0.2] (1.9+5,-3) .. controls(2.1+5,-3)  and (2+5,-.5)   .. (1.9+5,.5) -- (2.4+5,.5).. controls (2.3+5,-.5)  and (2.2+5,-3) .. (2.4+5,-3) -- cycle;

    \fill[red, opacity=0.2] (1.9+7.5,-3) .. controls(2.1+7.5,-3)  and (2+7.5,-.5)   .. (1.9+7.5,2) -- (2.4+7.5,2).. controls (2.3+7.5,-.5)  and (2.2+7.5,-3) .. (2.4+7.5,-3) -- cycle;






    \draw[fill] (4.3+.8,-2+.15) circle [radius=0.02];
    \draw[fill] (4.5+.8,-2+.15) circle [radius=0.02];
    \draw[fill] (4.7+.8,-2+.15) circle [radius=0.02];
    \draw[fill] (4.9+.8,-2+.15) circle [radius=0.02];

    \draw[fill] (7.3, -2) circle [radius=0.05];
    \draw[fill] (7.3, -.73) circle [radius=0.05];
    \node[below] at (7.3, -2) {$x_n$};
    \node[above] at (7.3, -.73) {$y_n$};
    \draw[dotted] (7.3,-2) to (7.3,-.73);

    \draw[fill] (10.4, -2) circle [radius=0.05];
    \draw[fill] (8.08, .2) circle [radius=0.05];
    \node[above] at (7.9, .2) {$y_m$};
    \node[below] at (10.4, -2) {$x_m$};

    \node[below, red] at (7.15, -3) {$h_{i_{r'}}$};

    \node[right] at (7.3, -1.315) {\small $<\epsilon$};

    \end{tikzpicture}
    \caption{Picture of $b$ crossing $h_{i_{r'}}$ as well as where $x_n$ and $y_n$ are placed.}
    \label{fig:continuity}
\end{figure}
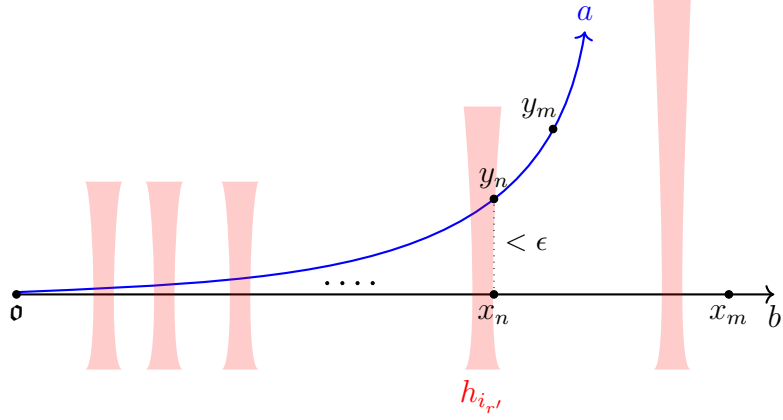

$$ \hat{d}\left(\ob, x_{m}\right) \geq \hat{d}\left(\ob, x_{n}\right)+\hat{d}\left(x_{n}, x_{m}\right)-C $$

 and 

$$
 \hat{d}\left(\ob, y_{m'} \right) \geq \hat{d}\left(\ob, y_{n}\right)+\hat{d}\left(y_{n}, y_{m'}\right)-C.
$$
Also, by the triangle inequality,
$$
\hat{d}\left(x_{m}, y_{m^{\prime}}\right) \leq \hat{d}\left(x_{m}, x_{n}\right)+\hat{d}\left(x_{n}, y_{n}\right)+\hat{d}\left(y_{n}, y_{m^{\prime}}\right).
$$

Thus, by the above inequalities, we get

$$
\begin{aligned}
\left(x_{m} \cdot y_{m^{\prime}}\right)_{\ob} &=\frac{1}{2}\left(\hat{d}\left(\ob, x_{m}\right)+\hat{d}\left(\ob, y_{m^{\prime}}\right)-\hat{d}\left(x_{m}, y_{m'}\right)\right)\\
& \geq \frac{1}{2}\left(\hat{d}\left(\ob, x_{n}\right)+\hat{d}\left(\ob, y_{n}\right)-\hat{d}\left(x_{n}, y_{n}\right)-2 C\right) \\
\end{aligned}
$$
Since $[\ob, x_n]$ and $[\ob, y_n]$ both cross the curtains $\{h_1, h_2, \cdots, h_{i_{r'}}\}$ we see that $\hat{d}\left(\ob, x_{n}\right) > r'$ and $\hat{d}\left(\ob, y_{n}\right) > r'$. Thus,
$$
\begin{aligned}
  \left(x_{m} \cdot y_{m^{\prime}}\right)_{\ob}  & \geq \frac{1}{2}\left(r^{\prime}+r^{\prime}-\epsilon-2 C\right) \\
&=r^{\prime}-\frac{1}{2} \epsilon- C.
\end{aligned}
$$
Since $r^{\prime}$ and $\epsilon$ were arbitrary, we can fix $\epsilon$ and choose $r^{\prime}$ such that $\left(x_{m} \cdot y_{m'}\right)_{\ob} > r$. Thus, $\varphi(a^\infty) \in U(s, r)$. This is true for any $a^\infty \in U_k(b^\infty)$. Hence, $\varphi$ is continuous. 

\end{proof}

\begin{remark}
Since the underlying sets of $X$ and $\widehat{X}$ are the same,  $\varphi$ is an $\text{Isom}\hspace{.1cm} X$-equivariant map.
\end{remark}

\begin{definition}[Sublinearly Morse boundary, Sublinearly Morse topology]
The $\kappa$-Morse boundary, $ \partial_{\kappa} X$, can be equipped with the \textit{sublinearly Morse topology}: Fix a base point $\mathfrak{o}$, let $ \xi \in \partial_{\kappa} X$, and let $b$ be the unique geodesic representative of $\xi$ that starts at $\ob$. For all $r>0$, we define $U_{\kappa}(b, r)$ to be the set of all points $\eta \in \partial_{\kappa} X$ such that for every $(K,C)-$quasi-geodesic $\beta$ representing $\eta$, starting at $\mathfrak{o}$, and satisfying $m_{b}(K,C) \leq \frac{r}{2 \kappa(r)}$, we have $\left.\beta\right|_{[0, r]} \subset \mathcal{N}_{\kappa}\left(a, m_{a}(\K,C)\right)$ (see \cite{QRT19}).

\end{definition}

The sublinear Morse topology as first defined in \cite{QRT19} was inspired by the Cashen-Mackay topology for the Morse boundary \cite{CM19}. This topology has some utility because it is metrizable and a quasi-isometry invariant unlike the cone topology. With the following lemma, we can import the result of Theorem \ref{Continuous} with respect to the sublinear Morse topology.

\begin{lemma}[Lemma 2.12 in \cite{IMZ21}]\label{cone topology coarser} Let $X$ be a proper CAT(0) space. The cone topology restricted to the set $\p_\K X$ is coarser than the sublinearly Morse topology.
\end{lemma}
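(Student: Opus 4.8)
The plan is to show directly that every basic open set of the cone topology on $\p_\K X$ contains, around any of its points, a basic open set of the sublinearly Morse topology. Fix $b^\infty \in \p_\K X$ with geodesic representative $b$ based at $\ob$, and fix a cone-basic neighborhood $U_{R,\epsilon}(b^\infty) = V_{R,\epsilon}(b) \cap \p_\K X$. I want to produce $r > 0$ so that $U_\K(b, r) \subseteq U_{R,\epsilon}(b^\infty)$. The idea is that membership in $U_\K(b,r)$ forces any quasi-geodesic representative $\beta$ of $\eta$ (with Morse data small enough relative to $r$) to stay inside the sublinear neighborhood $\mathcal N_\K(b, m_b(K,C))$ up to parameter $r$; in particular the \emph{geodesic} representative $a$ of $\eta$ (a $(1,0)$-quasi-geodesic) must satisfy $a|_{[0,r]} \subset \mathcal N_\K(b, m_b(1,0))$ once $r$ is large enough that $m_b(1,0) \le \frac{r}{2\K(r)}$. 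So $d(a(r), b) \le m_b(1,0)\,\K(r)$.

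The key step is then to convert this ``$a$ stays sublinearly close to $b$ out to time $r$'' statement into the cone-topology conclusion ``$d(a(R), b(R)) < \epsilon$''. For this I would use convexity of the CAT(0) metric together with the fact that $a$ and $b$ both emanate from $\ob$: if $a(r)$ is within distance $m_b(1,0)\K(r)$ of some point $b(s)$, then (since $a,b$ are unit-speed geodesics from a common basepoint) $|s - r| \le m_b(1,0)\K(r)$ as well, so $d(a(r), b(r)) \le 2 m_b(1,0)\K(r)$. By convexity of $t \mapsto d(a(t), b(t))$ on $[0,r]$ (it is a convex function vanishing at $0$), we get $d(a(R), b(R)) \le \frac{R}{r} \cdot 2 m_b(1,0)\K(r)$ for $R \le r$. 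Since $\K$ is sublinear, $\frac{\K(r)}{r} \to 0$, so by choosing $r$ large enough we make $\frac{R}{r}\cdot 2 m_b(1,0)\K(r) < \epsilon$ — this is exactly the kind of estimate already used in the proof of Lemma~\ref{hyp open}. Simultaneously enlarging $r$ so that $m_b(1,0) \le \frac{r}{2\K(r)}$ (again possible by sublinearity) guarantees the geodesic $a$ is admitted by the definition of $U_\K(b,r)$, so the constraint applies to it. Hence any $\eta \in U_\K(b,r)$ has its geodesic representative satisfying $d(a(R), b(R)) < \epsilon$, i.e. $\eta \in U_{R,\epsilon}(b^\infty)$.

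The main obstacle is bookkeeping the quantifiers in the definition of $U_\K(b,r)$ correctly: the definition quantifies over \emph{all} quasi-geodesic representatives $\beta$ of $\eta$ whose Morse gauge is small relative to $r$, and one must be sure the unique geodesic representative $a$ is among those — which needs $m_b(1,0) = m_b(K,C)$ with $(K,C)=(1,0)$ to be $\le \frac{r}{2\K(r)}$, true for large $r$ by sublinearity — and then correctly read off that $a|_{[0,r]} \subset \mathcal N_\K(b, m_b(1,0))$ means $d(a(t), b) \le m_b(1,0)\K(a(t)) = m_b(1,0)\K(d(\ob,a(t))) = m_b(1,0)\K(t)$. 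After that, the geometry (common basepoint, convexity, sublinear decay) is routine and mirrors arguments already in the paper. I would also remark that this is why $U_\K(b,r)$ is designed with the $\frac{r}{2\K(r)}$ threshold: it is precisely what lets the cone condition be extracted.
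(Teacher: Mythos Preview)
The paper does not give its own proof of this lemma; it is simply cited as Lemma~2.12 in \cite{IMZ21} and used to deduce Theorem~\ref{sublinearly morse continuous} from Theorem~\ref{Continuous}. So there is no proof in the paper to compare against.

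That said, your argument is correct and is essentially the standard one. The key points --- that the geodesic representative $a$ of $\eta$ qualifies as a $(1,0)$-quasi-geodesic once $r$ is large enough that $m_b(1,0) \le \tfrac{r}{2\K(r)}$, that $d(a(r),b(r)) \le 2m_b(1,0)\K(r)$ follows from the triangle inequality and the common basepoint, and that CAT(0) convexity then gives $d(a(R),b(R)) \le \tfrac{R}{r}\cdot 2m_b(1,0)\K(r) \to 0$ --- are all valid and mirror the convexity step in Lemma~\ref{hyp open}. One small clarification worth making explicit: it suffices to produce, for each $b^\infty$ and each cone-basic neighborhood $U_{R,\epsilon}(b^\infty)$ centered at $b^\infty$, a sublinearly Morse basic set $U_\K(b,r)$ inside it, because the cone-basic sets form a neighborhood basis at each point; you do this, but you might state that observation so the reader sees why checking only at the center is enough.
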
 

\begin{theorem}\label{sublinearly morse continuous}

    The map $\varphi: \partial_{\K}X \longrightarrow \p \widehat{X}$ is a well-defined, injective, and  continuous map when $\partial_{\K}X$ is endowed with the sublinear Morse topology.
\end{theorem}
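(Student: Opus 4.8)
The plan is to deduce Theorem \ref{sublinearly morse continuous} almost immediately from Theorem \ref{Continuous} by a general topology argument. Recall that a map is continuous for a given topology on its domain whenever it is continuous for a \emph{coarser} topology on the domain. Theorem \ref{Continuous} established that $\varphi\colon \partial_\K X \to \partial\widehat X$ is well-defined, injective, and continuous when $\partial_\K X$ carries the subspace cone topology. By Lemma \ref{cone topology coarser}, the cone topology restricted to $\partial_\K X$ is coarser than the sublinearly Morse topology. Hence every open set in the cone topology is also open in the sublinearly Morse topology, and so $\varphi^{-1}(U)$ is sublinearly-Morse-open for every open $U \subseteq \partial\widehat X$.

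Concretely, I would argue as follows. Let $U \subseteq \partial\widehat X$ be open. Since $\varphi$ is continuous with respect to the cone topology (Theorem \ref{Continuous}), the preimage $\varphi^{-1}(U)$ is open in $\partial_\K X$ with the subspace cone topology. By Lemma \ref{cone topology coarser}, any set that is open in the cone topology on $\partial_\K X$ is open in the sublinearly Morse topology on $\partial_\K X$; therefore $\varphi^{-1}(U)$ is open in the sublinearly Morse topology. As $U$ was an arbitrary open set, $\varphi$ is continuous when $\partial_\K X$ is endowed with the sublinearly Morse topology. Well-definedness and injectivity are properties of $\varphi$ as a set map and do not depend on the topology on the domain, so they are inherited verbatim from Theorem \ref{Continuous} (equivalently, from Proposition \ref{injective} together with Lemma \ref{unbnd}, which guarantees $\varphi$ lands in $\partial\widehat X$).

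There is essentially no obstacle here: the substance of the theorem was already carried out in Theorem \ref{Continuous}, and the only new input is the topology comparison of Lemma \ref{cone topology coarser} (imported from \cite{IMZ21}). The one point worth a sentence of care is making sure that the relevant notion is ``finer implies continuity is preserved'': refining the topology on the domain can only add open sets, and thus can only make more preimages open, so continuity is never destroyed. I would state this explicitly so the reader is not left to wonder whether the implication goes the other way. This completes the proof.

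\begin{proof}
Let $U \subseteq \partial\widehat X$ be open. By Theorem \ref{Continuous}, $\varphi$ is well-defined, injective, and continuous when $\partial_\K X$ carries the subspace cone topology; in particular $\varphi^{-1}(U)$ is open in $\partial_\K X$ with respect to the cone topology. By Lemma \ref{cone topology coarser}, the cone topology restricted to $\partial_\K X$ is coarser than the sublinearly Morse topology, so every cone-open subset of $\partial_\K X$ is also open in the sublinearly Morse topology. Hence $\varphi^{-1}(U)$ is open in the sublinearly Morse topology. Since $U$ was arbitrary, $\varphi$ is continuous with respect to the sublinearly Morse topology. Well-definedness and injectivity are statements about $\varphi$ as a map of sets and are unaffected by the choice of topology on the domain, so they follow from Theorem \ref{Continuous}.
\end{proof}
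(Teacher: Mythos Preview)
Your proof is correct and matches the paper's approach exactly: the paper states this theorem immediately after Lemma \ref{cone topology coarser} without a separate proof, implicitly relying on the same ``coarser topology on the domain preserves continuity'' argument applied to Theorem \ref{Continuous}. You have simply made that one-line deduction explicit.
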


Recent work of \cite{AM22} has shown new proving techniques when regarding cobounded projections to hyperbolic spaces. With Definition \ref{QuasiRuler} and Lemma \ref{A.11}, we can follow \cite{AM22}[Lemma 3.32] to upgrade $\varphi$ in Theorem \ref{Continuous} to a homeomorphism.

\begin{definition} \label{QuasiRuler} Let $X$ be a metric space, $C \geq 0$, and $I \subset \mathbb{R}$ a (possibly unbounded) closed interval. A path $\gamma: I \rightarrow X$ is an \textit{unparameterized C-quasi-ruler} if it satisfies the following conditions. \begin{itemize}
    \item $\forall t<s<r$, we have $d(\gamma(t), \gamma(s))+d(\gamma(s), \gamma(r)) \leq d(\gamma(t), \gamma(r))+C$.
    \item $\forall t_0 \in I$, we have $\limsup _{\left|t-t_0\right| \rightarrow 0} d\left(\gamma(t), \gamma\left(t_0\right)\right)<C$.
\end{itemize}

We see that, for any $\K$-contracting ray $b:[0,\infty) \longrightarrow X$, and any $t_0 \in [0, \infty)$, we have $\lim \sup_{|t-t_0|\to 0} \hat{d}(b(t), b(t_0)) \leq \sum \frac{1}{L^3}$. This, along with Lemma \ref{unparametized quasi geos} show that $b$ is a $C$-quasi-ruler for the constant $C$ in Lemma \ref{unparametized quasi geos}.

\begin{lemma}[Lemma A.11 in \cite{AM22}] \label{A.11}  Let $\widehat{X}$ be a $\delta$-hyperbolic space with basepoint $\ob \in X$, and let $\gamma, \gamma^{\prime}:[0, \infty) \rightarrow \widehat X$ be two $C$-quasi-rulers with unbounded image that start at $\ob$ and define points $[\{\gamma(n)\}], [\{\gamma'(m)\}]$ in $\p\widehat{X}$. If $x^{\prime} \in \gamma$ and $y^{\prime} \in \gamma^{\prime}$ are such that $d\left(\ob, x^{\prime}\right), d\left(\ob, y^{\prime}\right) \leq \liminf_{m, n \rightarrow \infty}\left(\gamma(n) \cdot \gamma'(m)\right)_{\ob}+C$, then
$$
\left(x^{\prime} \cdot y^{\prime}\right)_\ob \geq \min \left\{d\left(\ob, x^{\prime}\right), d\left(\ob, y^{\prime}\right)\right\}-C-2 \delta
$$
and
$$
d\left(x^{\prime}, y^{\prime}\right) \leq C+2 \delta+\left|d\left(\ob, x^{\prime}\right)-d\left(\ob, y^{\prime}\right)\right|
$$
\end{lemma}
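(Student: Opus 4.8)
Since the statement concerns only the $\delta$-hyperbolic space $\widehat X$ and the notion of a $C$-quasi-ruler, the plan is to reduce both inequalities to the stability of unparameterized quasi-geodesics in hyperbolic spaces together with the standard dictionary between the Gromov product and a ``divergence radius''. As the lemma is quoted from \cite{AM22}, one option is simply to cite it; but the argument I would give runs as follows.

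First I would record that a $C$-quasi-ruler with unbounded image, reparameterized by arc length, is an unparameterized quasi-geodesic with constants depending only on $C$: the coarse additivity $d(\gamma(t),\gamma(s))+d(\gamma(s),\gamma(r))\le d(\gamma(t),\gamma(r))+C$ for $t<s<r$ is precisely this condition in the sense of \cite{MMS12}, while the local boundedness hypothesis forbids pathological jumps. Hence, in the quasi-geodesic $\delta$-hyperbolic space $\widehat X$, the Morse lemma applies, and $\gamma,\gamma'$ each stay within bounded Hausdorff distance $R=R(C,\delta)$ of a quasi-geodesic ray from $\ob$ representing the respective boundary points $[\{\gamma(n)\}]$ and $[\{\gamma'(m)\}]$.

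Next I would set $T:=\liminf_{m,n\to\infty}(\gamma(n)\cdot\gamma'(m))_\ob$ and use $\delta$-thinness of the triangles with vertices $\ob,\gamma(n),\gamma'(m)$ to identify, up to a uniform additive error, $(\gamma(n)\cdot\gamma'(m))_\ob$ with the length of the initial segment along which $\gamma$ and $\gamma'$ stay a bounded distance apart; thus $T$ is, coarsely, the radius out to which the two quasi-rulers fellow-travel from $\ob$. The heart of the lemma is the second displayed inequality. Assuming without loss of generality $d(\ob,x')\le d(\ob,y')$, the hypothesis $d(\ob,x')\le T+C$ places $x'$ inside this fellow-traveling portion, so there is a point $\tilde y\in\gamma'$ with $d(x',\tilde y)\le C+2\delta$ (thin triangles together with the quasi-ruler property). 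Since $\gamma'$ emanates from $\ob$ and is $C$-coarsely additive, and $d(\ob,\tilde y)$ differs from $d(\ob,x')$ by at most $C+2\delta$, one gets $d(\tilde y,y')\le |d(\ob,x')-d(\ob,y')|+O(C+\delta)$; the triangle inequality then yields $d(x',y')\le C+2\delta+|d(\ob,x')-d(\ob,y')|$ after absorbing accumulated constants into $C$ and $\delta$ as in \cite{AM22}.

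Finally, the first displayed inequality is a formal consequence of the second: expanding $(x'\cdot y')_\ob=\tfrac12\big(d(\ob,x')+d(\ob,y')-d(x',y')\big)$ and substituting the bound just obtained gives $(x'\cdot y')_\ob\ge \min\{d(\ob,x'),d(\ob,y')\}-\tfrac12(C+2\delta)$, which is even stronger than claimed. The main obstacle is not conceptual but one of care: because $\widehat X$ is only a quasi-geodesic hyperbolic space, the Morse lemma and the thin-triangle/Gromov-product dictionary must be invoked in their quasi-geodesic forms, with the additive constants tracked precisely enough to land on exactly $C+2\delta$; I would therefore adopt \cite{AM22}'s choice of the four-point (insize) formulation of $\delta$-hyperbolicity and propagate its constants verbatim.
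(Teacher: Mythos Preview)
The paper does not prove this lemma; it is quoted verbatim from \cite{AM22} (as the statement's header indicates) and is only \emph{used} in the proof of Theorem~\ref{homeo}. So there is no proof in the paper to compare your proposal against --- your own first sentence already identifies the correct option, namely to cite \cite{AM22}.

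That said, your sketch is a reasonable outline of how such a lemma is proved: the observation that the first displayed inequality is a formal consequence of the second (via the definition of the Gromov product) is correct and even yields the sharper constant $\tfrac{1}{2}(C+2\delta)$. The one place where your sketch is genuinely loose is the phrase ``after absorbing accumulated constants into $C$ and $\delta$ as in \cite{AM22}'': the statement claims the \emph{exact} constant $C+2\delta$, so an honest proof must track the additive errors carefully rather than appeal to a Morse-lemma constant $R=R(C,\delta)$ and then declare it equal to $C+2\delta$. If you want to include a proof rather than a citation, you would need to work directly with the quasi-ruler inequality and the four-point condition (as you suggest at the end), not pass through an auxiliary quasi-geodesic and a stability constant.
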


\end{definition}

\begin{theorem}\label{homeo}
    When $\p_\K X$ is endowed with the subspace topology of the cone topology of $\p X$, the map $\varphi: \p_\K X \longrightarrow \p \widehat{X}$ is a homeomorphism onto its image.
\end{theorem}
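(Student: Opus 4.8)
The plan is to show that $\varphi$ is a continuous bijection onto its image (already done in Theorem \ref{Continuous}) together with continuity of the inverse, and for the latter the natural strategy is to produce, for every basic cone-topology neighborhood $U_{R,\epsilon}(b^\infty)$ of a point $b^\infty \in \partial_\K X$, a basic Gromov-boundary neighborhood $U(s,r)$ of $s = \varphi(b^\infty)$ whose preimage under $\varphi^{-1}$ lands inside $U_{R,\epsilon}(b^\infty)$. Equivalently: I want to find $r$ so that whenever $a^\infty \in \partial_\K X$ satisfies $\varphi(a^\infty) \in U(s,r)$, the rays $a$ and $b$ are forced to $\epsilon$-fellow-travel up to time $R$ in the CAT(0) metric. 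By Lemma \ref{hyp open} it suffices to instead produce a curtain $h$ dual to $b$ with $U_h(b^\infty) \supseteq \varphi^{-1}(U(s,r)) \cap \partial_\K X$, since the curtain topology and cone topology agree on $\partial_\K X$; this reduces the problem to a combinatorial statement about which curtains $a$ must cross.

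First I would fix the $\K$-chain $\{h_i\}$ dual to $b$ with centers of poles $b(t_i)$ and excursion constant $C$. The key input is Lemma \ref{unbnd}: $b$ is an unbounded $C'$-quasi-ruler in $\widehat X$ (using the quasi-ruler observation recorded just before Lemma \ref{A.11}). Given the target curtain $h_m$ dual to $b$, I would pick a point $x' = b(t_m + \tfrac12) \in b$ far enough out, set $r$ large enough that $\hd(\ob, x') \le r$ (possible since $b$ is unbounded), and then for any $a^\infty$ with $\varphi(a^\infty) \in U(s,r)$ apply Lemma \ref{A.11} with $\gamma = b$ and $\gamma' = a$ (both unbounded $C'$-quasi-rulers by the remark preceding Lemma \ref{A.11}, since $a$ is $\K$-contracting and hence an unparametrized quasi-geodesic in $\widehat X$ by Lemma \ref{unparametized quasi geos}). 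Choosing a point $y' \in a$ with $\hd(\ob, y')$ comparable to $\hd(\ob, x')$, Lemma \ref{A.11} gives $\hd(x', y') \le C' + 2\delta + |\hd(\ob,x') - \hd(\ob,y')|$, which I can make small, so $x'$ and $y'$ are $\hd$-close. Then I convert $\hd$-closeness back to a combinatorial statement: if $x'$ and $y'$ were separated by a long $L$-chain for every $L$ the $\hd$ distance would be large, so $a$ cannot "miss" the curtains $h_1, \dots, h_m$; more precisely $a$ must cross $h_m$, i.e. $a^\infty \in U_{h_m}(b^\infty)$. Pushing $m$ (hence $R$) as large as needed and invoking convexity of the CAT(0) metric exactly as in the proof of Lemma \ref{hyp open} then yields $a^\infty \in U_{R,\epsilon}(b^\infty)$.

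I would organize the write-up by citing \cite{AM22}[Lemma 3.32] as the template: the content of that lemma is precisely that a cobounded hyperbolic projection which is a quasi-ruler on geodesics and is $\K$-injective (the injectivity being Proposition \ref{injective}) induces a homeomorphism from the relevant boundary onto its image. So the proof is essentially: (1) record that $\varphi$ is a continuous injection (Theorem \ref{Continuous}); (2) record that every $\K$-contracting ray is an unbounded $C$-quasi-ruler in $\widehat X$ (Lemmas \ref{unbnd}, \ref{unparametized quasi geos}, and the quasi-ruler remark); (3) apply Lemma \ref{A.11} to two such rays $a, b$ with $\varphi(a^\infty)$ near $\varphi(b^\infty)$ to conclude $a$ and $b$ stay $\hd$-close on an initial segment whose length grows with the Gromov-product parameter $r$; (4) translate that $\hd$-closeness into $a$ crossing a prescribed dual curtain $h_m$, hence $a^\infty \in U_{h_m}(b^\infty)$; (5) conclude via Lemma \ref{hyp open} and CAT(0) convexity that $\varphi^{-1}$ is continuous, so $\varphi$ is a homeomorphism onto its image.

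The main obstacle I anticipate is step (4): converting a bound on $\hd(x', y')$ into the combinatorial conclusion that $a$ crosses the specific curtains $h_1,\dots,h_m$ dual to $b$. The subtlety is that $\hd$-closeness only directly controls the $d_L$ distances $d_L(x',y')$, and a priori $a$ could approach $b$ while crossing a totally different family of curtains; one must use that $x' = b(t_m+\tfrac12)$ lies on $b$ and that the $h_i$ are $\K$-separated (hence $L$-separated for suitable $L$) to argue, via Lemma \ref{closeness} or Lemma \ref{Closeness(Not dual)} and the gluing Lemma \ref{gluingLchain}, that a ray landing $\hd$-close to $x'$ with $\varphi(a^\infty)$ near $s$ genuinely has $b(t_m)$ (or a nearby point) in its closest-point projection, forcing the crossing. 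Getting the quantifiers right — first fix $\epsilon$, then $m$ (equivalently $R$) large via unboundedness, then $r$ large enough that Lemma \ref{A.11} applies with the right points — is where the care is needed, but it parallels both the proof of Lemma \ref{hyp open} and \cite{AM22}[Lemma 3.32] closely enough that no genuinely new idea should be required.

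\begin{proof}
We follow the argument of \cite{AM22}[Lemma 3.32], which applies in our setting once we verify its hypotheses. By Theorem \ref{Continuous}, $\varphi$ is a continuous injection when $\p_\K X$ carries the subspace cone topology, so it suffices to show $\varphi^{-1}: \varphi(\p_\K X) \to \p_\K X$ is continuous; by Lemma \ref{hyp open} we may equivalently use the curtain topology on $\p_\K X$. Fix $b^\infty \in \p_\K X$, let $b$ be its geodesic representative from $\ob$, let $\{h_i\}$ be a $\K$-chain dual to $b$ with centers of poles $b(t_i)$ and excursion constant $C$, and recall from Lemma \ref{unparametized quasi geos} (and the remark preceding Lemma \ref{A.11}) that $b$ is an unbounded $C_0$-quasi-ruler in $\widehat X$, where $C_0 = \sum_{L=1}^\infty(\tfrac{3}{2L^2} + \tfrac{5}{L^3})$; the same holds for the geodesic representative of any other $\K$-contracting ray. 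Let $\delta$ be the hyperbolicity constant of $\widehat X$.

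Given a basic curtain neighborhood $U_{h_m}(b^\infty)$ of $b^\infty$, set $s = \varphi(b^\infty)$ and $x' = b(t_m + \tfrac12)$. Since $b$ is unbounded in $\widehat X$ (Lemma \ref{unbnd}), choose $r$ with $r \geq \hd(\ob, x') + 3C_0 + 4\delta$. Suppose $a^\infty \in \p_\K X$ with $\varphi(a^\infty) \in U(s,r)$, and let $a$ be the geodesic representative of $a^\infty$ from $\ob$; then $a$ is also an unbounded $C_0$-quasi-ruler in $\widehat X$. Pick $y' \in a$ with $|\hd(\ob, y') - \hd(\ob, x')| < 1$, which is possible since $a$ is unbounded and $\hd$-continuous along $a$. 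Both $\hd(\ob, x')$ and $\hd(\ob, y')$ are at most $\liminf_{m,n\to\infty}(a(n)\cdot b(m))_{\ob} + C_0 \geq r$, so Lemma \ref{A.11} applies and yields
\[
\hd(x', y') \leq C_0 + 2\delta + |\hd(\ob, x') - \hd(\ob, y')| \leq C_0 + 2\delta + 1.
\]
Thus $x'$ and $y'$ are within a uniformly bounded $\hd$-distance $C_1 := C_0 + 2\delta + 1$; in particular $d_L(x', y') \leq L^3 C_1$ for every $L$, so no $L$-chain separating $x'$ from $y'$ can be long once $L$ is bounded. Applying Lemma \ref{unparametized quasi geos}-style estimates together with Lemma \ref{closeness} to the geodesic $[x', y']$, we conclude (after enlarging $m$ at the outset so that $\lceil C\K(t_m)\rceil$-chains realize the relevant distances and $\K(t_m)$ is large compared to $C_1$) that $a$ crosses $h_1, \dots, h_m$; hence $a^\infty \in U_{h_m}(b^\infty)$.

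Therefore $\varphi^{-1}(U(s,r)) \cap \varphi(\p_\K X) \subseteq \varphi(U_{h_m}(b^\infty))$ inside $\varphi(\p_\K X)$, which shows $\varphi^{-1}$ is continuous with respect to the curtain topology on $\p_\K X$. By Lemma \ref{hyp open} the curtain topology and the subspace cone topology agree on $\p_\K X$, so $\varphi$ is a homeomorphism onto its image.
\end{proof}
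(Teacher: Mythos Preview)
Your overall strategy matches the paper's (and the template from \cite{AM22}): use Lemma \ref{A.11} to get $\hat d$-closeness between a point on $b$ and a point on $a$, then convert this into a curtain-crossing statement. However, your execution of the conversion step---what you flagged as step (4)---has a genuine gap.

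The problem is the placement of $x'$. You set $x' = b(t_m + \tfrac12)$, immediately past the \emph{target} curtain $h_m$, and then from $\hat d(x',y') \le C_1$ you claim $a$ must cross $h_m$. But this does not follow: a point $y' \in a$ sitting just on the $\ob$-side of $h_m$ (say in $h_{m-1}^+ \cap h_m^-$) is separated from $x'$ by only the single curtain $h_m$ from your $\K$-chain, so $\hat d(x',y')$ can be small even though $a$ never crosses $h_m$. Your appeal to ``Lemma \ref{unparametized quasi geos}-style estimates together with Lemma \ref{closeness}'' does not repair this: Lemma \ref{closeness} is a CAT(0)-metric statement that already presupposes $a$ crosses three consecutive $h_i$, which is exactly what you are trying to establish. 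The phrase ``after enlarging $m$ at the outset'' is also problematic, since $m$ indexes the curtain handed to you by the given open set and cannot be freely enlarged after the fact; and even if you meant to pass to some $h_{m'}$ with $m' \gg m$, you still set $x'$ just past $h_{m'}$, so the same objection applies with $m'$ in place of $m$.

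The paper's fix is to decouple the reference point from the target curtain. Given the target curtain $k$, one places the reference point $b(t_r)$ \emph{much farther} along $b$---far enough that every point on the $\ob$-side of $k$ has $\hat d$-distance $> r$ from $b(t_r)$. This is possible because the tail of the $\K$-chain $\{h_j\}_{j\ge J}$ lies past $k$ and separates all of $k^-$ from $b(t_r)$, so the same count as in Lemma \ref{unbnd} drives $\hat d(k^-, b(t_r))$ to infinity. Then Lemma \ref{A.11} gives $\hat d(b(t_r), a(m)) \le 2C + 2\delta$ for a suitable $a(m)$, and choosing $r > 2C + 2\delta$ forces $a(m) \in k^+$, hence $a^\infty \in U_k(b^\infty)$. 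The correct quantifier order is: fix $k$; then choose $r$ large (which determines $t_r$ and $R = \hat d(\ob, b(t_r))$); then take $V = U(s,R) \cap \varphi(\p_\K X)$ and apply Lemma \ref{A.11} to any $a^\infty$ with $\varphi(a^\infty) \in V$.
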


\begin{proof}
    Continuity is already shown in Theorem \ref{Continuous}. We only need to show that, for any open set  $O \subset \p_\K X$, $\varphi(O)$ is open in $\varphi(\p_\K X)$ when endowed with the subspace topology.  Let $b^\infty \in O$. By the definition of being open in the curtain topology, there exists a curtain $k$ dual to $b$ such that $U_k(b^\infty) \subset O$. We have $b$ is $\K$-contracting, so there exists a $\K$-chain $\{h_i\}$ dual to $b$ with centers of poles $b(t_i)$ for each $h_i$. Since $k$ is dual to $b$, we have that there exists a $J$ such that $k$ separates $h_j$ from $\ob$ for all $j \geq J$.

    Let $r >0$. Choose $t_r$ such that for all $y \in X$ on the same side of $k$ as $\ob$, $\hat{d}(y, b(t_r)) > r$. Note, this is possible since, for large enough $t$, $b(t)$ will be on the side of $h_J$ opposite of $\ob$. Since $b([J, \infty))$ is unbounded, such a $b(t_r)$ will exist. Put $R = \hat{d}(\ob,b(t_r))$. Set $V = U(\varphi(b^\infty),R) \cap \varphi(\p_\K X)$. We claim $\varphi(b^\infty) \in V \subseteq \varphi(U_k(b^\infty)) \subseteq \varphi(O)$. 

    Indeed, let $\eta \in V$, and put $a$ as the geodesic emanating from $\ob$ such that $\varphi(a^\infty) = \eta$. So $[\{a(n)\}] = \eta$. Let $m\in \mathbb{N}$ be the smallest $m$ such that $\hat{d}(\ob, a(m)) \geq R$ for $a(m) \in \{a(n)\}$. Since $a$ is a $C$-quasi-ruler, we have that there exists a $C$ such that $\hat{d}(\ob, a(m)) \leq R + C$. By Lemma \ref{A.11}, we get that $$\hat{d}(b(t_r), a(m)) \leq C + 2\delta +|\hat{d}(\ob, b(t_r))-\hat{d}(\ob, a(m))|\leq 2C + 2\delta,$$ where $\delta$ is the hyperbolicity constant of $\widehat{X}$. Since $r$ was arbitrary, we can choose $r$ large enough to force $a(m)$ to be on the same side of $k$ as $b(t_r)$. This means $a$ crosses $k$. In other words, $a^\infty \in U_k(b)$. This completes the proof.
\end{proof}

  \section{ A Persistent Shadow Characterization}\label{Section 5}
We now give our second characterization of $\K$-Morse rays (Theorem \hyperlink{TheoremD}{D} in the introduction). Just as how $\K$-Morse rays in mapping class groups project to curve graphs in a sublinearly scaled way, we find an equivalent notion in the CAT(0) setting.

  \begin{definition}[$\K$-persistent shadow, persistent shadow constant]
       A geodesic ray $b$ with infinite diameter has a \textit{$\K$-persistent shadow} in $\widehat X$ if there exists a $C>0$ such that for all $s<t$, \begin{center}
    $\hat d(b(s),b(t)) \geq C \cdot \displaystyle \frac{t-s}{\K(t)} - C.$
\end{center}
We refer to $C$ above as the \textit{persistent shadow constant.}
  \end{definition}

We show how this characterization is connected to $\kappa$-contracting in the next Theorem \ref{persistent characterization}. The following lemma works in dualizing $L$-chains that meet a geodesic, and is used in the proof of Theorem \ref{persistent characterization}.

\begin{lemma}[Lemma 2.21 in \cite{PSZ22}] \label{dualizing}
     Let $L, n \in \mathbb{N}$, let $\left\{h_1, \ldots, h_{(4 L+10) n}\right\}$ be an $L$-chain, and suppose that $A, B \subset X$ are separated by every $h_i$. For any $x \in A$ and $y \in B$, the sets $A$ and $B$ are separated by an $L$-chain of length at least $n+1$ all of whose elements are dual to $[x, y]$ and separate $h_1$ from $h_{(4 L+10) n}$.
\end{lemma}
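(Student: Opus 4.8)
The plan is to push the geodesic $\gamma:=[x,y]$ through the long chain and replace each curtain it meets by a curtain dual to $\gamma$, retaining only one replacement out of every block of roughly $4L+10$ consecutive $h_i$'s; the block size is precisely what is needed so that $L$-separation survives the replacement. Orient the chain so that $A\subseteq h_i^-$ and $B\subseteq h_i^+$ for every $i$. Since $x\in A$ and $y\in B$, the geodesic $\gamma$ crosses every $h_i$; writing $h_i=h_{b_i,r_i}$, Lemma \ref{crossing} produces a parameter $s_i$ with $\pi_{b_i}(\gamma(s_i))$ equal to the centre of the pole of $h_i$, so that $\gamma(s_i)\in h_i$. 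The first thing I would record is that the crossing parameters respect the order of the chain, $s_1\le s_2\le\cdots\le s_{(4L+10)n}$: a geodesic traverses a chain monotonically, which one extracts from Lemma \ref{nobigons} applied to the subsegments of $\gamma$ determined by the $\gamma(s_i)$, together with the nesting $h_1^-\subseteq h_2^-\subseteq\cdots$ of the halfspaces.

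Next, select indices $i_0<i_1<\cdots<i_n$ inside $\{1,\dots,(4L+10)n\}$ with consecutive selected indices differing by about $4L+10$ (possible exactly because we began with $(4L+10)n$ curtains), and set $\hat h_k:=h_{\gamma,\,s_{i_k}}$, discarding the first and last few indices if necessary so that each $s_{i_k}$ lies in the interior of the domain of $\gamma$. I claim $\{\hat h_0,\dots,\hat h_n\}$ is the sought chain. For disjointness: the curtains of $\{h_i\}$ strictly between $h_{i_k}$ and $h_{i_{k+1}}$ form a chain of about $4L$ curtains each of which separates $\gamma(s_{i_k})$ from $\gamma(s_{i_{k+1}})$ (by the nesting of halfspaces), so $|s_{i_{k+1}}-s_{i_k}|$ is bounded below by roughly $4L$, in particular exceeds $1$, so the poles of $\hat h_k$ and $\hat h_{k+1}$ are disjoint; being dual to the single geodesic $\gamma$, the $\hat h_k$ then form a chain separating $x$ from $y$, and the same comparison shows they separate $h_1$ from $h_{(4L+10)n}$. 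That this chain separates all of $A$ from all of $B$ is the first delicate point: one wants $h_{i_k-\ell}^-\subseteq\hat h_k^-$ and $h_{i_k+\ell}^+\subseteq\hat h_k^+$ for a small constant $\ell$ (using that $\ell$ consecutive crossing parameters span more than $1$), so that $A\subseteq h_{i_k-\ell}^-\subseteq\hat h_k^-$ and symmetrically for $B$; this ``sandwiching'' of a dual curtain between two members of the original chain is where I would invoke the curtain-to-dual-curtain comparison lemmas of \cite{PSZ22} rather than argue from scratch.

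The crux, and the step I expect to be the main obstacle, is showing that consecutive $\hat h_k$ are genuinely $L$-separated, not merely $(L+\text{const})$-separated. Here I would again use the sandwiching: given a chain $c$ all of whose curtains are transverse to both $\hat h_k$ and $\hat h_{k+1}$, I would convert it into a chain of the same cardinality all of whose curtains are transverse to both $h_{i_k}$ and $h_{i_{k+1}}$, which are $L$-separated because $\{h_i\}$ is an $L$-chain, so $|c|\le L$. The conversion only has to discard curtains of $c$ that get trapped near $h_{i_k\pm\ell}$ and $h_{i_{k+1}\pm\ell}$, and bounding how many such curtains a chain can accumulate — using the backtracking bound of Lemma \ref{backtrack} and the gluing of Lemma \ref{gluingLchain} — is exactly what consumes the ``$4L+10$''. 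Verifying that the block size really does absorb this loss on both ends, and that the final count does not drop below $n+1$, is the only genuinely delicate computation; everything else is bookkeeping. An alternative organisation is induction on $n$: apply the statement with $n-1$ to $\{h_1,\dots,h_{(4L+10)(n-1)}\}$ and to the pair $x$, a point of $\gamma$ just beyond $h_{(4L+10)(n-1)}$, and then append a single dual curtain produced from the last block of $4L+10$ curtains — the same $L$-separation-transfer estimate is what makes the inductive step go through.
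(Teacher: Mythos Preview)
The paper does not prove this lemma; it is quoted verbatim as Lemma 2.21 of \cite{PSZ22} and used as a black box in the proof of Theorem \ref{persistent characterization}. There is therefore no ``paper's own proof'' to compare your proposal against.

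That said, your sketch is broadly aligned with how the argument in \cite{PSZ22} actually proceeds: replace each $h_i$ by a curtain dual to $\gamma$ at the crossing parameter, thin the resulting family to one curtain per block of size roughly $4L+10$, and argue that the block size absorbs the loss incurred when transferring $L$-separation from the original curtains to the dual ones. You have correctly identified the two genuinely nontrivial points --- the sandwiching $h_{i_k-\ell}^-\subseteq\hat h_k^-\subseteq h_{i_k+\ell}^-$ and the $L$-separation transfer --- and the tools you name (Lemma \ref{backtrack}, Lemma \ref{gluingLchain}) are the right ones. The one place your outline is vague is the precise accounting that shows $4L+10$ suffices rather than, say, $6L+20$; in \cite{PSZ22} this is handled by a direct bottleneck argument rather than the chain-conversion you describe, but either route can be made to work with care.
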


 \begin{theorem}
     
 \label{persistent characterization}
 Let $b$ be geodesic ray in a CAT(0) space $X$ emanating from $\ob$ with infinite diameter projection onto $\widehat{X}$.\begin{itemize}
     \item If $b$ is $\K$-contracting and $\K^4$ is sublinear, then $b$ has a $\K^4$-persistent shadow in the $\hd$ metric.
     \item If $b$ has a $\K$-persistent shadow in the $\hat{d}$ metric and $\K^2$ is sublinear, then $b$ is $\K^2$-contracting.
 \end{itemize}

 \end{theorem}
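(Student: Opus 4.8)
The plan is to prove each direction separately, both times leveraging Theorem \ref{curtainexcursion} to translate between $\K$-contracting and the combinatorial language of $\K$-chains.

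For the forward direction, recall that Remark \ref{d hat is faster than k} already extracted from the proof of Lemma \ref{unbnd} the key estimate: if $\{h_i\}$ is the $\K$-chain dual to $b$ with centers of poles $b(t_i)$ and excursion constant $C$, then for each $t_i$ there is an $m\in\mathbb{N}$ (namely $m=\lceil C\K(t_i)\rceil$) with
$$\frac{d_m(\ob,b(t_i))}{m^3}\geq \frac{t_i-t_1}{(C\K(t_i)+1)^4}-1.$$
I would first upgrade this to an estimate between arbitrary parameters $s<t$ rather than just $\ob$ and $b(t_i)$. Given $s<t$, choose indices so that $t_j$ is the largest center-of-pole parameter at most $s$ and $t_k$ the largest at most $t$; the curtains $h_{j+1},\dots,h_{k-1}$ all separate $b(s)$ from $b(t)$ and (as in Lemma \ref{unbnd}) are $m$-separated for $m=\lceil C\K(t_k)\rceil\leq \lceil C\K(t)\rceil$. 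Since consecutive $t_i$ differ by at most $C\K(t_{i+1})$, the number of such curtains is at least $(t-s)/(C\K(t)) - O(1)$, so $d_m(b(s),b(t))\gtrsim (t-s)/\K(t) - O(1)$. Dividing by $m^3\leq (C\K(t)+1)^3$ and bounding $\hat d(b(s),b(t))\geq d_m(b(s),b(t))/m^3$ yields
$$\hat d(b(s),b(t))\geq C'\cdot\frac{t-s}{\K(t)^4}-C' = C'\cdot\frac{t-s}{\K^4(t)}-C'$$
after absorbing constants, which is precisely a $\K^4$-persistent shadow; sublinearity of $\K^4$ is needed only to know $b$ has infinite diameter, which is hypothesized.

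For the reverse direction, suppose $b$ has a $\K$-persistent shadow with constant $C$, and $\K^2$ is sublinear. As announced in the outline, the strategy is to produce a $\K^2$-chain dual to $b$ and invoke Theorem \ref{curtainexcursion} (via Corollary \ref{dualizing excursion }). Subdivide $[0,\infty)$ into intervals $[s_n,s_{n+1}]$ with $s_{n+1}-s_n$ comparable to $\K^2(s_{n+1})$, chosen large enough (using the persistent-shadow inequality) that $\hat d(b(s_n),b(s_{n+1}))$ exceeds a fixed threshold $T$. Because $\hat d=\sum_L d_L/L^3$, a large value of $\hat d(b(s_n),b(s_{n+1}))$ forces some single $d_{L_0}(b(s_n),b(s_{n+1}))$ to be large for a bounded $L_0$ (the tail $\sum_{L>N}d_L/L^3 \leq (1+d(b(s_n),b(s_{n+1})))\sum_{L>N}L^{-3}$ is controlled, and within $L\leq N$ one term must carry a definite fraction of the mass); choosing $T$ appropriately guarantees $d_{L_0}(b(s_n),b(s_{n+1}))\geq 4L_0+10$ or so, hence an $L_0$-chain of that length separating $b(s_n)$ from $b(s_{n+1})$. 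Applying Lemma \ref{dualizing} with $n=1$ (or a small constant) dualizes this to an $L_0$-chain of length $\geq 2$, in particular at least three curtains, dual to $b$ and lying between $b(s_n)$ and $b(s_{n+1})$. Taking, say, the first curtain produced in each interval gives a chain $\{k_n\}$ dual to $b$, meeting it at parameters within distance $O(\K^2(s_{n+1}))$ of each other, with consecutive pairs $L$-separated for a uniform $L\leq N$; since $L\leq N \leq \text{const}\leq \text{const}\cdot\K^2$, this is a $\K^2$-chain. By Corollary \ref{dualizing excursion } (whose hypothesis is exactly "a $\K^2$-chain meeting $b$ with parameters spaced $\leq C\K^2$ apart"), $b$ is $\K^2$-curtain-excursion, hence $\K^2$-contracting by Theorem \ref{curtainexcursion}.

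The main obstacle is the bookkeeping in the reverse direction: passing from "$\hat d$ is large" to "some fixed-index $d_{L_0}$ is large enough to feed Lemma \ref{dualizing}" requires a quantitative pigeonhole on the series $\sum_L d_L/L^3$ that is uniform — the bound $N$ on $L_0$ must not depend on $n$, which is where one uses that within each interval $d(b(s_n),b(s_{n+1}))$ is comparable to $\K^2(s_{n+1})$ so that the "trivial" bound $d_L < 1+d$ already caps all the $d_L$ and forces the mass into small $L$. One must also check that the parameters $t_i$ at which the extracted curtains $k_n$ meet $b$ genuinely satisfy $t_{i+1}-t_i\leq C'\K^2(t_{i+1})$ after the reindexing — this follows from Lemma \ref{3.2} applied to the bound $s_{n+1}-s_n\leq C\K^2(s_{n+1})$, but it needs to be stated carefully since $\K^2$ plays the role of the sublinear function throughout. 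Everything else reduces to the already-established machinery.
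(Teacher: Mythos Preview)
Your forward direction is essentially the paper's argument: both of you localize the estimate from Lemma~\ref{unbnd} to an arbitrary subinterval $[s,t]$ by counting how many curtains of the $\K$-chain fall between $b(s)$ and $b(t)$, then divide by $m^3$ with $m=\lceil C\K(t)\rceil$. That part is fine.

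The reverse direction, however, has a genuine gap at exactly the point you flagged as ``the main obstacle.'' You assert that the pigeonhole can be arranged so that the index $L_0$ carrying the mass is \emph{uniformly bounded} independent of $n$. This is false. On an interval of length $s_{n+1}-s_n\sim\K^2(s_{n+1})$ the trivial bound gives $d_L\le 1+d(b(s_n),b(s_{n+1}))\sim\K^2(s_{n+1})$, so for any fixed $N$ the tail satisfies
\[
\sum_{L>N}\frac{d_L(b(s_n),b(s_{n+1}))}{L^3}\;\lesssim\;\K^2(s_{n+1})\sum_{L>N}\frac{1}{L^3}\;\sim\;\frac{\K^2(s_{n+1})}{N^2},
\]
while the persistent-shadow hypothesis only gives $\hat d(b(s_n),b(s_{n+1}))\gtrsim \K(s_{n+1})$. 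Since $\K^2(s_{n+1})/N^2\gg\K(s_{n+1})$ for large $n$, the tail eventually swamps $\hat d$, and nothing forces any $d_{L_0}$ with $L_0\le N$ to be large. Consequently your conclusion that ``consecutive pairs are $L$-separated for a uniform $L\le N$'' cannot be justified.

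The fix, which is what the paper does, is to let the cutoff \emph{depend on $n$}: split the sum at $N=\lceil\K(s_{n+1})\rceil$. Then the tail is $\lesssim\K(s_{n+1})$ with a constant strictly less than $1$ (using $\K^2/L^3\le\K/L^2$ for $L\ge\K$ and $\sum_{L\ge 2}L^{-2}\le 2/3$), so after choosing the interval length with a large enough multiplicative constant, the head $\sum_{L<\lceil\K(s_{n+1})\rceil}d_L/L^3$ is forced to exceed a large multiple of $\K(s_{n+1})$. A second pigeonhole then yields some $L\le\K(s_{n+1})$ with $d_L(b(s_n),b(s_{n+1}))\ge 8\K(s_{n+1})+21\ge 2(4L+10)$, which is enough to feed Lemma~\ref{dualizing} with $n=2$ and extract \emph{three} $L$-separated curtains dual to $b$ on that interval. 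The point is that these curtains are only $\K(s_{n+1})$-separated, not uniformly separated---but that is exactly what a $\K^2$-chain requires, since $\K(s_{n+1})\le\K^2(s_{n+1})$. Taking the first curtain of each triple then works for the reason you sketched (the remaining two in each triple sit between consecutive chosen curtains and certify their separation), and Theorem~\ref{curtainexcursion} finishes.
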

 \begin{proof}
For the forward direction, put $D>0$ as the excursion constant, and denote $\{b(t_i)\}$ as the center of poles of the dual curtains in the $\K$-chain $\{h_i\}$. We follow the same process as in Lemma \ref{unbnd}. That is, for any $s\leq t$, we have that there exists a maximal $j$ and minimal $i$ such that $t_j \leq s\leq t \leq t_i$ (if $s \leq t_1$, we choose $t_0 = 0$). Following the proof of Lemma \ref{unbnd} gives us $$i-j\geq \frac{t_i-t_j}{D\K(t_i)}.$$ Thus, since $\lceil D\K(t_i) \rceil = m$ for some $m \in \mathbb{N},$ we get that $d_m(b(s),b(t)) \geq i-j-2$ and \begin{align*}
    \frac{d_m(b(s),b(t))}{m^3} &\geq \frac{\frac{t_i-t_j}{D\K(t_i)} - 2}{m^3}\\
                                 &\geq \frac{t_i-t_j}{D\K(t_i)(D\K(t_i) +1)^3} - 2\\
                                 &\geq \frac{t-s}{D'\K(t)(D'\K(t) +1)^3} - 2.\\
\end{align*}

for some $D'>0$ dependent on $D$ by Lemma \ref{3.2}. Thus, $b$ has a  $\K^4$-persistent shadow.

For the reverse direction, put $C$ as the persistent shadow constant. Since $b$ has infinite diameter and $\K^2$ is sublinear, consider a sequence $\{t_i\}$ such that $t_{i+1} - t_i = \frac{D}{C^2}\K^2(t_{i+1})$ for some finite $D>0$. Note that, for any $i$, $\lceil \K(t_{i+1})\rceil \geq 2$, and $\sum_{L=2}^{\infty}\frac{1}{L^3} \leq \sum_{L=2}^{\infty}\frac{1}{L^2} \leq \frac{2}{3}$. Thus, 
     \begin{align*}
         \frac{D}{C}\K(t_{i+1}) - C &=  C \cdot \frac{t_{i+1}-t_i}{\K(t_{i+1})} - C\\
                                    &\leq \hat{d}(b(t_i), b(t_{i+1})) \\
                                    &= \displaystyle \sum_{L=1}^{\lceil \K(t_{i+1)}\rceil-1}\frac{d_L(b(t_i),b(t_{i+1}))}{L^3} +  \displaystyle \sum_{L=\lceil \K(t_{i+1})\rceil}^{\infty}\frac{d_L(b(t_i),b(t_{i+1}))}{L^3}\\
                                    &\leq \displaystyle \sum_{L=1}^{\lceil \K(t_{i+1})\rceil-1}\frac{d_L(b(t_i),b(t_{i+1}))}{L^3} +  \displaystyle \sum_{L=\lceil \K(t_{i+1})\rceil}^{\infty}\frac{d(b(t_i),b(t_{i+1})) + 1}{L^3}\\
                                    &= \displaystyle \sum_{L=1}^{\lceil \K(t_{i+1})\rceil-1}\frac{d_L(b(t_i),b(t_{i+1}))}{L^3} +  \displaystyle \sum_{L=\lceil \K(t_{i+1})\rceil}^{\infty}\frac{\frac{D}{C}\K^2(t_{i+1}) + 1}{L^3}\\
                                    &\leq \displaystyle \sum_{L=1}^{\lceil \K(t_{i+1})\rceil-1}\frac{d_L(b(t_i),b(t_{i+1}))}{L^3} +  \displaystyle \frac{D}{C}\K(t_{i+1})\cdot\left(\sum_{L=\lceil \K(t_{i+1})\rceil}^{\infty}\frac{1}{L^2}\right)  + 1\\
                                     &\leq \displaystyle \sum_{L=1}^{\lceil \K(t_{i+1})\rceil-1}\frac{d_L(b(t_i),b(t_{i+1}))}{L^3} +  \displaystyle \frac{D}{C}\K(t_{i+1})\cdot \frac{2}{3}  + 1.\\
     \end{align*}
     That is, \begin{align}
        \displaystyle \frac{1}{3C}D\K(t_{i+1}) - C -1 \leq \displaystyle \sum_{L=1}^{\lceil \K(t_{i+1})\rceil-1}\frac{d_L(b(t_i),b(t_{i+1}))}{L^3}. \label{inequality}
     \end{align}

     Since $D$ was arbitrary, we now choose $D=3003C$ so that the left hand side of (\ref{inequality}) is  greater than $1000\K(t_{i+1}).$ We claim that for some $L$ with $1 \leq L \leq \lceil \K(t_{i+1})\rceil-1$, we get $d_L(b(t_i), b(t_{i+1})) \geq 8\K(t_{i+1})+21$. Indeed, if not, then 
     
     \begin{align*}
        \displaystyle \sum_{L=1}^{\lceil \K(t_{i+1})\rceil-1}\frac{d_L(b(t_i),b(t_{i+1}))}{L^3} &\leq \displaystyle \sum_{L=1}^{\lceil \K(t_{i+1})\rceil-1}\frac{8\K(t_{i+1})+ 21}{L^3} \leq 2(8\K(t_{i+1})+ 21) \\
     \end{align*}
which contradicts (\ref{inequality}). By Lemma \ref{dualizing}, we get that there are three $L$-separated curtains dual to $[b(t_i), b(t_{i+1})]$. These curtains, by construction, are also $\K(t_{i+1})$ separated. This is true for all $i$. We choose $h_i$ to be the dual curtain of $[b(t_i), b(t_{i+1})]$ that is closest to $b(t_i)$, and put $s_i$ to be the center of the pole of $h_i$. Then, we see that each of the pairs $\{h_i, h_{i+1}\}$ are $\K(t_{i+1})$-separated since there are two $\K(t_{i+1})$-separated curtains that separate $h_i$ and $h_{i+1}$. See Figure \ref{fig:K chain}.
\begin{figure}[ht]
    \centering
    \begin{tikzpicture}[scale=1.3]    
    \draw [blue,thick, ->](1,-2) .. controls (5,-2) and (8,-2) .. (11,-2);
    \node [below, blue] at (11,-2) {$b$};







\fill[red, opacity=0.2] (1.95,-3) .. controls(2,-3)  and (2,-.5)   .. (1.95,-.5) -- (2.15,-.5).. controls (2.1,-.5)  and (2.1,-3) .. (2.15,-3) -- cycle;

\fill[red, opacity=0.2] (1.95+.6,-3) .. controls(2+.6,-3)  and (2+.6,-.5)   .. (1.95+.6,-.5) -- (2.15+.6,-.5).. controls (2.1+.6,-.5)  and (2.1+.6,-3) .. (2.15+.6,-3) -- cycle;

\fill[red, opacity=0.2] (1.95+1.2,-3) .. controls(2+1.2,-3)  and (2+1.2,-.5)   .. (1.95+1.2,-.5) -- (2.15+1.2,-.5).. controls (2.1+1.2,-.5)  and (2.1+1.2,-3) .. (2.15+1.2,-3) -- cycle;

\fill[red, opacity=0.2] (1.95+3,-3) .. controls(2+3,-3)  and (2+3,-.5)   .. (1.95+3,-.5) -- (2.15+3,-.5).. controls (2.1+3,-.5)  and (2.1+3,-3) .. (2.15+3,-3) -- cycle;

\fill[red, opacity=0.2] (1.95+3.6,-3) .. controls(2+3.6,-3)  and (2+3.6,-.5)   .. (1.95+3.6,-.5) -- (2.15+3.6,-.5).. controls (2.1+3.6,-.5)  and (2.1+3.6,-3) .. (2.15+3.6,-3) -- cycle;

\fill[red, opacity=0.2] (1.95+4.2,-3) .. controls(2+4.2,-3)  and (2+4.2,-.5)   .. (1.95+4.2,-.5) -- (2.15+4.2,-.5).. controls (2.1+4.2,-.5)  and (2.1+4.2,-3) .. (2.15+4.2,-3) -- cycle;

\fill[red, opacity=0.2] (1.95+6,-3) .. controls(2+6,-3)  and (2+6,-.5)   .. (1.95+6,-.5) -- (2.15+6,-.5).. controls (2.1+6,-.5)  and (2.1+6,-3) .. (2.15+6,-3) -- cycle;

\fill[red, opacity=0.2] (1.95+6.6,-3) .. controls(2+6.6,-3)  and (2+6.6,-.5)   .. (1.95+6.6,-.5) -- (2.15+6.6,-.5).. controls (2.1+6.6,-.5)  and (2.1+6.6,-3) .. (2.15+6.6,-3) -- cycle;

\fill[red, opacity=0.2] (1.95+7.2,-3) .. controls(2+7.2,-3)  and (2+7.2,-.5)   .. (1.95+7.2,-.5) -- (2.15+7.2,-.5).. controls (2.1+7.2,-.5)  and (2.1+7.2,-3) .. (2.15+7.2,-3) -- cycle;

    \node[below, red] at (2.05, -3) {$h_i$};

    \node[below, red] at (5.05, -3) {$h_{i+1}$};

    \node[below, red] at (8.05, -3) {$h_{i+2}$};

    \draw[fill] (1.5, -2) circle [radius=0.05];
    \node[below] at (1.5, -2) {$b(t_i)$};

    \draw[fill] (4, -2) circle [radius=0.05];
    \node[below] at (4, -2) {$b(t_{i+1})$};

    \draw[fill] (7.3, -2) circle [radius=0.05];
    \node[below] at (7.3, -2) {$b(t_{i+2})$};

    \draw[fill] (10.3, -2) circle [radius=0.05];
    \node[below] at (10.3, -2) {$b(t_{i+3})$};

    \draw [opacity=0.6, decorate,
    decoration = {brace}] (3.35,-3.07) -- (2.55,-3.07) ;

    \draw[opacity=0.6] (2.95,-3.1) to (2.95,-3.45);
    \node [opacity=0.6, below] at (2.95,-3.4) {$\kappa(t_{i+1})$-separated};

  \draw [opacity=0.6, decorate,
    decoration = {brace}] (3.35+3,-3.07) -- (2.55+3,-3.07) ;

    \draw[opacity=0.6] (2.95+3,-3.1) to (2.95+3,-3.45);
    \node [below,opacity=0.6] at (2.95+3,-3.4) {$\kappa(t_{i+2})$-separated};

      \draw [opacity=0.6,decorate,
    decoration = {brace}] (3.35+6,-3.07) -- (2.55+6,-3.07) ;

    \draw [opacity=0.6](2.95+6,-3.1) to (2.95+6,-3.45);
    \node [below, opacity=0.6] at (2.95+6,-3.4) {$\kappa(t_{i+3})$-separated};

    \end{tikzpicture}
    \caption{Creating a dual $\K$-chain to $b$. Since each pair $\{h_i, h_{i+1}\}$ will have two $\K(t_{i+1})$-separated curtains that separate the pair, we get that $h_i$ and $h_{i+1}$ will be $\K(t_{i+1})$-separated.}
    \label{fig:K chain}
\end{figure}
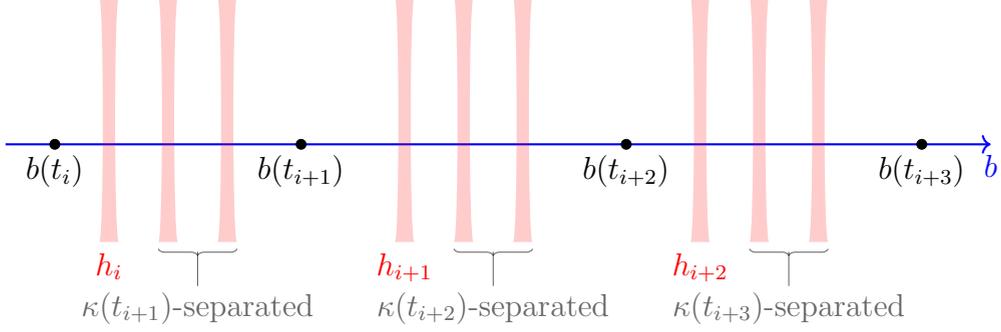
 Also,

\begin{align*}
    s_{i+1} - s_i &\leq t_{i+2} - t_i\\
                  &\leq 2\frac{D}{C^2}\K^2(t_{i+2})\\
                  &\leq D'\K^2(t_{i+1})\\
                  &\leq D'\K^2(t_{s_{i+1}})\\
\end{align*}
for some $D' > 0$ dependent on $C$ by Lemma \ref{3.2}. Thus, we have an infinite chain of dual curtains $\{h_i\}$ such that $h_i$ and $h_{i+1}$ are $\K(t_{i+1})$-separated (which implies $\K^2(t_{i+1})$-separated) and $t_{i+1} - t_i = \frac{D}{C^2}\K^2(t_{i+1})$  for all $i$. We conclude that $b$ is $\K^2$-contracting.
 \end{proof}

\color{black}

\section{ Hyperbolicity Criterion Using Curtain Grids}\label{Section 6}

Work of Genevois in \cite{Gen16} has shown a hyperbolicity criterion for CAT(0) cube complexes. We work to generalize this criterion to the CAT(0) setting.

\begin{definition}[Curtain Grid, $E$-thin]
    Two chains of curtains $\mathcal{H} = \{h_1, \cdots, h_n\}$ and $\mathcal{K}= \{k_1, \cdots, k_m\}$ form a \textit{curtain grid} if every curtain of $\mathcal{H}$ crosses every curtain of $\mathcal{K}$. We denote a curtain grid as $(\mathcal{H}, \mathcal{K})$. Given $E >0$, a curtain grid is said to be \textit{$E$-thin} if min$\{|\mathcal{H}|,|\mathcal{K}|\} \leq E$.
\end{definition}
Comparing to the cube complex setting, two chains of hyperplanes forming a grid will equate to a region of ``flatness" with the intuition of larger grids equating to larger areas of ``flatness". Thus, if one has an upper bound on how large these hyperplane grids can get, one could expect a notion of hyperbolicity. This is precisely what Genevois proves in \cite{Gen16}, and the following theorem uses curtains to get a similar criterion for the CAT(0) setting.
\begin{theorem}\label{grid theorem}
    Let $X$ be a CAT(0) space. Then $X$ is hyperbolic if and only if every curtain grid is $E$-thin for some uniform $E > 0$.
\end{theorem}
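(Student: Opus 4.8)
\textbf{Proof strategy for Theorem \ref{grid theorem}.}

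The plan is to prove both directions by showing that the existence of large curtain grids is equivalent to the failure of a uniform ``$L$-separation at bounded distance'' property for curtains dual to a common geodesic, and then bridging this to hyperbolicity via Theorem \ref{curtainexcursion} (with $\K \equiv 1$, where $\K$-contracting rays are exactly Morse rays and $\K$-curtain-excursion chains are honest $L$-chains with bounded gaps). First I would establish the key combinatorial bridge: if $(\mathcal{H}, \mathcal{K})$ is a curtain grid, then after replacing $\mathcal{H}$ by a subchain whose elements are mutually far apart (extracting one curtain every $\mathrm{min}\{|\mathcal{H}|,|\mathcal{K}|\}$ steps, say), the pairs of curtains in this thinned chain \emph{cannot} be $L$-separated for small $L$, because every curtain of $\mathcal{K}$ meets all of them. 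So a tall grid forces a long chain of mutually non-$L$-separated curtains, for $L$ roughly $\mathrm{min}\{|\mathcal{H}|,|\mathcal{K}|\}$.

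For the ``only if'' direction (hyperbolic $\Rightarrow$ every grid is $E$-thin), I would argue contrapositively: suppose curtain grids are unbounded, so for every $n$ there is a grid $(\mathcal{H}_n, \mathcal{K}_n)$ with $\mathrm{min}\{|\mathcal{H}_n|,|\mathcal{K}_n|\} \geq n$. Using the bridge above, I get curtains $k, k' \in \mathcal{K}_n$ crossing a long $\mathcal{H}_n$-chain, hence $k$ and $k'$ are at best $n$-separated — in fact the chain $\mathcal{H}_n$ witnesses that any common geodesic through $k^-$ and $k'^+$ has a chain of $\geq n$ curtains crossing it, all within a region that is ``thick'' in every $X_L$ for $L < n$. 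More precisely, I would build from these grids a sequence of geodesic segments on which the $d_L$-metrics agree with $d$ up to a bounded error for all $L \leq n$, forcing $\widehat{X}$ (or equivalently, forcing $X$ itself via the curtain-distance characterization of hyperbolicity in \cite{PSZ22}, Theorem 1.1 there) to fail $\delta$-hyperbolicity for any fixed $\delta$. The cleanest route is to invoke the result of \cite{PSZ22} that $X$ is hyperbolic if and only if there is a uniform $L$ with $d_L$ quasi-isometric to $d$; a tall grid directly obstructs this because curtains in a grid of height $n$ are pairwise non-$(n{-}1)$-separated, so $d_{n-1}$ cannot see them.

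For the ``if'' direction (every grid $E$-thin $\Rightarrow$ hyperbolic), I would show that the $E$-thinness hypothesis yields a uniform $L$ such that any two curtains dual to a common geodesic and sufficiently far apart along it are $L$-separated; this is exactly the Morse/contracting condition applied uniformly to \emph{every} geodesic, which by Theorem \ref{curtainexcursion} (case $\K \equiv 1$) makes every geodesic uniformly Morse, hence $X$ hyperbolic. Concretely: given a geodesic $b$ and curtains $h = h_{b,r}$, $h' = h_{b,r'}$ with $r' - r$ large, any chain $\mathcal{K}$ crossing both $h$ and $h'$ together with a long sub-chain of $\{h_{b,s} : r \leq s \leq r'\}$ would form a curtain grid (each $k \in \mathcal{K}$ crosses each $h_{b,s}$ since $k$ separates $b(r)$ from $b(r')$ and $h_{b,s}$ has pole between them — this needs Lemma \ref{crossing} and a disjointness check); by $E$-thinness, $|\mathcal{K}| \leq E$, so $h$ and $h'$ are $E$-separated. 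This gives the hyperbolicity criterion of \cite{PSZ22} directly.

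The main obstacle I expect is the verification that a chain $\mathcal{K}$ meeting two curtains $h_{b,r}, h_{b,r'}$ really does cross (in the curtain-grid sense, i.e.\ every element of $\mathcal{K}$ crosses every $h_{b,s}$) a long sub-chain of curtains dual to $b$ in between — curtains are not convex (see the Remark after Definition \ref{curtain}), so ``$k$ meets $h_{b,r}$ and $h_{b,r'}$'' does not immediately give ``$k$ meets $h_{b,s}$ for intermediate $s$.'' Handling this requires care with the pole geometry: one uses Lemma \ref{starconvexity} to connect points of $k \cap h_{b,r}$ and $k \cap h_{b,r'}$ to the respective poles inside $k$, producing a path in $k$ whose $\pi_b$-image spans $[b(r), b(r')]$, and then Lemma \ref{crossing} forces $k$ to meet every intermediate curtain; one must also arrange disjointness so that these intermediate curtains genuinely form a chain, which costs a bounded loss (discarding nearby curtains as in Proposition \ref{ContractingToExcursion}). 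Once this geometric lemma is in place, both directions reduce to the curtain-distance hyperbolicity criterion of \cite{PSZ22} and the $\K \equiv 1$ case of Theorem \ref{curtainexcursion}.
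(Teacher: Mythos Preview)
Your ``if'' direction (grids $E$-thin $\Rightarrow$ hyperbolic) matches the paper's argument: both show that two curtains dual to a common geodesic and far enough apart are $E$-separated, then conclude $d_E$ is quasi-isometric to $d$. The obstacle you flag is easier than you think: curtains dual to $b$ over non-overlapping intervals are genuinely disjoint (as preimages of disjoint sets under $\pi_b$) and each separates $X$, so any path-connected set (and $k$ is path-connected by Lemma~\ref{starconvexity}) meeting $h_{b,r}$ and $h_{b,r'}$ automatically meets every intermediate $h_{b,s}$. No pole geometry is needed.

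Your ``only if'' direction, however, has a gap. You argue that a grid of height $n$ makes the curtains of $\mathcal{H}$ pairwise not $(n{-}1)$-separated, and claim this obstructs $d_L$ from being quasi-isometric to $d$. But the failure of \emph{those particular} curtains to be $L$-separated does not preclude \emph{other} $L$-chains from separating points on either side of the grid; you have not produced points $x,y$ with $d(x,y)$ large and $d_L(x,y)$ bounded, which is what the quasi-isometry obstruction requires. The paper's argument is constructive and quite different. From hyperbolicity one extracts a uniform $L$ such that curtains dual to a common geodesic at distance $\geq L$ are $L$-separated (Theorem~\hyperlink{TheoremC}{C} with $\kappa\equiv 1$). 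Given a grid with $|\mathcal{H}| = n > 5L+8$, pick $x\in h_1^-$, $y\in h_n^+$, and build curtains $l_i$ dual to $[x,y]$ at points $a_i \in [x,y]\cap h_i^+\cap h_{i+1}^-$. The crucial trapping step: $h_1$ cannot meet $l_{2L+4}$, since $h_2,\dots,h_{L+2}$ already lie in $l_{L+3}^-$ and would otherwise give $L+1$ curtains crossing the $L$-separated pair $\{l_{L+3}, l_{2L+4}\}$. Symmetrically $h_n$ cannot meet $l_{n-(2L+3)}$. Thus $l_{2L+4}$ and $l_{n-(2L+3)}$ form an $L$-separated pair that genuinely \emph{separates} $h_1$ from $h_n$; every curtain of $\mathcal{K}$ meets both $h_1$ and $h_n$, hence crosses both of these $l$'s, forcing $|\mathcal{K}|\leq L$. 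Your contrapositive route never constructs such a separating $L$-pair, and that construction is the heart of the forward direction.
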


\begin{proof}
    If $X$ is a hyperbolic space, then all geodesics in $X$ are uniformly $D$-contracting for some constant $D$. By Theorem \hyperlink{TheoremC}{C} for $\kappa \equiv 1$ (this is equivalent to Theorem 4.2 in \cite{PSZ22}), we have there exists an $L=L(D)$ such for any two curtains $l_1,l_2$ dual to the same geodesic that are also of distance at least $L$ apart, we get that $l_1$ and $l_2$ are $L$-separated.
    
    Consider any curtain grid $(\mathcal{H}, \mathcal{K})$. So $\mathcal{H} = \{h_1, \cdots, h_n\}$ for some $n$. Suppose $n > 5L + 8$. Let $x \in h_1^-$ and $y \in h_n^+$ and denote the unique geodesic between $x$ and $y$ as $[x,y]$. Each $h_i$ crosses $[x,y]$ by nature of $\mathcal{H}$ being a chain. Since all of the curtains in $\mathcal{H}$ are disjoint, there exists $a_i \in [x,y]\cap h_i^+\cap h_{i+1}^-$ for all $i$. Denote the curtains dual to $[x,y]$ and centered at $a_i$ by $l_i$. We now consider the chain  $\mathcal{L} = \{l_{L+3}, l_{2L+4}, l_{n-(2L+3)}, l_{n-(L+2)}\}$. Each curtain in $\mathcal{L}$ is distance at least $L$ apart from the next curtain in $\mathcal{L}$. Thus, $\mathcal{L}$ is an $L$-chain. Notice the subchain of $\mathcal{H}$ that is $\{h_2, \cdots, h_{L+2}\}$ intersects nontrivially with $l_{L+3}^-$. Thus, $h_1$ cannot intersect $l_{2L+4}$ or else we would contradict $L$-separation between $l_{L+3}$ and $ l_{2L+4}$.  Similarly, $h_n$ cannot intersect $l_{n-(2L+3)}$. See Figure \ref{grid fig}. Thus, $l_{2L+4}$ and $l_{n- (2L+3)}$ are $L$-separated curtains that both separate $h_1$ and $h_n$. Since all curtains in $\mathcal{K}$ cross both $h_1$ and $h_n$, they must also cross both $l_{2L+4}$ and $l_{n- (2L+4)}$. The $L$-separability of $l_{2L+4}$ and $l_{n- (2L+3)}$ implies that $|\mathcal{K}| \leq L \leq 5L+8$. Hence, all curtain grids are uniformly $(5L+8)$-thin.

             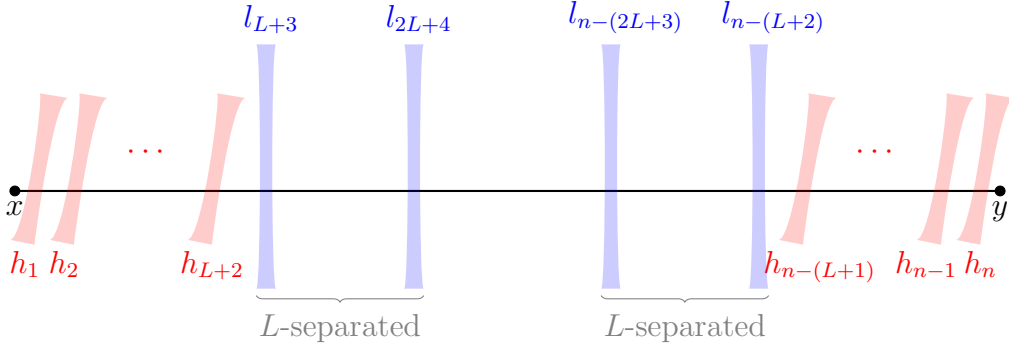
\begin{figure}[ht]
    \centering
    \begin{tikzpicture}[scale=1.3]    
    \draw [thick](1,-2) .. controls (5,-2) and (8,-2) .. (11,-2);
    \node [below] at (11,-2) {$y$};
    \node [below] at (1,-2) {$x$};
    \draw[fill] (11, -2) circle [radius=0.05];
    \draw[fill] (1, -2) circle [radius=0.05];


\fill[red, opacity=0.2] (1.95-.2-.8,-2.5) .. controls(2-.1-.8,-2.5)  and (2+.1-.8,-1)   .. (1.95+.1-.8,-1) -- (2.15+.2-.8,-1-.05).. controls (2.1+.1-.8,-1)  and (2.1-.1-.8,-2.5) .. (2.15-.15-.8,-2.5-.05) -- cycle;

\fill[red, opacity=0.2] (1.95-.2-.8+.4,-2.5) .. controls(2-.1-.8+.4,-2.5)  and (2+.1-.8+.4,-1)   .. (1.95+.1-.8+.4,-1) -- (2.15+.2-.8+.4,-1-.05).. controls (2.1+.1-.8+.4,-1)  and (2.1-.1-.8+.4,-2.5) .. (2.15-.15-.8+.4,-2.5-.05) -- cycle;

\fill[red, opacity=0.2] (1.95-.2+1,-2.5) .. controls(2-.1+1,-2.5)  and (2+.1+1,-1)   .. (1.95+.1+1,-1) -- (2.15+.2+1,-1-.05).. controls (2.1+.1+1,-1)  and (2.1-.1+1,-2.5) .. (2.15-.15+1,-2.5-.05) -- cycle;

   \node[below, red] at (1.1, -2.5) {$h_1$};
   \node[below, red] at (1.5, -2.5) {$h_2$};
   \node[below, red] at (3, -2.5) {$h_{L+2}$};

   \node[red] at (2.35,-1.6) {$\cdots$};

\fill[red, opacity=0.2] (1.95-.2+8.8,-2.5) .. controls(2-.1+8.8,-2.5)  and (2+.1+8.8,-1)   .. (1.95+.1+8.8,-1) -- (2.15+.2+8.8,-1-.05).. controls (2.1+.1+8.8,-1)  and (2.1-.1+8.8,-2.5) .. (2.15-.15+8.8,-2.5-.05) -- cycle;

\fill[red, opacity=0.2] (1.95-.2+8.4,-2.5) .. controls(2-.1+8.4,-2.5)  and (2+.1+8.4,-1)   .. (1.95+.1+8.4,-1) -- (2.15+.2+8.4,-1-.05).. controls (2.1+.1+8.4,-1)  and (2.1-.1+8.4,-2.5) .. (2.15-.15+8.4,-2.5-.05) -- cycle;

\fill[red, opacity=0.2] (1.95-.2+7,-2.5) .. controls(2-.1+7,-2.5)  and (2+.1+7,-1)   .. (1.95+.1+7,-1) -- (2.15+.2+7,-1-.05).. controls (2.1+.1+7,-1)  and (2.1-.1+7,-2.5) .. (2.15-.15+7,-2.5-.05) -- cycle;

 \node[below, red] at (9.17, -2.5) {$h_{n-(L+1)}$};
 \node[below, red] at (10.25, -2.5) {$h_{n-1}$};
 \node[below, red] at (10.8, -2.5) {$h_{n}$};

 \node[red] at (9.75,-1.6) {$\cdots$};

\fill[blue, opacity=0.2] (1.95+1.5,-3) .. controls(2+1.5,-3)  and (2+1.5,-.5)   .. (1.95+1.5,-.5) -- (2.15+1.5,-.5).. controls (2.1+1.5,-.5)  and (2.1+1.5,-3) .. (2.15+1.5,-3) -- cycle;

\fill[blue, opacity=0.2] (1.95+1+1.5+.5,-3) .. controls(2+1+1.5+.5,-3)  and (2+1+1.5+.5,-.5)   .. (1.95+1+1.5+.5,-.5) -- (2.15+1+1.5+.5,-.5).. controls (2.1+1+1.5+.5,-.5)  and (2.1+1+1.5+.5,-3) .. (2.15+1+1.5+.5,-3) -- cycle;

\fill[blue, opacity=0.2] (1.95+1+4,-3) .. controls(2+1+4,-3)  and (2+1+4,-.5)   .. (1.95+1+4,-.5) -- (2.15+1+4,-.5).. controls (2.1+1+4,-.5)  and (2.1+1+4,-3) .. (2.15+1+4,-3) -- cycle;

\fill[blue, opacity=0.2] (1.95+1+5.5,-3) .. controls(2+1+5.5,-3)  and (2+1+5.5,-.5)   .. (1.95+1+5.5,-.5) -- (2.15+1+5.5,-.5).. controls (2.1+1+5.5,-.5)  and (2.1+1+5.5,-3) .. (2.15+1+5.5,-3) -- cycle;

    \node[above, blue] at (3.6, -.5) {$l_{L+3}$};

    \node[above, blue] at (3.6+1.5, -.5) {$l_{2L+4}$};

    \node[above, blue] at (3.7+3.5, -.5) {$l_{n-(2L+3)}$};

    \node[above, blue] at (3.7+5, -.5) {$l_{n-(L+2)}$};

    \draw [gray, decorate,
    decoration = {brace}] (5.15,-3.07) -- (3.45,-3.07) ;

    \node [gray, below] at (4.3,-3.15) {$L$-separated};

    \draw [gray, decorate,
    decoration = {brace}] (5.15+3.5,-3.07) -- (3.45+3.5,-3.07) ;

    \node [gray, below] at (4.3+3.5,-3.15) {$L$-separated};

    \end{tikzpicture}
    \caption{Since $l_{L+3}$ and $l_{2L+4}$ are $L$-separated and $\{h_2, \cdots , h_{L+2}\}$ is a chain of length $L$, we must have that $h_1$ cannot meet $l_{2L+4}$. Similarly, $h_n$ cannot meet $l_{n-(2L+3)}$. }
    \label{grid fig}
\end{figure}

    For the reverse direction, let all curtain grids be $E$-thin for some uniform $E$. This means that any two curtains $h_1, h_j$ dual to the same geodesic that are also greater than $E$ distance away from each other must be $E$-separated. Indeed, such a situation would give a chain of $E+1$ curtains $\{h_1, h_2, \cdots, h_j\}$ all dual to the same geodesic, so any grid made with this chain and some other chain $\mathcal{K}$ must give $|\mathcal{K}| \leq E$. Consider any $x,y \in X$. Then, there exists an $n \in \mathbb{Z}_{\geq0}$ such that $$n(E+2) \leq d(x,y) \leq (n+1)(E+2).$$ Thus, there exist a chain $c = \{h_1, \cdots, h_{n(E+2)-1}\}$ such that each $h_i$ is dual to $[x,y]$. We can then conclude that $\{h_1, h_{(E+2)}, h_{2(E+2)}, \cdots, h_{(n-1)(E+2)}\}$ is an $E$-chain of length $n$. So $d_E(x,y) \geq n$. Hence, we get $$d_E(x,y) \leq d(x,y) \leq d_E(x,y)(E+2) +(E+2).$$ This gives that $X$ is quasi-isometric to $X_E$, a hyperbolic space. we conclude that $X$ is hyperbolic.
\end{proof}

\bibliography{Characterization.bib}{}
\bibliographystyle{alpha}

\end{document}